\theoremstyle{plain}
\newtheorem{theorem}{Theorem}[section]
\newtheorem{lemma}[theorem]{Lemma}
\newtheorem{assumption}[theorem]{Assumption}
\theoremstyle{definition}
\newtheorem{example}{Example}
\newtheorem{Remark}[theorem]{Remark}
\newcommand\bx{\boldsymbol{x}}
\newcommand\calP{{\mathcal{P}}}
\newcommand\calT{{\mathcal{T}}}
\newcommand{\mesh}{\calP}
\newcommand{\meshsh}{\calP^{\sharp}}
\newcommand{\mean}[1]{\{\kern-1.1mm\{#1\}\kern-1.1mm\}}          
\newcommand{\jump}[1]{[\![#1]\!]}                        
\newcommand{\ud}{\,\mathrm{d}}
\newcommand{\ndg}[1]{| \kern -.25mm \|{#1}| \kern -.25mm \|}
\newcommand{\ndgp}[1]{| \kern -.25mm \|{#1}| \kern -.25mm \|_{\rm \mesh}}
\newcommand{\ndgps}[1]{| \kern -.25mm \|{#1}| \kern -.25mm \|_{\rm \mesh,s}}
\newcommand{\diam}{\operatorname{diam}}
\newcommand{\norm}[2]{\|{#1}\|_{{#2}}}
\newcommand{\ncdg}[1]{| \kern -.25mm \|{#1}| \kern -.25mm \|_{\rm DG}}
\renewcommand{\k}{K}
\newcommand{\ksh}{{K^{\sharp}}}
\newcommand{\bnabla}{\nabla_{\kern-.05cm\mesh}^{}}
\title[Composite Nodally bound-preserving DG methods on polytopic meshes]{A nodally bound-preserving  composite discontinuous Galerkin method  on polytopic meshes}
\author[A.~Amiri]{Abdolreza  Amiri}
\address{Department of Mathematics and
    Statistics, University of Strathclyde, 26 Richmond Street,
    Glasgow, G1 1XH United Kingdom}
                    \email{abdolreza.amiri@strath.ac.uk}
\author[G.R.~Barrenechea]{Gabriel R. Barrenechea}
\address{Department of Mathematics and
    Statistics, University of Strathclyde, 26 Richmond Street,
    Glasgow, G1 1XH United Kingdom}
\email{gabriel.barrenechea@strath.ac.uk}
\author[E.H.~Georgoulis]{Emmanuil H.~Georgoulis}
\address{  The Maxwell Institute for Mathematical Sciences \& Department of Mathematics, \\ Heriot-Watt University, Edinburgh EH14 4AS, UK, and \\ Department of Mathematics, School of Applied Mathematical and Physical Sciences, \\ National
  Technical University of Athens, Zografou 15780, Greece, and \\
  IACM-FORTH, Greece}
\email{e.georgoulis@hw.ac.uk}
\author[T.~Pryer]{Tristan Pryer}
\address{Department of Mathematical Sciences, University of Bath, Claverton down,  Bath BA2 7AY, UK}
\email{tmp38@bath.ac.uk}
\begin{document}
\maketitle

\begin{abstract}
 We introduce a nodally
bound-preserving Galerkin method for second-order elliptic
problems on general polygonal/polyhedral, henceforth collectively termed as \emph{polytopic}, meshes.  Starting
from an interior penalty discontinuous Galerkin (DG) formulation posed on a
polytopic mesh, the method enforces preservation of \emph{a priori} prescribed upper and lower bounds for the numerical solution at arbitrary number of user-defined points \emph{within} each polytopic element. This is achieved by employing a simplicial submesh and enforce bound preservation at the submesh nodes via a nonlinear iteration. By construction, the submeshing procedure preserves the order of accuracy of the DG method, \emph{without} introducing any additional global numerical degrees of freedom compared to the baseline DG method, thereby, falling into the category of composite finite element approaches. A salient feature of the proposed method is that it automatically reverts to the standard DG method on polytopic meshes when no prescribed bound violation occurs. In particular, the choice of the discontinuity-penalisation parameter is independent of the submesh granularity. The resulting composite method combines the geometric
flexibility of polytopic meshes with the accuracy and stability of
discontinuous Galerkin discretisations, while rigorously guaranteeing
bound preservation.  The existence and uniqueness of the numerical solution is proven. A priori error bounds, assuming sufficient regularity of the exact solution are shown, employing a non-standard construction of discrete nodally bound-preserving interpolant. Numerical experiments confirm optimal convergence for
smooth problems and demonstrate robustness in the presence of sharp gradients, such as boundary
and interior layers.
\end{abstract}

\section{Introduction}
\label{sec:intro}

High-order discontinuous Galerkin (DG) methods are attractive for
approximating partial differential equations because they combine
local conservation, geometric flexibility, and ease of
adaptivity. However, classical DG discretisations typically violate the
discrete maximum principle and may produce large overshoot or undershoots, resulting in non-physical approximations, e.g., not respect positivity properties of the exact solution. Such violation of the expected range/bounds of the exact solution by its numerical approximation may introduce stability challenges in the presence of nonlinearities or coupled systems.

For example, consider a model boundary-value problem involving a reaction--diffusion partial differential equation (PDE) on some computational domain $\Omega$, and suppose its exact solution satisfies the bounds
\[
   0 \leq u(x) \leq \kappa \quad \text{for almost every } x \in \Omega ,
\]
with $\kappa > 0$, under suitable assumptions, e.g., on the forcing term and
boundary conditions \cite{renardy2006introduction}. Numerical approximations $u_h$ of the exact solution $u$ by classical
finite element/discontinuous Galerkin methods may not be bound preserving, in
general: even if  $u(x)\in[0,\kappa]$, a.e.~$x \in \Omega$,  the set $\{x
\in \Omega : u_h(x) \notin [0,\kappa]\}$ may have positive
measure. Ciarlet and Raviart shown already in 1973  that the standard conforming finite element
method on simplicial meshes requires stringent mesh conditions to
guarantee a bound-preserving approximation \cite{CR73}.

Since negative densities and pressures in Euler's equations, or
probabilities, concentrations, and phase-field solutions lying outside
$[0,1]$, are physically meaningless, positivity-preserving, or more
generally bound-preserving, discretisations have received increasing
attention over the last few decades.  Approaches include
\cite{XZ99,BE05,DDS05,lipnikov2007monotone,Kuz07,EHS09,LMS11,
  BBK17-NumMath,vdVXX19,ChengShen22,HuanChen23,Kirby4755186} and the
recent monographs \cite{Kuzmin-book} for hyperbolic and
\cite{BJK-book} for elliptic second-order partial differential
equations. A common aspect of most of these methods is that achieving
bound preservation requires nonlinear approaches, either through
limiters or by reformulating the problem as a variational inequality.

Polytopic meshes provide several advantages over simplicial or
quadrilateral grids. Their ability to conform to complicated
geometrical features without excessive mesh granularity \cite{DG-EASE}, in conjunction with allowing local mesh coarsening via element agglomeration and, in general, their flexibility in dynamic mesh modification scenarios, make them attractive
for adaptive simulations \cite{cangiani2017hp, cangiani_VEM2017, cangianiVEMpar2021, cangiani2023}. Polytopic methods also admit efficient sweeping strategies for transport-type problems \cite{calloo2025cycle}. By
contrast, literature on positivity-preserving, or in general bound-preserving, discretisations for
polytopic meshes is scarce and usually restricted to low order. For
instance, \cite{ST22-VEM,SYG23-VEM} propose hybrid (nonlinear) finite
volume/Virtual Element Methods (VEMs) that are provably positivity-preserving in the low-order setting. In the finite volume
context, there is also a large body of work on positivity-preserving
low-order methods; see \cite{Droniou14} for a review. In the finite
element setting, the only FEM-based scheme we are aware of is
\cite{FRK20}, whereby a Flux-Corrected Transport (FCT) scheme based on a
DG formulation on polytopic meshes is introduced and shown to be
bound-preserving for element mean values (although numerical
experiments are carried out only on simplicial meshes).

The main goal of this work is to design and analyse an arbitrary order, provably
bound-preserving DG method for elliptic problems on polytopic
meshes. Given the allowed element shape generality, the method preserves predetermined bounds on \emph{user-defined} points within each element.  This is a crucial development in our view, as in many applications, positivity/bound preservation is  crucial within coupled multi-physics simulations where the
solution of a PDE, for instance a density, is passed as input into other model
components. In such settings, it is not necessary for the solution to
be bound-preserving everywhere; it is sufficient to ensure preservation only at the
\emph{quadrature points} used for the discretisation of other equations in the model.


The proposed approach starts from the polytopic interior penalty DG
method analysed in \cite{cangiani2014hp, cangiani2017hp} serving as the baseline discretisation. A crucial aspect for the success of polytopic DG in admitting elements of extreme shape generality is the careful choice of the discontinuity-penalisation parameter \cite{cangiani2014hp, cangiani2017hp}. Too small penalisation hinders stability. On the other hand, selecting the parameter much larger than needed for stability may restrict approximation quality, when no conforming subspace is available. Then, building on the nodally bound-preserving (NBP) conforming finite element method on simplicial meshes of \cite{BGPV24}, we consider the closed convex set of appropriate discrete functions satisfying the physical bounds at their degrees of
freedom, and define a nonlinear projection onto this set. A discrete problem is
then formulated for the projected object, and a stabilisation term is
added to remove the non-trivial kernel induced by the projection.  The kernel stabilisation is carefully constructed to satisfy a key monotonicity property (see Lemma \ref{l31} below), which allows for well-posedness of the discrete solution. This
strategy was first proposed for (linear
and nonlinear)  reaction--diffusion equations  \cite{BGPV24}, and has since been extended to convection--diffusion
equations with conforming finite elements \cite{ABP24}, DG methods
\cite{barrenechea2025nodally}, densities in proton therapy
\cite{ashby2025positivity}, and Allen--Cahn problems \cite{DEW25},
among others.

To ensure maximum control of the points where bound-preservation is enforced, the method employs simplicial submeshes of the polytopic elements and requires that bounds are preserved at the vertices of each submesh. The choice of submeshes can be freely selected, e.g., to coincide with quadrature nodes, for instance. By construction, the submeshing procedure preserves the order of accuracy of the DG method, \emph{without} introducing any additional global numerical degrees of freedom compared to the baseline polytopic DG method. A critical property of the proposed method is that the choice of the discontinuity-penalisation parameter remains subordinate to polytopic mesh only; in particular, it is independent of the submesh granularity. The aspect ratio between the ``coarse'' polytopic meshsize and the ``fine'' submesh size only appears in the definition of the kernel stabilisation term, and has no effect on the stability or approximation properties.  The resulting method is
regarded as \emph{composite} in the spirit of
\cite{HS97-Composite,HS97-Composite-II}, due to the presence of the submesh without proliferation of global degrees of freedom.

The well-posedness of the numerical solution is proven via the Brower-Minty Theorem, upon resorting to the key monotonicity property of the kernel stabilisation. Moreover, \emph{a priori} error bounds, assuming sufficient regularity of the exact solution are shown. In contrast to the case of NBP methods on simplicial meshes \cite{BGPV24, ABP24,DEW25}, whereby the standard interpolants are used to carry out the error analysis, the presence of polytopic meshes and the desire to enforce bound preservation at user-defined nodes poses a new technical challenge: standard 
(quasy-)interpolants are no longer applicable. To address this 
we employ a non-standard construction, inspired from \cite{mosco1974one,strang2006dimension,Kirby4755186}, which results in a NBP interpolant with optimal approximation properties. Optimal \emph{a priori} error bounds are, thus, proven for sufficiently smooth solutions. Moreover, we detail the practical construction of the method on the matrix level to showcase that it is possible to reuse existing implementations, thus preserving conservation and consistency properties of the original method. Finally, a comprehensive series of numerical experiments confirms the optimal convergence for
smooth problems, demonstrate robustness in the presence of sharp gradients, such as boundary
and interior layers, highlighting further the practicality of the proposed method.



To the best of our knowledge, the method presented below is the first rigorous, arbitrary order
positivity/bound-preserving Galerkin for polytopic
meshes.  We remark that the construction is new even when restricting to simplicial
meshes: submeshes with vertices positioned at the quadrature nodes of the baseline mesh can be used to able to enforce physical consistency in coupled problems  scenarios. The method is also related to the Recovered
Element methodology \cite{georgoulis2018recovered,dong2020recovered},
which also uses reconstruction and projection techniques to
enhance stability and approximation on general meshes.  


The remainder of this manuscript is organised as
follows. Section~\ref{sec:pre} presents the model problem, preliminary
results, and the baseline IPDG
discretisation. Section~\ref{Sec:NBP-poly} introduces the polygonal
NBP method and its well-posedness. Section~\ref{Sec:Interpol}
constructs an interpolation operator that respects the bounds in the
polytopic mesh, which is instrumental for the error analysis carried
out in Section~\ref{sec:Error}. Implementation details are given in
Section~\ref{Sec:Implementation}, followed by numerical experiments in
Section~\ref{sec:numerics} that demonstrate the performance of the
method and validate the theory. Conclusions are drawn in
Section~\ref{sec:conclusion}.

\section{Model problem and its discretisation by the discontinuous Galerkin method}
\label{sec:pre}	

We adopt standard notation for Sobolev spaces, following
\cite{EG21-I}. For a domain $D \subseteq \mathbb{R}^d$, we denote by
$\|\cdot\|_{0,p,D}$ the $L^p(D)$-norm, omitting the subscript $p$ when
$p=2$. For $s \geq 0$ and $p \in [1,\infty]$, we write
$\|\cdot\|_{s,p,D}$ (resp.\ $|\cdot|_{s,p,D}$) for the norm (resp.\
seminorm) in $W^{s,p}(D)$, and again drop the subscript $p$ when
$p=2$. We set
\[
H^1_0(D) = \{ v \in H^1(D) : v=0 \text{ on } \partial D \},
\]
and denote by $H^{-1}(D)$ the dual of $H^1_0(D)$, identifying $L^2(D)$
with its dual. Writing $\langle \cdot,\cdot \rangle_D$ for the duality
pairing, we have
\[
\langle f,v \rangle_D = \int_D f(\boldsymbol{x}) v(\boldsymbol{x})
 \mathrm{d}\boldsymbol{x}, \qquad v \in H^1_0(D),
\]
whenever $f \in H^{-1}(D)$ is sufficiently regular. In the sequel we
do not distinguish between inner products and duality pairings for
scalar- or vector-valued functions.

\subsection{Model problem} 

Let $\Omega \subset \mathbb{R}^d$ ($d=2,3$) be an open, bounded
polygonal or polyhedral domain with boundary $\partial\Omega$. For
given $f \in H^{-1}(\Omega)$, we consider the elliptic problem
\begin{equation}\label{CDR}
  \begin{aligned}
    - \operatorname{div}(\mathcal{D}\nabla u) + \mu u &= f && \text{in }\Omega, \\
    u &= 0 && \text{on }\partial\Omega,
  \end{aligned}
\end{equation}
where $\mu \in L^{\infty}(\Omega)$ with $\mu \ge 0$ a.e.\ in $\Omega$,
and $\mathcal{D} = (d_{ij})_{i,j=1}^d \in [L^{\infty}(\Omega)]^{d
  \times d}$ is symmetric and uniformly strictly positive
definite. That is, there exists a constant $\mathcal{D}_0 > 0$ such
that
\[
\boldsymbol{y}^\top \mathcal{D}(\boldsymbol{x})  \boldsymbol{y}  \ge  \mathcal{D}_0   \boldsymbol{y}^\top \boldsymbol{y}, 
\qquad \forall  \boldsymbol{y} \in \mathbb{R}^d \setminus \{0\}, \quad \text{for a.e.\ } \boldsymbol{x} \in \Omega.
\]
It follows from the Lax--Milgram lemma that problem \eqref{CDR} admits
a unique weak solution.  The analysis presented below extends to more
general settings, such as mixed boundary conditions and first-order
perturbation terms, but for clarity we restrict attention to the model
case \eqref{CDR}.

We seek discrete solutions that preserve the bounds satisfied by the
weak solution of \eqref{CDR}.  By the maximum and comparison
principles (see, e.g., \cite[Corollary~4.4]{renardy2006introduction}),
for almost all $\boldsymbol{x} \in \Omega$ the solution $u$ satisfies
\begin{equation}
-\|f\|_{0,\infty,\Omega} \mu^{-1}  \le  u(\boldsymbol{x})  \le  \|f\|_{0,\infty,\Omega} \mu^{-1}.
\end{equation}
If, in addition, $f \ge 0$ in $\Omega$, this bound sharpens to
\begin{equation}
0  \le  u(\boldsymbol{x})  \le  \|f\|_{0,\infty,\Omega} \mu^{-1}.
\end{equation}
Motivated by these bounds, we impose the following assumption on the
exact solution throughout this work.
\begin{assumption}[Bounded solution assumption]
The weak solution of \eqref{CDR} is assumed to satisfy
\begin{equation}\label{eq14}
0  \leq  u(\boldsymbol{x})  \leq  \kappa, 
\qquad \text{for almost all } \boldsymbol{x} \in \Omega,
\end{equation}
where $\kappa > 0$ is a given constant.
\end{assumption}

\subsection{Admissible partitions and finite element spaces} \label{sec2.2}

Let $\calP$ be a subdivision of $\Omega$ into disjoint polygonal
elements when $d=2$, or polyhedral elements when $d=3$. We will
collectively refer to these as polytopic elements. For a nonnegative
integer $k$, we denote the set of all polynomials of degree $k$ on $K
\in \calP$ by $\mathbb{P}_{k}(K)$. For $k \geq 1$, we consider the
elementwise discontinuous space
\begin{align}
  \label{eq:dgspace}
  V_\calP := \{ v_H \in L^{2}(\Omega) : v_H|_{K} \in \mathbb{P}_{k}(K) \ \ \forall K \in \calP \}.
\end{align}
We denote by $\Gamma_\calP := \bigcup_{K\in\calP}\partial K$ the
skeleton of $\calP$, and by $\Gamma_\calP^{\rm int} := \Gamma_\calP
\setminus \partial \Omega$ its interior part. The mesh function
$H_\calP:\bar{\Omega}\to\mathbb{R}_+$ is defined by
\[
H_\calP|_{\mathring{K}} := \mathrm{diam}(K) \qquad \forall K \in \calP.
\]

We assume that each element $K \in \calP$ can be subdivided into a
finite collection of simplices, denoted $\calT_K$, such that $K =
\bigcup_{T\in\calT_K} T$, and denote $\calT = \bigcup_{K\in \calP}
\calT_K$. No global conformity is required: the local triangulations
$\calT_K$ may be chosen independently. On each $K\in\calP$, we define
the local continuous piecewise polynomial space
\begin{align}
  W_{K} := \{ v_h \in C^{0}(K) : v_h|_{T} \in \mathbb{P}_{k}(T) \ \ \forall T\in \calT_{K} \},
\end{align}
and set $W_\calT := \bigoplus_{K\in\calP} W_{K}$.

A key feature of the proposed method is that the approximate solution
may be piecewise polynomial within each polytopic element $K \in
\calP$ in order to enforce bound preservation. Crucially, however, the
number of global degrees of freedom of the method coincides with $\dim
V_\calP$ rather than with the generally larger space $W_\calT$.
  
We now present assumptions on the admissible partitions and introduce some basic notation.

\begin{assumption}[Admissible polytopic meshes]\label{mesh_assumption}
  Let $({\mesh_i})_{i\in I}$ be a family of polytopic meshes. We assume:
  \begin{itemize}
  \item[(a)] Each element $K \in \mesh_i$ is star-shaped with respect
    to a ball of radius $\rho_K$, and there exists a global constant
    $C_{\rm star}>0$ such that
  \[
  H_K \le C_{\rm star} \rho_K \qquad \forall K \in \mesh_i, \ \forall i \in I.
  \]
\item[(b)] The family $({\mesh_i})_{i\in I}$ is locally quasi-uniform,
  i.e., there exists a constant $C_{\rm qu} \ge 1$ such that
  \[
  H_{K'} \le C_{\rm qu} H_K
  \]
  for all face neighbours $K'$ of each $K \in \mesh_i$, for all $i \in
  I$.
\end{itemize}
\end{assumption}

Note that Assumption~\ref{mesh_assumption}(b) implies the equivalence
\[
C_{\rm qu}^{-1} H_K  \le  H_{K'}  \le  C_{\rm qu} H_K
\]
for all face neighbours $K'$ of a polytopic element $K \in \mesh$.

The well-posedness and error analysis of the nodally bound-preserving
method also requires the following mild technical assumptions on the
polytopic partition $\calP$ and its sub-triangulation $\calT$.

\begin{assumption}[Admissible simplicial submeshes]\label{mesh_assumption_two}
  Let $({\calT_j})_{j\in J}$ be a family of simplicial submeshes
  obtained as refinements of a given polytopic mesh $\calP$. We
  assume:
\begin{itemize}
\item[(a)] The family $({\calT_j})_{j\in J}$ is shape-regular, i.e.,
  there exists a global constant $C_{\rm sh}>0$ such that
  \[
  h_T \le C_{\rm sh} \rho_T \qquad \forall T \in \calT_j, \ \forall j \in J,
  \]
  where $\rho_T$ is the inradius of $T$.
  \item[(b)] Let $K \in \calP$ and $T \in \calT_K$ be such that $T$
    has a facet $f \subset \partial K$. If $K' \in \calP$ is the
    neighbouring element with $f \subset K \cap K'$, then
  \[
  h_T \le H_{K'}.
  \]
\end{itemize}
\end{assumption}
Assumption~\ref{mesh_assumption_two}(a) is standard. Part (b) is a
mild technical condition requiring that the submesh of a polytopic
element is not coarser than its neighbouring polytopic elements.
 
We recall some standard results used throughout this work. Let $T
\subseteq \mathbb{R}^d$ be a simplex and $f$ one of its facets. The
following trace-inverse inequality holds (see \cite{war03}, and
\cite{DG-EASE} for extensions):
\begin{equation}\label{Trace-inverse}
  \|v_h\|_{0,f}^2  \le  \frac{(k+1)(k+d)|f|}{d|T|} \|v_h\|_{0,T}^2,
  \qquad \forall v_h \in \mathbb{P}_k(T).
\end{equation}
In addition, for every $v \in H^1(T)$ the local trace inequality holds
(see, e.g., \cite{EG21-I}):
\begin{equation}\label{local_trace_theorem}
  \|v\|_{0,f}^2
   \le  \|v\|_{0,T} |v|_{1,T} + \frac{|f|}{|T|} \|v\|_{0,T}^2.
\end{equation}
We also recall the standard inverse inequality (see, e.g.,
\cite{Sch98}): there exists a constant $C_{\rm inv}>0$, independent of
$T$, $k$, $v_h$, and $\rho_T$, such that
\begin{equation}\label{Standard-Inverse-Ineq}
  \|\nabla v_h\|_{0,T}^2  \le  C_{\rm inv} k^4 \rho_T^{-2} \|v_h\|_{0,T}^2,
  \qquad \forall v_h \in \mathbb{P}_k(T).
\end{equation}
For any element $K \in \calP$, we define its neighbourhood by
\[
  \omega_K := \{ K' \in \calP : K' \text{ shares a facet with } K \}.
\]
 
\subsection{Interior penalty discontinuous Galerkin method}

Let $K_+$ and $K_-$ be two adjacent elements of $\calP$ sharing an
interface $F \subset \partial K_+ \cap \partial K_- \subset
\Gamma_{\calP}^{\rm int}$.  For elementwise continuous scalar- and
vector-valued functions $v$ and $\mathbf{q}$, we define the
\emph{average} across $F$ by
\[
 \mean{v}|_F := \tfrac{1}{2}(v_+|_F + v_-|_F), 
 \qquad 
 \mean{\mathbf{q}}|_F := \tfrac{1}{2}(\mathbf{q}_+|_F + \mathbf{q}_-|_F),
\]
where $v_\pm|_F$ denotes the trace of $v$ from within $K_\pm$ on $F$,
and similarly for $\mathbf{q}$. The \emph{jump} across $F$ is defined
by
\[
 \jump{v}|_F := v_+ \mathbf{n}_+ + v_- \mathbf{n}_-,
 \qquad 
 \jump{\mathbf{q}}|_F := \mathbf{q}_+ \cdot \mathbf{n}_+ + \mathbf{q}_- \cdot \mathbf{n}_-,
\]
with $\mathbf{n}_\pm$ the outward unit normal to $K_\pm$ on $F$. On a
boundary face $F \subset \partial \Omega \cap \partial K$ we set
\[
 \mean{v} := v, 
 \qquad 
 \mean{\mathbf{q}} := \mathbf{q}, 
 \qquad 
 \jump{v} := v \mathbf{n}, 
 \qquad 
 \jump{\mathbf{q}} := \mathbf{q}\cdot \mathbf{n},
\]
with $\mathbf{n}$ the outward unit normal to $\partial \Omega$.

For brevity, we denote by $\nabla_{\calP} v$ the broken gradient of a
function $v:\Omega\to\mathbb{R}$ with $v|_K \in H^1(K)$ for all $K \in
\calP$, i.e., $(\nabla_{\calP} v)|_K = \nabla(v|_K)$.

The classical interior penalty discontinuous Galerkin (IPDG) method
is: find $u_H \in V_\calP$ such that
\begin{equation}\label{wip}
  a_{DG}(u_H,v_H) = \ell(v_H) 
  \qquad \forall v_H \in V_\calP,
\end{equation}
with $a_{DG}: (H^{3/2+\epsilon}(\Omega) + V_\calP)\times
(H^{3/2+\epsilon}(\Omega) + V_\calP)\to\mathbb{R}$, $\epsilon>0$, defined by
\begin{equation}\label{IPDG}
\begin{aligned}
a_{DG}(u_H,v_H) := & 
 \int_\Omega \big(\mathcal{D}\nabla_{\calP} u_H \cdot \nabla_{\calP} v_H + \mu u_H v_H\big)  \mathrm{d}\boldsymbol{x} 
 + \int_{\Gamma_\calP} \sigma_\calP \jump{u_H}\cdot \jump{v_H}  \mathrm{d}s \\
& - \int_{\Gamma_\calP} \big( \mean{\mathcal{D}\nabla u_H}\cdot \jump{v_H} 
  + \theta  \mean{\mathcal{D}\nabla v_H}\cdot \jump{u_H} \big)  \mathrm{d}s,
\end{aligned}
\end{equation}
and $\ell(v_H) := \langle f, v_H \rangle_{\Omega}$. Here $\theta \in
[-1,1]$ and $\sigma_\calP:\Gamma_\calP \to \mathbb{R}_{\ge 0}$ is the
\emph{penalty function}, whose precise form depends on the geometry of
the polytopic elements. The choice $\theta=1$ yields the symmetric
version of IPDG, while $\theta=-1$ gives its non-symmetric
counterpart.

The design of suitable penalty functions $\sigma_\calP$ for very
general polytopic or curved elements has been studied extensively; see
\cite{cangiani2014hp,cangiani2017hp,prismatic,DG-EASE,RIPDG}. In this
work, stronger assumptions on admissible meshes are required due to
the two-scale structure of the positivity-preserving method introduced
below.
 
We define the DG-norm by
\[
  \| v_H \|_{DG} := 
  \Big(
    \| \mathcal{D}^{1/2} \nabla_{\calP} v_H \|_{0,\Omega}^2
    + \| \sqrt{\mu}  v_H \|_{0,\Omega}^2
    + \| \sqrt{\sigma_\calP} \jump{v_H} \|_{0,\Gamma_{\calP}}^2
  \Big)^{1/2},
\]
for any $v_H \in H^{3/2+\epsilon}(\Omega) + V_\calP$, $\epsilon > 0$.  
With this norm, we obtain the following coercivity result for $a_{DG}(\cdot,\cdot)$.

\begin{lemma}[Coercivity of { $a_{DG}(\cdot,\cdot)$}]
  \label{lem:coercivity_coarse}
  Let $F\subset \partial\k_+\cap\partial \k_- $ be a generic face
  shared by two elements $\k_+,\k_-\in\mesh$.  When
  $F\subset\partial\Omega$, we set $\k_-=\emptyset$.  Defining
  \begin{equation}\label{penalty}
    \sigma_\mesh|_F:=8C_{\rm star}k(k-1+d)d^{-1}\max_{*\in\{+,-\}}\delta_{\k_*}H_{\k_*}^{-1},\quad\textrm{where}\quad \delta_{\k_*}:=\norm{\mathcal{D}}{0,\infty,K_*}^2\norm{\mathcal{D}^{-1}}{0,\infty,K_*}
  \end{equation}
  for every  face $F\subset\Gamma_{\mesh}$, we have 
  \begin{equation}\label{coercivity}
    a_{DG}(v,v)\ge \frac{3}{4} \ndgp{v}^2 \qquad\text{for all } v\in V_\mesh.
  \end{equation}
\end{lemma}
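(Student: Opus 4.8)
The plan is to test $a_{DG}$ against the diagonal and dispose of the only non-sign-definite term in \eqref{IPDG}, namely the consistency term $-\int_{\Gamma_\mesh}\big(\mean{\mathcal{D}\nabla v}\cdot\jump{v} + \theta\mean{\mathcal{D}\nabla v}\cdot\jump{v}\big)\ud s = -(1+\theta)\int_{\Gamma_\mesh}\mean{\mathcal{D}\nabla v}\cdot\jump{v}\ud s$. Writing $a_{DG}(v,v) = \|\mathcal{D}^{1/2}\bnabla v\|_{0,\Omega}^2 + \|\sqrt\mu\, v\|_{0,\Omega}^2 + \|\sqrt{\sigma_\mesh}\jump{v}\|_{0,\Gamma_\mesh}^2 - (1+\theta)\int_{\Gamma_\mesh}\mean{\mathcal{D}\nabla v}\cdot\jump{v}\ud s$, it suffices to bound the last term by $\tfrac14\big(\|\mathcal{D}^{1/2}\bnabla v\|_{0,\Omega}^2 + \|\sqrt{\sigma_\mesh}\jump{v}\|_{0,\Gamma_\mesh}^2\big)$; since $|1+\theta|\le 2$, it is enough to show $\big|\int_{\Gamma_\mesh}\mean{\mathcal{D}\nabla v}\cdot\jump{v}\ud s\big|\le\tfrac18\big(\|\mathcal{D}^{1/2}\bnabla v\|_{0,\Omega}^2 + \|\sqrt{\sigma_\mesh}\jump{v}\|_{0,\Gamma_\mesh}^2\big)$. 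Note that $v|_K\in\mathbb{P}_k(K)$ lives on the \emph{polytopic} element, so one works directly with the polytopic trace-inverse estimate rather than the simplicial one \eqref{Trace-inverse}; under Assumption~\ref{mesh_assumption}(a) the star-shapedness constant $C_{\rm star}$ enters exactly here.

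The key steps, in order, are as follows. First I would split the skeleton integral into a sum over faces $F$ and, on each $F\subset\partial K_+\cap\partial K_-$, apply Cauchy--Schwarz in the $\mathcal{D}$-weighted inner product together with Young's inequality to get, for any $\varepsilon_F>0$, $\big|\int_F\mean{\mathcal{D}\nabla v}\cdot\jump{v}\big|\le\tfrac{\varepsilon_F}{2}\int_F\sigma_\mesh^{-1}|\mean{\mathcal{D}\nabla v}|^2 + \tfrac{1}{2\varepsilon_F}\int_F\sigma_\mesh|\jump{v}|^2$; summing the second contributions over all faces already yields $\tfrac{1}{2\varepsilon}\|\sqrt{\sigma_\mesh}\jump{v}\|_{0,\Gamma_\mesh}^2$. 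Second, for the first contributions I bound $|\mean{\mathcal{D}\nabla v}|^2\le\tfrac12(|\mathcal{D}_+^{1/2}\nabla v_+|^2\|\mathcal{D}_+^{1/2}\|^2 + |\mathcal{D}_-^{1/2}\nabla v_-|^2\|\mathcal{D}_-^{1/2}\|^2)$ on $F$ using the symmetry and boundedness of $\mathcal{D}$, then use the polytopic trace-inverse inequality $\|\nabla v_\ast\|_{0,F}^2\le C_{\rm star}k(k-1+d)d^{-1}|F|\,|K_\ast|^{-1}\|\nabla v_\ast\|_{0,K_\ast}^2$ (the inequality underlying \eqref{Trace-inverse}, valid on star-shaped $K$ with the $C_{\rm star}$ factor). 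With the specific choice \eqref{penalty}, $\sigma_\mesh|_F\ge 8C_{\rm star}k(k-1+d)d^{-1}\delta_{K_\ast}H_{K_\ast}^{-1}\ge 8C_{\rm star}k(k-1+d)d^{-1}|F|\,|K_\ast|^{-1}\|\mathcal{D}\|_{0,\infty,K_\ast}^2\|\mathcal{D}^{-1}\|_{0,\infty,K_\ast}$ once one checks $|F|/|K_\ast|\le H_{K_\ast}^{-1}$ for star-shaped elements; this makes $\sigma_\mesh^{-1}$ absorb the trace-inverse constant, the $\|\mathcal{D}\|^2$, and a factor of $8$, leaving $\int_F\sigma_\mesh^{-1}|\mean{\mathcal{D}\nabla v}|^2\le\tfrac{1}{8}\sum_{\ast}\|\mathcal{D}^{1/2}\nabla v\|_{0,K_\ast\cap F^\sharp}^2$ after accounting for the $\tfrac12$ from the average and the $\tfrac12$ from the $\mathcal{D}$-splitting. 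Third, sum over all faces: each element $K$ has its gradient counted once per face but the $8$ in the denominator together with the bookkeeping gives $\le\tfrac14\|\mathcal{D}^{1/2}\bnabla v\|_{0,\Omega}^2$; picking $\varepsilon=1$ balances the two pieces, and collecting everything yields $a_{DG}(v,v)\ge\tfrac34\ndgp{v}^2$.

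The main obstacle I anticipate is twofold and purely bookkeeping-theoretic: first, correctly routing the geometric constant $C_{\rm star}$ and the factor $|F|/|K_\ast|\le H_{K_\ast}^{-1}$ through the polytopic (rather than simplicial) trace-inverse estimate, since \eqref{Trace-inverse} as stated is for a single simplex and the star-shaped polytopic version carries the extra $C_{\rm star}$; one must be careful that the $\max_{*\in\{+,-\}}$ in \eqref{penalty} really dominates \emph{both} one-sided contributions simultaneously, so that on each face the penalty is large enough to control the trace term from whichever side it appears. Second, the constant-chasing must be tight enough that the $\tfrac14$ and $\tfrac34$ genuinely come out — in particular the $8$ in \eqref{penalty} is exactly what is needed to supply one factor of $\tfrac12$ (average squared), one factor of $\tfrac12$ ($\mathcal{D}$-splitting), one factor of $\tfrac18$ (target bound), and the choice $\varepsilon=1$; any looser Young's inequality or a missed factor of $2$ would break the clean $\tfrac34$. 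No genuinely hard analysis is involved once the polytopic trace-inverse inequality and $|F|\le H_K^{-1}|K|$ (equivalently, $|F|\diam(K)\le|K|$ up to $C_{\rm star}$) are in hand.
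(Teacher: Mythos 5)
Your high-level strategy coincides with the paper's (absorb the consistency term via Cauchy--Schwarz/Young into the penalty and the gradient term, using a trace-inverse inequality), but there is a genuine gap in the step where you pass from faces to elements, and it is precisely the step where the polytopic geometry bites. You invoke a per-face ``polytopic trace-inverse inequality'' of the form $\norm{\nabla v}{0,F}^2\le C_{\rm star}k(k-1+d)d^{-1}|F|\,|K|^{-1}\norm{\nabla v}{0,K}^2$, with the norm over the \emph{whole} element $K$ on the right, and then sum over the faces of $K$, conceding that ``each element has its gradient counted once per face.'' Under Assumption~\ref{mesh_assumption} the number of faces of a polytopic element is \emph{not} uniformly bounded (star-shapedness and quasi-uniformity do not preclude many small faces), so this summation does not close: the total is proportional to the number of faces (equivalently, to $|\partial K|/|K|\cdot H_K$, which costs at least an extra factor of $d\,C_{\rm star}$), and no fixed factor $8$ in \eqref{penalty} can absorb an unbounded multiplicity. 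Relatedly, your auxiliary claim $|F|/|K_*|\le H_{K_*}^{-1}$ does not hold with constant one; the correct version carries $C_{\rm star}$ and $d$, so even for elements with few faces your constant-chasing would not produce the $16^{-1}$ (i.e.\ the $\tfrac18$) needed for the clean $\tfrac34$.

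The paper's proof avoids this by a localisation you are missing: each face $F\subset\partial K_+\cap\partial K_-$ is partitioned into simplicial pieces $f$, and for each $f$ one forms the interior simplex $K_*^f\subset K_*$ with base $f$ and apex at the centre of the ball with respect to which $K_*$ is star-shaped. The \emph{simplicial} trace-inverse inequality \eqref{Trace-inverse} is applied on each $K_*^f$ (for $\nabla v\in[\mathbb{P}_{k-1}]^d$, whence the constant $k(k-1+d)$), and the height bound gives $|f|/|K_*^f|\le\rho_{K_*}^{-1}\le C_{\rm star}H_{K_*}^{-1}$ with exactly one factor of $C_{\rm star}$. Crucially, the simplices $K_*^f$ are pairwise disjoint and tile $K_*$, so summing over \emph{all} faces and all pieces reconstitutes $\norm{\mathcal{D}^{1/2}\nabla v}{0,K_*}^2$ exactly once, independently of how many faces $K_*$ has. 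This yields $\norm{\sigma_\mesh^{-1/2}\mean{\mathcal{D}\nabla v}}{0,\Gamma_\mesh}^2\le 16^{-1}\norm{\mathcal{D}^{1/2}\bnabla v}{0,\Omega}^2$ and hence the stated $\tfrac34$-coercivity. To repair your argument, replace your per-face element-wide trace estimate by this disjoint sub-simplex decomposition; the rest of your bookkeeping (the factor $2$ from $|1+\theta|\le 2$, Young's inequality with $\varepsilon=1$, the two factors of $\tfrac12$ from the average and the $\mathcal{D}$-splitting) then goes through as you describe.
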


\begin{proof}
  For $v\in V_\mesh$, we first observe
  \begin{equation}\label{coer_basic}
    a_{DG}(v,v)\ge\ndgp{v}^2-2\norm{ \sigma_\mesh^{-1/2}\mean{\mathcal{D}\nabla v}}{0,\Gamma_{\mesh} }\norm{ \sqrt{\sigma_\mesh}\jump{v}}{0,\Gamma_{\mesh} }.
  \end{equation}  
  We focus on interior facets; boundary facets follow as a special
  case with $\k_-=\emptyset$. Each face $F\in\Gamma_{\mesh}$ is
  partitioned into simplices $\{f\}$ so that $\cup_{f\subset
    F}f=F$. For each simplicial $f$, define $K^f_*\subset K_*$,
  $*\in\{+,-\}$, as the simplex with face $f$ and opposite vertex the
  centre of the ball with respect to which $K_*$ is star-shaped.
  
  Applying the trace-inverse inequality \eqref{Trace-inverse} on $f$,
  we obtain
  \[
  \begin{aligned}
  \norm{ \sigma_\mesh^{-1/2}\mean{\mathcal{D}\nabla v}}{0,f}^2
  &\le
  \frac{1}{2\sigma_\mesh}
  \sum_{*\in\{+,-\}}\norm{\mathcal{D}|_{\k_*}}{0,\infty,f}^2\norm{ \nabla v|_{K_*}}{0,f}^2\\
  &\le
  \frac{1}{2\sigma_\mesh}
  \sum_{*\in\{+,-\}}\norm{\mathcal{D}|_{\k_*}}{0,\infty,f}^2\frac{k(k-1+d)|f|}{d|K^f_*|}\norm{ \nabla v|_{K_*}}{0,K_*^f}^2,
  \end{aligned}
  \]
  since $\nabla v\in [\mathbb{P}_{k-1}(K_*)]^d$ and $\sigma_\mesh$ is
  constant on each $F\in\Gamma_\mesh$ (hence also on each $f\subset
  F$).

  By construction, the simplices $K^f_*$ are disjoint and satisfy
  $\cup_{F\subset \partial\k} \cup_{f\subset F} K_*^f= K_*$.  Defining
  $H_f^{\perp}:=|K^f_*|/|f|$ as the height of $K^f_*$ with base $f$,
  we note that $H_f^{\perp}\ge \rho_K$. Thus,
  \[
  \frac{|f|}{|K^f_*|}\le  \rho_\k^{-1}\le C_{\rm star} H_{\k_*}^{-1}.
  \]
  Therefore,
  \[
  \begin{aligned}
  \norm{ \sigma_\mesh^{-1/2}\mean{\mathcal{D}\nabla v}}{0,\Gamma_{\calP}}^2
  &\le
  \frac{C_{\rm star}}{2}\sum_{F\subset \Gamma_{\mesh}}\sigma_\mesh^{-1}|_F\sum_{f\subset F}
  \sum_{*\in\{+,-\}}\norm{\mathcal{D}|_{\k_*}}{0,\infty,f}^2\norm{\mathcal{D}^{-1}}{0,\infty,\k_*}\frac{k(k-1+d)}{dH_{\k_*}}\norm{\mathcal{D}^{1/2} \nabla v}{0,\k_*^f}^2.
  \end{aligned}
  \]
  Selecting $\sigma_\mesh$ as in \eqref{penalty}, and recalling
  $\cup_{F\subset \partial\k_*} \cup_{f\subset F} K_*^f= K_*$, we
  obtain
  \begin{equation}\label{indef_est_important}
  \norm{ \sigma_\mesh^{-1/2}\mean{\mathcal{D}\nabla v}}{0,\Gamma_{\calP}}^2
  \le 
  16^{-1}\norm{\mathcal{D}^{1/2} \bnabla v}{0,\Omega}^2,
  \end{equation}
  which, combined with \eqref{coer_basic}, gives the result.
 \end{proof}
 
Note that the above (classical) IPDG method on polytopic meshes uses
the same number of elemental basis functions on each element,
regardless of its shape. This is achieved by defining the local basis
functions directly as restrictions of polynomials in the physical
space, rather than through element mappings. For details on the
implementation of this approach, see \cite{cangiani2017hp}.

\section{A nodally bound-preserving composite discontinuous Galerkin method}
\label{Sec:NBP-poly}

In this section we propose a method that enforces bound preservation
on nodal basis functions subordinate to the simplicial submesh $\calT$
defined above. We begin by noting that the cardinality of the
$k$th-order Lagrange basis (and nodes) in $d$ dimensions for a simplex
is $m_{k,d}={k+d \choose d}$. For each simplex $T\subset K$, with
$K\in\calP$, we denote the Lagrange basis functions and nodes by
$\{\phi_i^T\}_{i=1}^{m_{k,d}}$ and $\{\bx_i^T\}_{i=1}^{m_{k,d}}$,
respectively.

Defining $C(K)$ as the space of continuous functions on $K$, we
introduce the elemental nodally bound-preserving recovery operator
$\mathcal{E}_{K}^+: C(K)\to W_{K}^+$ by
\begin{equation}
  \mathcal{E}_{K}^+(v)
  =\sum_{T\in\calT_K}\sum_{i=1}^{m_{k,d}}
   \max\!\big\{0,\min\{v(\bx_i^T),\kappa\}\big\}\phi_i^T.
\end{equation}
By construction, $\mathcal{E}_{K}^+(v)\in W_{K}^+$, where
\begin{align}
  W_{K}^{+}
  :=\{v\in W_{K} : v(\bx_{i}^T)\in[0,\kappa],
   \ \ i=1,\ldots,m_{k,d}, \ T\in\calT_K\}. \label{eq16}
\end{align}
Thus, $\mathcal{E}_{K}^+(v)$ lies within the prescribed range
$[0,\kappa]$ at every nodal point $\bx_i^T$.  

The global nodally bound-preserving recovery operator
$\mathcal{E}^+: C(\Omega)\to W_{\calT}$ is then defined elementwise by
\[
  (\mathcal{E}^+(v))|_K:= \mathcal{E}^+_K(v|_K), \quad K\in\calP,
\]
and for notational convenience we set
\begin{align}
  \mathcal{E}^{-}_{K}(v_{H}):=v_{H}|_K-\mathcal{E}_{K}^{+}(v_{H}),
  \qquad
  \mathcal{E}^{-}(v_{H}):=v_{H}-\mathcal{E}^{+}(v_{H}).
  \label{eq188}
\end{align}

\begin{Remark}[Properties of the recovery operator]
The operator $\mathcal{E}^+$ acts as the identity whenever all nodal
values lie in $[0,\kappa]$. Specifically, if $v_H\in V_\calP$ satisfies
$v_H(\bx_i^T)\in[0,\kappa]$ for all $i=1,\ldots,m_{k,d}$ and
$T\in\calT_K$, then $\mathcal{E}_{K}^+(v_H)=v_H$ on $K$. Consequently,
if this condition holds for every $K\in\calP$, we have
$\mathcal{E}^+(v_H)=v_H$. Conversely, if at least one nodal value lies
outside $[0,\kappa]$, then $\mathcal{E}_{K}^+(v_H)\neq v_H$ on that
element. Finally, if $v_H(\bx_i^T)<0$ for all nodes of some
$K\in\calP$, then $\mathcal{E}_{K}^+(v_H)=0$, so the nonlinear map
$\mathcal{E}^+$ has a non-trivial kernel.
\end{Remark}

To mitigate the effect of the non-trivial kernel, we introduce a
\emph{stabilising} bilinear form. For $w_h,v_h\in W_{\calT}$ we set
\begin{equation}
  s(w_h,v_h)=\sum_{K\in\mesh}\alpha\sum_{T\in\calT_K}\sum_{i=1}^{m_{k,d}}
  \big(\mathcal{D}_{\omega_K} h_T^{d-2}+\mu_T h_T^{d} \big)
  w_h(\bx_i^T)v_h(\bx_i^T) ,\label{eq20}
\end{equation}
where $h_T:=\diam(T)$ and $\mu_T:=\|\mu\|_{0,\infty,T}$ for
$T\in\calT$, while
$\mathcal{D}_{\omega_K}:=\|\mathcal{D}\|_{0,\infty,\omega_K}$ for each
$K\in\calP$. Here $\alpha$ denotes a piecewise constant function with
$\alpha|_K:=\alpha_K>0$, $K\in\mesh$, to be specified below. The
bilinear form $s(\cdot,\cdot)$ induces the norm
$\|v_h\|_{s}:=\sqrt{s(v_h,v_h)}$ on $W_\calT$.

We are now in a position to introduce the composite
bound-preserving discontinuous Galerkin method, which reads: find
$u_H\in V_\calP$ such that
\begin{equation}
  a_{h}(u_{H};v_{H})=\ell(v_{H})
  \qquad \forall v_{H}\in V_\calP,\label{BP-DG}
\end{equation}
with semilinear form
\begin{align}
  a_{h}(u_{H};v_{H})
  :=a_{DG}(\mathcal{E}^{+}(u_{H}),v_{H})
    +s(\mathcal{E}^{-}(u_{H}),v_{H}) ,\label{FEMethod}
\end{align}

\begin{Remark}[Properties of the bound-preserving DG method]
Some observations on the method \eqref{BP-DG} are in order.
\begin{enumerate}
\item If $u_{H}(\bx_i^T)\in[0,\kappa]$ for all $i=1,\dots,m_{k,d}$ and
  $T\in\calT$, then \eqref{BP-DG} reduces to the classical interior
  penalty discontinuous Galerkin method \eqref{wip} with solution
  $u_H\in V_\calP$, i.e.\ the nonlinearity in the first argument of
  $a_h(\cdot;\cdot)$ vanishes.
\item The stabilisation term $s(\cdot,\cdot)$ is active only on
  polytopic elements $K$ where $u_H$ violates the prescribed bounds on
  the numerical solution. Moreover, whenever
  $0<u_{H}(\bx_i^T)<\kappa$, we have
  $\mathcal{E}^{-}(u_{H})(\bx_i^T)=0$; see \cite[Remark~3.2]{ABP24}
  for the proof in the conforming finite element case, which carries
  over directly here.
\item The system to be solved in \eqref{BP-DG} has size
  $\dim(V_\mesh)\times \dim(V_\mesh)$. Hence, the number of degrees of
  freedom is \emph{independent of the submesh $\calT$}.
\end{enumerate}
\end{Remark}

\subsection{Well-posedness}
We now discuss existence and uniqueness for \eqref{BP-DG}. We begin by
showing that $s(\cdot,\cdot)$ controls the kernel of the projection
$\mathcal{E}^+(\cdot)$.

\begin{lemma}
  \label{Lem:s}
There exists a constant $C_{\rm equiv}>0$, depending only on
$C_{\rm star}$, $C_{\rm sh}$, $k$, and the spatial dimension $d$, such
that for every $v_h\in W_\calT$,
\begin{align}
  \ndgp{ v_{h}}^{2}
   \le  C_{\rm equiv} \|\alpha^{-1/2} v_h \|_{s}^{2}.  \label{eq22}
\end{align}
\end{lemma}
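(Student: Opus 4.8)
The plan is to bound each of the three pieces of $\ndgp{v_h}^2$—namely the volume diffusion term, the volume reaction term, and the jump term—by the corresponding nodal sum that appears in $\|\alpha^{-1/2}v_h\|_s^2$. The key observation is that on each simplex $T\in\calT_K$, the function $v_h|_T$ is a polynomial in $\mathbb{P}_k(T)$, and the $m_{k,d}$ Lagrange nodes $\{\bx_i^T\}$ are unisolvent for $\mathbb{P}_k(T)$; hence the map $v_h|_T\mapsto (v_h(\bx_i^T))_{i=1}^{m_{k,d}}$ is an isomorphism onto $\mathbb{R}^{m_{k,d}}$, and by a scaling (Bramble--Hilbert / reference-element) argument one has norm equivalences of the form
\begin{equation}\label{eq:nodal-L2}
  C_1 h_T^d \sum_{i=1}^{m_{k,d}} v_h(\bx_i^T)^2 \le \|v_h\|_{0,T}^2 \le C_2 h_T^d \sum_{i=1}^{m_{k,d}} v_h(\bx_i^T)^2,
\end{equation}
with constants depending only on $k$, $d$, and the shape-regularity constant $C_{\rm sh}$ of $\calT$ (via Assumption~\ref{mesh_assumption_two}(a)). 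First I would establish \eqref{eq:nodal-L2}.

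Next, the reaction term is immediate: $\|\sqrt{\mu}\,v_h\|_{0,\Omega}^2 = \sum_K \sum_{T\in\calT_K}\|\sqrt{\mu}\,v_h\|_{0,T}^2 \le \sum_K\sum_{T}\mu_T\|v_h\|_{0,T}^2 \le C_2 \sum_K\sum_T \mu_T h_T^d \sum_i v_h(\bx_i^T)^2$, which is controlled by the $\mu_T h_T^d$ part of $s(\cdot,\cdot)$. For the diffusion term I would combine the inverse inequality \eqref{Standard-Inverse-Ineq} on $T$ with shape-regularity ($\rho_T \simeq h_T$) and \eqref{eq:nodal-L2}:
\begin{equation}\label{eq:diff-bound}
  \|\mathcal{D}^{1/2}\nabla v_h\|_{0,T}^2 \le \|\mathcal{D}\|_{0,\infty,T}\,C_{\rm inv}k^4\rho_T^{-2}\|v_h\|_{0,T}^2 \le C\,\mathcal{D}_{\omega_K} h_T^{d-2}\sum_{i=1}^{m_{k,d}} v_h(\bx_i^T)^2,
\end{equation}
where I used $\|\mathcal{D}\|_{0,\infty,T}\le\mathcal{D}_{\omega_K}$ (as $T\subset K\subset\omega_K$); summing over $T$ and $K$ gives control by the $\mathcal{D}_{\omega_K}h_T^{d-2}$ part of $s(\cdot,\cdot)$.

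The jump term is the main obstacle. For a face $F\subset\partial K\cap\partial K'$ of the polytopic mesh (or $F\subset\partial\Omega$), one must bound $\int_F \sigma_\calP \jump{v_h}\cdot\jump{v_h}\,\ud s$. On the $K$-side, $F$ is covered by simplicial facets $f$ of submesh simplices $T\in\calT_K$; using the trace-inverse inequality \eqref{Trace-inverse} gives $\|v_h|_T\|_{0,f}^2 \le \frac{(k+1)(k+d)|f|}{d|T|}\|v_h\|_{0,T}^2 \le C\, h_T^{-1}\|v_h\|_{0,T}^2$ by shape-regularity, and then \eqref{eq:nodal-L2} converts this to $C\,h_T^{d-1}\sum_i v_h(\bx_i^T)^2$. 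The delicate point is the penalty weight: by \eqref{penalty}, $\sigma_\calP|_F \simeq k(k-1+d)\max_*\delta_{K_*}H_{K_*}^{-1}$, and one must absorb $\sigma_\calP|_F |f|\, h_T^{-1}\|v_h\|_{0,T}^2 \simeq \delta_{K_*}H_{K_*}^{-1} h_T^{-1}|f|\,\|v_h\|_{0,T}^2$ into $\mathcal{D}_{\omega_K}h_T^{d-2}$-weighted nodal sums; since $|f|\simeq h_T^{d-1}$ one needs $H_{K_*}^{-1}h_T^{-1}h_T^{d-1}\le C h_T^{d-2}$, i.e. $h_T\le H_{K_*}$, which is exactly guaranteed by Assumption~\ref{mesh_assumption_two}(b) (for the neighbour side $K'$) and trivially $h_T\le H_K$. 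The factor $\delta_{K_*}=\|\mathcal{D}\|_{0,\infty,K_*}^2\|\mathcal{D}^{-1}\|_{0,\infty,K_*}$ is absorbed into the generic constant together with a $\mathcal{D}_{\omega_K}\ge\|\mathcal{D}\|_{0,\infty,K_*}$ comparison (valid since $K_*\in\omega_K$, using local quasi-uniformity and the neighbourhood structure). One must be slightly careful that the jump on face $F$ is charged to nodal sums of simplices in \emph{both} adjacent polytopic elements; using $\jump{v_h}\cdot\jump{v_h}\le 2(v_h|_K^2 + v_h|_{K'}^2)$ on $F$ and charging each side to its own submesh handles this, at the cost of a summability argument noting each simplex $T$ has boundedly many facets on $\Gamma_\calP$. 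Collecting \eqref{eq:diff-bound}, the reaction bound, and the jump bound, and dividing through by $\alpha_K$ inside the sums (which is what $\|\alpha^{-1/2}v_h\|_s^2$ encodes), yields \eqref{eq22} with $C_{\rm equiv}$ depending only on $C_{\rm star}$, $C_{\rm sh}$, $k$, and $d$.
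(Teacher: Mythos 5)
Your proposal is correct and follows essentially the same route as the paper's proof: the (one-sided) nodal--$L^2$ equivalence on each simplex, the inverse inequality \eqref{Standard-Inverse-Ineq} with shape-regularity for the diffusion term, and the combination of $|f|\lesssim h_T^{d-1}$ with Assumption~\ref{mesh_assumption_two}(b) (i.e.\ $h_T\le\min_{*}H_{K_*}$) to absorb the $H_{K_*}^{-1}$ in $\sigma_\calP$ into $h_T^{d-2}$ for the jump term. The only (immaterial) difference is that you pass from the facet $f$ to the simplex $T$ via the trace-inverse inequality \eqref{Trace-inverse} and then apply the nodal equivalence on $T$, whereas the paper expands $v_h$ directly in the Lagrange basis on $f$; both yield the same $h_T^{d-1}\sum_i v_h^2(\bx_i^T)$ facet bound, and your handling of the $\delta_{K_*}$ versus $\mathcal{D}_{\omega_K}$ comparison matches the paper's own treatment.
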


\begin{proof}
Recall that the number of Lagrange basis functions (and nodes) of
degree $k$ in $d$ dimensions is $m_{k,d}=\binom{k+d}{d}$. For a simplex
$T\subset K$, $K\in\calP$, denote the Lagrange basis and nodes by
$\{\phi_i^T\}_{i=1}^{m_{k,d}}$ and $\{\bx_i^T\}_{i=1}^{m_{k,d}}$,
respectively. For an affine map $F_T:\hat T\to T$ from a reference
simplex $\hat T$, let $\{\psi_i\}_{i=1}^{m_{k,d}}$ be the corresponding
reference Lagrange basis so that $\phi_i^T\circ F_T=\psi_i$. Then
\[
\begin{aligned}
\|v_h\|_{0,T}^2
&\le m_{k,d}\sum_{i=1}^{m_{k,d}} v_h^2(\bx_i^T)\int_T \big(\phi_i^T(\bx)\big)^2 \mathrm{d}\bx\\
&\le m_{k,d} \max_{1\le i\le m_{k,d}}\|\phi_i^T\|_{0,\infty,T}^2 |T| 
   \sum_{i=1}^{m_{k,d}} v_h^2(\bx_i^T)
   =  C_{k,d}^{L^2} |T| \sum_{i=1}^{m_{k,d}} v_h^2(\bx_i^T),
\end{aligned}
\]
with $C_{k,d}^{L^2}:= m_{k,d} \max_{1\le i\le m_{k,d}}\|\psi_i\|_{0,\infty,\hat T}^2$,
since $\|\psi_i\|_{0,\infty,\hat T}=\|\phi_i^T\|_{0,\infty,T}$.

Using this bound together with the inverse inequality
\eqref{Standard-Inverse-Ineq} yields
\[
\|\mathcal{D}^{1/2}\nabla v_h\|_{0,T}^2
 \le  C_{\rm inv} k^4 \|\mathcal{D}\|_{0,\infty,T} \rho_T^{-2} \|v_h\|_{0,T}^2
 \le  C_{\rm inv} C_{k,d}^{L^2} k^4 \|\mathcal{D}\|_{0,\infty,T} \rho_T^{-2} |T|
        \sum_{i=1}^{m_{k,d}} v_h^2(\bx_i^T).
\]
Since $|T|\le h_T^d$ and, by Assumption~\ref{mesh_assumption_two}, the
shape-regularity gives $|T| \le C_{\rm sh}^2 \rho_T^2 h_T^{d-2}$, we conclude
\begin{equation}\label{H^1_stab}
  \|\mathcal{D}^{1/2}\nabla v_h\|_{0,T}^2
   \le  C_{k,d}^{H^1} \|\mathcal{D}\|_{0,\infty,T} h_T^{d-2}
          \sum_{i=1}^{m_{k,d}} v_h^2(\bx_i^T),
\end{equation}
with
\[
C_{k,d}^{H^1}:= C_{\rm inv} C_{\rm sh}^2 C_{k,d}^{L^2} k^4.
\]

Next, let $f\subset \partial T\cap\partial K$ be a face of the
simplicial subelement $T$ contained in the face $F$ of the polytopic
element $K$ containing $T$. Then
\[
\begin{aligned}
\|\sqrt{\sigma_\calP} v_h\|_{0,f}^2
&\le m_{k,d} \sigma_\calP \sum_{i=1}^{m_{k,d}} v_h^2(\bx_i^T)
    \int_f \big(\phi_i^T(\bx)\big)^2 \mathrm{d}s\\
&\le 4 C_{\rm star} m_{k,d} \max_{1\le i\le m_{k,d}}\|\psi_i\|_{0,\infty,\hat T} 
    \frac{k(k-1+d)}{d} 
    \|\mathcal{D}\|_{0,\infty,K_-\cup K_+} 
    \big(\min_{*\in\{-,+\}} H_{\k_*}\big)^{-1} |f| 
    \sum_{i=1}^{m_{k,d}} v_h^2(\bx_i^T),
\end{aligned}
\]
where $f\subset F\subset K_-\cap K_+$, and we have only retained the
nodes/basis functions that are nontrivial on $f$, which is a
$(d-1)$-simplex (hence the adapted numbering). By
Assumption~\ref{mesh_assumption_two}(b), $h_T\le \min_{*\in\{-,+\}}
H_{\k_*}$, and since $|f|\le h_T^{d-1}$ we obtain
\begin{equation}\label{pen_stab}
  \|\sqrt{\sigma_\calP} v_h\|_{0,f}^2
   \le 
  C_{k,d}^{\sigma} \|\mathcal{D}\|_{0,\infty,K_-\cup K_+} h_T^{d-2}
  \sum_{i=1}^{m_{k,d}} v_h^2(\bx_i^T),
\end{equation}
with
\[
C_{k,d}^{\sigma}:= \frac{4}{d} C_{\rm star} m_{k,d} k(k-1+d) 
                   \max_{1\le i\le m_{k,d}}\|\psi_i\|_{0,\infty,\hat T}.
\]

For implementation purposes we now bound each contribution in terms of
the stabilisation weights. Recalling the notation
$\mathcal{D}_{\omega_\k}$ and $\mu_T$ from the definition of
$s(\cdot,\cdot)$, combining the above estimates yields
\[
\ndgp{ v_{h}}^{2}
 \le 
C_{\rm equiv}\sum_{\k\in\mesh}\sum_{T\in\calT_\k}\sum_{i=1}^{m_{k,d}}
\big(\mathcal{D}_{\omega_\k} h_T^{d-2}+\mu_T h_T^{d}\big) v_h^2(\bx_i^T),
\]
with $C_{\rm equiv}:=\max\{C_{k,d}^{H^1}, 2 C_{k,d}^{\sigma}\}$ (note
that $C_{k,d}^{L^2}\le C_{k,d}^{H^1}$). The claim then follows from
the definition of $\|\cdot\|_{s}$.
\end{proof}

The next step towards proving well-posedness is the following
monotonicity result, whose proof is identical to that of
\cite[Lemma~3.1]{BGPV24} and is therefore omitted for brevity.

\begin{lemma}\label{l31}
The bilinear form $s(\cdot,\cdot)$, defined in \eqref{eq20}, satisfies
\begin{align}
  s(\mathcal{E}^{-}(v_{H})-\mathcal{E}^{-}(w_{H}),
    \mathcal{E}^{+}(v_{H})-\mathcal{E}^{+}(w_{H}))
  &\geq 0 \qquad \forall v_{H},w_{H} \in V_{\mathcal{P}}, \label{eq23}\\[0.5em]
  s(\mathcal{E}^{-}(v_{H}),w_{h}-\mathcal{E}^{+}(v_{H}))
  &\leq 0 \qquad \forall v_{H} \in V_{\mathcal{P}},\ 
     w_{h} \in W_\k^+,\ \k\in\mesh. \label{eq24}
\end{align}
\end{lemma}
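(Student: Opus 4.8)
The plan is to reduce both inequalities, term by term, to elementary scalar properties of the Euclidean projection onto the interval $[0,\kappa]$, exploiting that $s(\cdot,\cdot)$ is by definition \eqref{eq20} a sum of products of \emph{nodal values} with strictly positive weights. Write $P:\mathbb{R}\to[0,\kappa]$, $P(t):=\max\{0,\min\{t,\kappa\}\}$, for the scalar projection, and set $c_{T,i}:=\alpha_K\big(\mathcal{D}_{\omega_K}h_T^{d-2}+\mu_T h_T^{d}\big)>0$ for $T\in\calT_K$, $K\in\mesh$, $1\le i\le m_{k,d}$. The first observation is that $\mathcal{E}^+$ and $\mathcal{E}^-$ act \emph{pointwise} at the submesh nodes: for every $v_H\in V_\calP$,
\[
\mathcal{E}^+(v_H)(\bx_i^T)=P\big(v_H(\bx_i^T)\big),\qquad
\mathcal{E}^-(v_H)(\bx_i^T)=v_H(\bx_i^T)-P\big(v_H(\bx_i^T)\big).
\]
Hence $s(\mathcal{E}^-(v_H)-\mathcal{E}^-(w_H),\mathcal{E}^+(v_H)-\mathcal{E}^+(w_H))=\sum_{K\in\mesh}\sum_{T\in\calT_K}\sum_{i=1}^{m_{k,d}}c_{T,i}\,\delta_i^T$, with $\delta_i^T=\big[(a-P(a))-(b-P(b))\big]\big[P(a)-P(b)\big]$, $a:=v_H(\bx_i^T)$, $b:=w_H(\bx_i^T)$; and similarly the left-hand side of \eqref{eq24} is a $c_{T,i}$-weighted sum of terms $(a-P(a))(c-P(a))$ with $c:=w_h(\bx_i^T)$. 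Since each $c_{T,i}>0$, it suffices to prove the corresponding scalar inequalities.

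For \eqref{eq23} I would use that $P$ is nondecreasing and $1$-Lipschitz, whence $\mathrm{Id}-P$ is also nondecreasing (indeed, for $a\ge b$ one has $0\le P(a)-P(b)\le a-b$). Therefore the two factors of $\delta_i^T$ are both $\ge 0$ when $a\ge b$ and both $\le 0$ when $a\le b$, so $\delta_i^T\ge 0$ at every node; equivalently, this is the firm nonexpansiveness of the projection onto the convex set $[0,\kappa]$. Summing with the positive weights $c_{T,i}$ gives \eqref{eq23}.

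For \eqref{eq24} I would invoke the variational characterisation of the projection onto $[0,\kappa]$, namely $(a-P(a))(c-P(a))\le 0$ for all $a\in\mathbb{R}$ and all $c\in[0,\kappa]$ (immediate from the three cases $a<0$, $a\in[0,\kappa]$, $a>\kappa$). Here the crucial point is that $c=w_h(\bx_i^T)$ indeed lies in $[0,\kappa]$ precisely because $w_h\in W_K^+$, by \eqref{eq16}; thus every summand is nonpositive and \eqref{eq24} follows. I do not anticipate any genuine obstacle: the argument is that of \cite[Lemma~3.1]{BGPV24} transplanted verbatim, the only thing deserving a line of care being the identification of $\mathcal{E}^{\pm}$ with the nodewise scalar clamp (and noting that a node shared by several simplices of $\calT_K$ is simply counted with multiplicity in \eqref{eq20}, which is harmless since all weights are positive), so that the one-dimensional projection inequalities may be applied term by term.
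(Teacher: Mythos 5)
Your proof is correct and follows essentially the same route as the paper, which omits the argument and defers to \cite[Lemma~3.1]{BGPV24}: since $s(\cdot,\cdot)$ is a positively weighted sum of nodal products and $\mathcal{E}^{\pm}$ act nodewise as the scalar clamp $P$ onto $[0,\kappa]$ and its complement, both inequalities reduce to the monotonicity/firm nonexpansiveness of $P$ and the variational characterisation $(a-P(a))(c-P(a))\le 0$ for $c\in[0,\kappa]$. Your remarks on the multiplicity of shared submesh nodes and on $w_h\in W_K^+$ guaranteeing $c\in[0,\kappa]$ are exactly the points that need checking, and they are handled correctly.
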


We now establish coercivity and continuity of the stabilised
semilinear form $a_h(\cdot;\cdot)$ for a specific choice of trial and
test functions, which will be needed below. A crucial point in the
proof is that the discontinuity-penalisation parameter $\sigma_\calP$,
defined in \eqref{penalty}, is \emph{sufficient} to guarantee
stability even though the trial functions may be polynomials on the
submesh $\calT$.

\begin{lemma}\label{Lem:cont_twoscale}
Let $v_H,w_H\in V_\calP$, and set
$r_h^\pm:=\mathcal{E}^\pm(v_H)-\mathcal{E}^\pm(w_H)$ for brevity.
Define
\[
\calT_\k^\partial:=\{T\in \calT_\k:\ \exists \text{ face } f\subset \partial T\cap\partial K\}
\]
as the set of simplices $T$ in $\k$ touching the boundary of $\k$.  
Select $\alpha>0$ in \eqref{eq20} as
\begin{equation}\label{alpha_choice}
  \alpha|_\k:= \gamma \max\Big\{1,\frac{ H_{\k}}{8C_{\rm star}}\max_{T\in\calT_\k^\partial}\frac{|f|}{|T|}\Big\},\quad K\in\mesh,
\end{equation}
with $\gamma\ge 25C_{\rm equiv}$. Then
\begin{align}\label{cont_nonlinear}
a_{DG}(r_h^+, z_H) +s(r_h^-,z_H) 
\le \frac{7}{4}\Big(\ndgp{r_h^+}^2+\norm{r_h^-}{s}^2\Big)^{1/2} 
\Big(\ndgp{z_H}^2+\norm{z_H}{s}^2\Big)^{1/2}, 
\end{align}
for any $z_H\in V_\mesh$.
\end{lemma}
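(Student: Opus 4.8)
The plan is to split $a_{DG}(r_h^+,z_H)$ into the three natural groups of terms in \eqref{IPDG} and estimate each, to bound $s(r_h^-,z_H)$ by the Cauchy--Schwarz inequality for the symmetric positive semidefinite form $s(\cdot,\cdot)$, and finally to collect everything with a two-variable Cauchy--Schwarz argument pairing $(\ndgp{r_h^+},\norm{r_h^-}{s})$ with $(\ndgp{z_H},\norm{z_H}{s})$. (All the quantities are well defined, since $r_h^\pm\in W_\calT$ and $V_\calP\subset W_\calT$.) Write $a_{DG}(r_h^+,z_H)=\mathrm{I}+\mathrm{II}+\mathrm{III}$, where $\mathrm{I}$ collects the volume diffusion, reaction and penalty integrals (all of inner-product type), $\mathrm{II}:=-\int_{\Gamma_\calP}\mean{\mathcal{D}\nabla r_h^+}\cdot\jump{z_H}\ud s$, and $\mathrm{III}:=-\theta\int_{\Gamma_\calP}\mean{\mathcal{D}\nabla z_H}\cdot\jump{r_h^+}\ud s$. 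A vector-valued Cauchy--Schwarz inequality gives $\mathrm{I}\le\ndgp{r_h^+}\ndgp{z_H}$ immediately. For $\mathrm{III}$ I would apply Cauchy--Schwarz on $\Gamma_\calP$, control the test gradient trace by the key estimate \eqref{indef_est_important} (which applies verbatim since $z_H\in V_\calP$), and use $|\theta|\le1$ together with $\norm{\sqrt{\sigma_\calP}\jump{r_h^+}}{0,\Gamma_\calP}\le\ndgp{r_h^+}$, to obtain $\mathrm{III}\le\tfrac14\ndgp{r_h^+}\ndgp{z_H}$. And $s(r_h^-,z_H)\le\norm{r_h^-}{s}\norm{z_H}{s}$ by Cauchy--Schwarz.

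The only genuinely new difficulty is $\mathrm{II}$: since $r_h^+$ is piecewise polynomial on the \emph{fine} submesh, a trace-inverse inequality applied directly on the sub-simplices $T\subset K$ produces the factor $|f|/|T|\sim h_T^{-1}$, whereas the coarse penalty $\sigma_\calP$ only scales like $H_K^{-1}$, so a naive bound loses the submesh aspect ratio. The remedy I would use is to write $r_h^+=e_H-r_h^-$ with $e_H:=v_H-w_H\in V_\calP$, so that $\mean{\mathcal{D}\nabla r_h^+}=\mean{\mathcal{D}\nabla e_H}-\mean{\mathcal{D}\nabla r_h^-}$. The coarse part is handled by \eqref{indef_est_important}: $\norm{\sigma_\calP^{-1/2}\mean{\mathcal{D}\nabla e_H}}{0,\Gamma_\calP}\le\tfrac14\norm{\mathcal{D}^{1/2}\bnabla e_H}{0,\Omega}\le\tfrac14\ndgp{e_H}$; and since $\ndgp{e_H}\le\ndgp{r_h^+}+\ndgp{r_h^-}$, while Lemma~\ref{Lem:s} combined with $\alpha|_K\ge\gamma\ge25C_{\rm equiv}$ gives $\ndgp{r_h^-}^2\le C_{\rm equiv}\gamma^{-1}\norm{r_h^-}{s}^2\le\tfrac1{25}\norm{r_h^-}{s}^2$, the coarse part contributes at most $\tfrac14\ndgp{r_h^+}+\tfrac1{20}\norm{r_h^-}{s}$. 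For the fine part $\mean{\mathcal{D}\nabla r_h^-}$, on each $T\in\calT_\k^\partial$ with facet $f\subset F\subset\partial K$ I would combine the trace-inverse inequality \eqref{Trace-inverse} applied to $\nabla r_h^-|_T\in[\mathbb{P}_{k-1}(T)]^d$, the lower bound on $\sigma_\calP$ from \eqref{penalty}, and the ellipticity of $\mathcal{D}$ to get $\norm{\sigma_\calP^{-1/2}\mathcal{D}\nabla r_h^-|_T}{0,f}^2\le\tfrac{H_K|f|}{8C_{\rm star}|T|}\norm{\mathcal{D}^{1/2}\nabla r_h^-}{0,T}^2$; the decisive point is that the choice \eqref{alpha_choice} is engineered exactly so that $\tfrac{H_K|f|}{8C_{\rm star}|T|}\le\alpha_K/\gamma$. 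Converting $\norm{\mathcal{D}^{1/2}\nabla r_h^-}{0,T}^2$ to nodal values via the inverse-inequality bound \eqref{H^1_stab}, and summing over $f$, over $F\subset\partial K$, and over $K\in\calP$ (each simplex supplying at most $d+1$ boundary facets, absorbed into the constant) gives $\norm{\sigma_\calP^{-1/2}\mean{\mathcal{D}\nabla r_h^-}}{0,\Gamma_\calP}^2\le C\,C_{\rm equiv}\gamma^{-1}\norm{r_h^-}{s}^2$ with $C$ depending only on $d$, hence a bound $\varepsilon_0\norm{r_h^-}{s}$ with $\varepsilon_0$ small. Altogether, after $\norm{\sqrt{\sigma_\calP}\jump{z_H}}{0,\Gamma_\calP}\le\ndgp{z_H}$, $\mathrm{II}\le(\tfrac14\ndgp{r_h^+}+\varepsilon\norm{r_h^-}{s})\ndgp{z_H}$ with $\varepsilon=\tfrac1{20}+\varepsilon_0$.

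Collecting the bounds for $\mathrm{I}$, $\mathrm{II}$, $\mathrm{III}$ and $s(r_h^-,z_H)$ yields
\[
a_{DG}(r_h^+,z_H)+s(r_h^-,z_H)\le\tfrac32\,\ndgp{r_h^+}\ndgp{z_H}+\varepsilon\,\norm{r_h^-}{s}\ndgp{z_H}+\norm{r_h^-}{s}\norm{z_H}{s},
\]
and the last step is to read the right-hand side as a bilinear pairing of $(\ndgp{r_h^+},\norm{r_h^-}{s})$ against $(\ndgp{z_H},\norm{z_H}{s})$ whose coefficient matrix has first row $(\tfrac32,0)$ and second row $(\varepsilon,1)$; a short computation of its spectral norm shows it does not exceed $\tfrac74$ provided $\varepsilon$ lies below an explicit threshold (roughly $0.7$), which is comfortably satisfied because $\varepsilon$ is of order $\tfrac1{20}+\sqrt{(d+1)/50}$ for $d\in\{2,3\}$. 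This gives \eqref{cont_nonlinear}. I expect the main obstacle to be the treatment of $\mathrm{II}$: the conceptual ingredients are the splitting $r_h^+=e_H-r_h^-$ (so that the coarse gradient trace is controlled by the coarse penalty through \eqref{indef_est_important}) and the observation that the element-local weight in $\alpha$ is precisely the aspect ratio $H_K|f|/|T|$ produced by a naive trace-inverse estimate, so that the residual fine-mesh term is absorbed by $\norm{r_h^-}{s}$; once these are in place, the rest is careful but routine bookkeeping of constants and of which simplices and facets are being summed.
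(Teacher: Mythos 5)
Your proposal is correct and follows essentially the same route as the paper's proof: the same splitting of $a_{DG}(r_h^+,z_H)$ into the inner-product part and the two consistency terms, the same key decomposition $\mean{\mathcal{D}\nabla r_h^+}=\mean{\mathcal{D}\nabla(v_H-w_H)}-\mean{\mathcal{D}\nabla r_h^-}$ with \eqref{indef_est_important} handling the coarse contributions, and the same trace--inverse argument on $\calT_\k^\partial$ showing that the choice \eqref{alpha_choice} absorbs the submesh aspect ratio into $\norm{r_h^-}{s}$ via Lemma~\ref{Lem:s}. The only (immaterial) difference is the final bookkeeping, where you compute a $2\times2$ spectral norm while the paper uses a discrete Cauchy--Schwarz step with $\sqrt{2}\le 2$; both yield the constant $\tfrac{7}{4}$.
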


\begin{proof}
From standard estimates, we obtain
\begin{equation}\label{cont_one}
a_{DG}(r_h^+, z_H) \le \ndgp{r_h^+}\ndgp{z_H}
-	\int_{\Gamma_{\calP}}\mean{\mathcal{D}\nabla r_h^+}\cdot\jump{z_H}\ud s
-	\int_{\Gamma_{\calP}}\mean{\mathcal{D}\nabla z_H}\cdot\jump{r_h^+}\ud s.
\end{equation}

We now estimate the second and third terms on the right-hand side.  
For the last term, applying Cauchy--Schwarz together with
\eqref{indef_est_important} gives
\begin{equation}\label{indef_easy}
\Big|\int_{\Gamma_{\calP}}\mean{\mathcal{D}\nabla z_H}\cdot\jump{r_h^+}\ud s\Big|
\le 
4^{-1}\norm{\mathcal{D}^{1/2}\bnabla z_H}{0,\Omega}
\norm{\sqrt{\sigma_\calP}\jump{r_h^+}}{0,\Gamma_{\calP}}.
\end{equation}

For the remaining term, we decompose as
\begin{equation}\label{indef_cool}
\int_{\Gamma_{\calP}}\mean{\mathcal{D}\nabla r_h^+}\cdot\jump{z_H}\ud s
= \int_{\Gamma_{\calP}}\mean{\mathcal{D}\nabla (v_H-w_H)}\cdot\jump{z_H}\ud s
-\int_{\Gamma_{\calP}}\mean{\mathcal{D}\nabla r_h^-}\cdot\jump{z_H}\ud s
=:(I)-(II),
\end{equation}
since $r_h^++r_h^-=v_H-w_H$.

For $(I)$, Cauchy--Schwarz and \eqref{indef_est_important} yield
\[
\begin{aligned}
|(I)|
\le&\ 
4^{-1}\norm{\mathcal{D}^{1/2}\bnabla (v_H-w_H)}{0,\Omega}
\norm{\sqrt{\sigma_\calP}\jump{z_H}}{0,\Gamma_{\calP}}\\
\le&\
4^{-1}\big(\norm{\mathcal{D}^{1/2}\bnabla r_h^+}{0,\Omega}
+\norm{\mathcal{D}^{1/2}\bnabla r_h^-}{0,\Omega}\big)
\norm{\sqrt{\sigma_\calP}\jump{z_H}}{0,\Gamma_{\calP}}.
\end{aligned}
\]

For $(II)$, applying a trace--inverse inequality to every
$T\in\calT_\k^\partial$, $\k\in\mesh$, gives
\[
\begin{aligned}
|(II)|\le&\
\sum_{\k\in\mesh}\sum_{\substack{f\subset \partial T\cap\partial K\\ T\in \mathcal{T}_\k^\partial}}
\norm{\sigma_\calP^{-1/2}\mathcal{D}\nabla r_h^-|_T}{0,f}
\norm{\sqrt{\sigma_\calP}\jump{z_H}}{0,f}\\
\le&\
\bigg(\sum_{\k\in\mesh}\sum_{\substack{T\in \mathcal{T}_\k^\partial}}
\frac{\delta_{\k}|f|}{8C_{\rm star}|T|}
\min_{*\in\{+,-\}}H_{\k_*}\delta_{\k_*}^{-1}
\norm{\mathcal{D}^{1/2}\nabla r_h^-}{0,T}^2\bigg)^{1/2}
\norm{\sqrt{\sigma_\calP}\jump{z_H}}{0,\Gamma_{\calP}}.
\end{aligned}
\]

Since $\min_{*\in\{+,-\}}H_{\k_*}\delta_{\k_*}^{-1}\le H_\k\delta_{\k}^{-1}$ (as one of the $\k_*$ is $\k$ itself), and
\[
\alpha|_\k= \gamma \max\Big\{1,\frac{ H_{\k}}{8C_{\rm star}}\max_{T\in\calT_\k^\partial}\frac{|f|}{|T|}\Big\},
\]
we deduce
\begin{align}\label{cool_minus}
|(II)|
\le&\ \gamma^{-1/2}\bigg(\sum_{\k\in\mesh}\sum_{T\in \mathcal{T}_\k^\partial}
\norm{\sqrt{\alpha}\mathcal{D}^{1/2}\nabla r_h^-}{0,T}^2\bigg)^{1/2}
\norm{\sqrt{\sigma_\calP}\jump{z_H}}{0,\Gamma_{\calP}}\\
\le&\
\gamma^{-1/2}\norm{\sqrt{\alpha}\mathcal{D}^{1/2}\bnabla r_h^-}{0,\Omega}
\norm{\sqrt{\sigma_\calP}\jump{z_H}}{0,\Gamma_{\calP}}.\nonumber
\end{align}

Returning to \eqref{indef_cool}, the above bounds and Lemma
\ref{Lem:s}, with $r_h^-\in W_\calT$ and $\sqrt{\alpha}r_h^-\in
W_\calT$, imply
\[
\begin{aligned}
\Big|	\int_{\Gamma_{\calP}}\mean{\mathcal{D}\nabla r_h^+}\cdot\jump{z_H}\ud s\Big|
\le&\  4^{-1}\Big(\norm{\mathcal{D}^{1/2}\bnabla r_h^+}{0,\Omega}
+\norm{\mathcal{D}^{1/2}\bnabla r_h^-}{0,\Omega}
+4\gamma^{-1/2}\norm{\sqrt{\alpha}\mathcal{D}^{1/2}\bnabla r_h^-}{0,\Omega}\Big)
\\
&
\qquad \times\norm{\sqrt{\sigma_\calP}\jump{z_H}}{0,\Gamma_{\calP}}\\
\le&\  \tfrac{\sqrt{2}}{4}\Big(\norm{ \mathcal{D}^{1/2}\bnabla r_h^+}{0,\Omega}^2
+ 25 C_{\rm equiv}\gamma^{-1}\norm{ r_h^-}{s}^2\Big)^{1/2}
\norm{\sqrt{\sigma_\calP}\jump{z_H}}{0,\Gamma_{\calP}}.
\end{aligned}
\]

Since $\gamma\ge 25C_{\rm equiv}$, we conclude
\begin{equation}\label{indef_cool_too}
\Big|\int_{\Gamma_{\calP}}\mean{\mathcal{D}\nabla r_h^+}\cdot\jump{z_H}\ud s\Big|
\le 
\tfrac{\sqrt{2}}{4}\big(\norm{\mathcal{D}^{1/2}\bnabla r_h^+}{0,\Omega}^2
+\norm{r_h^-}{s}^2\big)^{1/2}
\norm{\sqrt{\sigma_\calP}\jump{z_H}}{0,\Gamma_{\calP}}.
\end{equation}

Finally, substituting \eqref{indef_easy} and \eqref{indef_cool_too}
into \eqref{cont_one}, and using $\sqrt{2}\le 2$, yields
\eqref{cont_nonlinear}.
\end{proof}

\begin{lemma}\label{Lem:mon_twoscale}
With the assumptions and notation of Lemma \ref{Lem:cont_twoscale}, we also have
\begin{equation}\label{mon_dec_lemma}
\begin{aligned}
a_{DG}(r_h^+,v_H-w_H)+s(r_h^-,v_H-w_H) 
 \ge  \frac{1}{2}\big(\ndgp{v_H-w_H}^2+\|r_h^-\|_{s}^2\big).
\end{aligned}
\end{equation}
\end{lemma}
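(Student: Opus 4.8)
The plan is to peel off a coercive contribution on the coarse space $V_\calP$ and then control everything that remains by $\norm{r_h^-}{s}$. Writing $e_H:=v_H-w_H\in V_\calP$ and using $r_h^++r_h^-=e_H$ together with the bilinearity of $a_{DG}(\cdot,\cdot)$, I would first rewrite the left-hand side of \eqref{mon_dec_lemma} as
\[
a_{DG}(r_h^+,e_H)+s(r_h^-,e_H)=a_{DG}(e_H,e_H)-a_{DG}(r_h^-,e_H)+s(r_h^-,e_H).
\]
Since $e_H\in V_\calP$, Lemma~\ref{lem:coercivity_coarse} gives $a_{DG}(e_H,e_H)\ge\tfrac34\ndgp{e_H}^2$; and since $s(\cdot,\cdot)$ is bilinear with $s(r_h^-,r_h^+)\ge 0$ by the first monotonicity inequality \eqref{eq23} of Lemma~\ref{l31}, we get $s(r_h^-,e_H)=s(r_h^-,r_h^+)+\norm{r_h^-}{s}^2\ge\norm{r_h^-}{s}^2$. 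The whole statement therefore reduces to the perturbation estimate
\[
a_{DG}(r_h^-,e_H)\ \le\ \tfrac14\ndgp{e_H}^2+\tfrac12\norm{r_h^-}{s}^2 .
\]

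To prove this, I would split $a_{DG}(r_h^-,e_H)$ into (i) its volume-plus-interior-penalty part, (ii) the consistency term $\int_{\Gamma_\calP}\mean{\mathcal{D}\nabla r_h^-}\cdot\jump{e_H}\ud s$, and (iii) the adjoint-consistency term $\theta\int_{\Gamma_\calP}\mean{\mathcal{D}\nabla e_H}\cdot\jump{r_h^-}\ud s$. Parts (i) and (iii) are routine: (i) is at most $\ndgp{r_h^-}\ndgp{e_H}$ by Cauchy--Schwarz, while (iii) is at most $\tfrac14\ndgp{e_H}\ndgp{r_h^-}$ using $|\theta|\le 1$ and the coarse inverse--trace bound \eqref{indef_est_important} (legitimate here precisely because $e_H\in V_\calP$). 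Both are then absorbed by noting, via Lemma~\ref{Lem:s} and $\alpha|_K\ge\gamma\ge 25C_{\rm equiv}$, that $\ndgp{r_h^-}^2\le C_{\rm equiv}\gamma^{-1}\norm{r_h^-}{s}^2$, i.e.\ $\ndgp{r_h^-}\le\tfrac15\norm{r_h^-}{s}$, so that (i) and (iii) together contribute at most $\tfrac14\norm{r_h^-}{s}\ndgp{e_H}$.

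The only step requiring care is (ii), and this is where the two-scale structure of the method enters: $\nabla r_h^-$ is a polynomial only on the fine submesh, so \eqref{indef_est_important} cannot be invoked for it. The resolution is to observe that $\int_{\Gamma_\calP}\mean{\mathcal{D}\nabla r_h^-}\cdot\jump{e_H}\ud s$ is precisely the quantity called $(II)$ inside the proof of Lemma~\ref{Lem:cont_twoscale}, evaluated at $z_H=e_H$; reusing that computation — a trace--inverse inequality on each boundary simplex $T\in\calT_\k^\partial$, with the submesh aspect ratio $|f|/|T|$ already built into $\alpha$ through \eqref{alpha_choice} — gives, cf.~\eqref{cool_minus},
\[
\Big|\int_{\Gamma_\calP}\mean{\mathcal{D}\nabla r_h^-}\cdot\jump{e_H}\ud s\Big|\ \le\ \gamma^{-1/2}\norm{\sqrt{\alpha}\mathcal{D}^{1/2}\bnabla r_h^-}{0,\Omega}\,\ndgp{e_H}.
\]
Applying Lemma~\ref{Lem:s} to $\sqrt{\alpha}\,r_h^-\in W_\calT$ bounds $\norm{\sqrt{\alpha}\mathcal{D}^{1/2}\bnabla r_h^-}{0,\Omega}^2\le\ndgp{\sqrt{\alpha}\,r_h^-}^2\le C_{\rm equiv}\norm{r_h^-}{s}^2$, so (ii) is at most $\gamma^{-1/2}C_{\rm equiv}^{1/2}\norm{r_h^-}{s}\ndgp{e_H}\le\tfrac15\norm{r_h^-}{s}\ndgp{e_H}$.

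Combining the three contributions, $a_{DG}(r_h^-,e_H)\le\tfrac{9}{20}\norm{r_h^-}{s}\ndgp{e_H}$, and a weighted Young inequality (tuned so that the coefficient of $\ndgp{e_H}^2$ is exactly $\tfrac14$) gives $a_{DG}(r_h^-,e_H)\le\tfrac14\ndgp{e_H}^2+\tfrac12\norm{r_h^-}{s}^2$; together with the reduction in the first paragraph this proves \eqref{mon_dec_lemma}. I expect step (ii) — recognising the $\nabla r_h^-$ consistency term as the already-analysed two-scale quantity, and thereby keeping the submesh granularity out of the penalty — to be the only genuine obstacle; the remaining estimates are bookkeeping entirely analogous to the simplicial case of \cite{BGPV24}.
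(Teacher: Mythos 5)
Your proposal is correct and follows essentially the same route as the paper's proof: the same decomposition $a_{DG}(e_H,e_H)-a_{DG}(r_h^-,e_H)+s(r_h^-,e_H)$ with $s(r_h^-,r_h^+)\ge 0$ from \eqref{eq23}, coercivity on $V_\calP$, and control of the cross term $a_{DG}(r_h^-,e_H)$ by reusing the two-scale bounds \eqref{indef_est_important} and \eqref{cool_minus} together with Lemma~\ref{Lem:s} and the choice of $\alpha$. Your constant bookkeeping ($\tfrac{9}{20}$ and the weighted Young step) is in fact slightly cleaner than the paper's, but the argument is the same.
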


\begin{proof}
Since $v_H-w_H=r_h^++r_h^-$, we obtain
\begin{equation}\label{mon_dec}
\begin{aligned}
a_{DG}(r_h^+,v_H-w_H)+s(r_h^-,v_H-w_H) 
 \ge  a_{DG}(v_H-w_H,v_H-w_H)-a_{DG}(r_h^-,v_H-w_H)+s(r_h^-,r_h^-) ,
\end{aligned}
\end{equation}
using $s(r_h^-,r_h^+)\ge 0$ from \eqref{eq23}.  
Since $v_H-w_H\in V_\mesh$, Lemma \ref{lem:coercivity_coarse} yields
\begin{equation}\label{mon_coarse_coer}
a_{DG}(v_H-w_H,v_H-w_H)\ge \frac{3}{4} \ndgp{v_H-w_H}^2.
\end{equation}

Next, arguing exactly as in the proofs of \eqref{indef_easy} and
\eqref{cool_minus}, we have
\[
\Big|\int_{\Gamma_{\calP}}\mean{\mathcal{D}\nabla( v_H-w_H)}\cdot\jump{r_h^-}  \mathrm{d}s\Big|
 \le  4^{-1} \|\mathcal{D}^{1/2}\bnabla ( v_H-w_H)\|_{0,\Omega} 
\|\sqrt{\sigma_\calP}\jump{r_h^-}\|_{0,\Gamma_{\calP}},
\]
and
\[
\Big|\int_{\Gamma_{\calP}}\mean{\mathcal{D}\nabla r_h^-}\cdot\jump{v_H-w_H}  \mathrm{d}s\Big|
 \le  \gamma^{-1/2}  \|\sqrt{\alpha} \mathcal{D}^{1/2}\bnabla r_h^-\|_{0,\Omega} 
\|\sqrt{\sigma_\calP}\jump{ v_H-w_H}\|_{0,\Gamma_{\calP}}.
\]
Applying Lemma \ref{Lem:s} to $\sqrt{\alpha} r_h^-\in W_\calT$ gives
\[
\gamma^{-1/2} \|\sqrt{\alpha} \mathcal{D}^{1/2}\bnabla r_h^-\|_{0,\Omega} \le  4^{-1} \|r_h^-\|_{s},
\]
for the stated choices of $\alpha$ and $\gamma$. Using these three
estimates together with the trivial bounds $\|\mathcal{D}^{1/2}\bnabla
v\|_{0,\Omega}\le \ndgp{v}$ and
$\|\sqrt{\sigma_\mesh}\jump{v}\|_{0,\Gamma_{\calP}}\le \ndgp{v}$ for
any $v\in W_\calT$, we obtain
\begin{equation}\label{mixed_bf}
|a_{DG}(r_h^-,v_H-w_H)|
 \le  \frac{5}{4} \ndgp{r_h^-} \ndgp{v_H-w_H}
+\frac{1}{4} \|r_h^-\|_{s} \ndgp{v_H-w_H}.
\end{equation}

Moreover, Lemma \ref{Lem:s} applied to $r_h^-\in W_\calT$ yields
\begin{equation}\label{twentyfive}
16 \ndgp{r_h^-}^2 \le  16  C_{\rm equiv} \|\alpha^{-1/2}r_h^-\|_{s}^2 \le  \|r_h^-\|_{s}^2,
\end{equation}
since $\alpha^{-1}\le \gamma^{-1}\le(16 C_{\rm equiv})^{-1}$ by
construction. Hence, from \eqref{mixed_bf} we conclude that
\[
|a_{DG}(r_h^-,v_H-w_H)|
 \le  \frac{21}{64} \|r_h^-\|_{s} \ndgp{v_H-w_H}
 <
 \frac{1}{4} \|r_h^-\|_{s}^2+\frac{1}{4} \ndgp{v_H-w_H}^2.
\]
Substituting this into \eqref{mon_dec}, together with
\eqref{mon_coarse_coer}, establishes \eqref{mon_dec_lemma}.
\end{proof}

The last two lemmas are sufficient to apply the theory of monotone
operators, thereby establishing existence and uniqueness of the
solution to \eqref{FEMethod}. In particular, we have the following
result.

\begin{theorem}[Well-posedness]
\label{Theorem11}
Let $T:V_\calP^{}\rightarrow [V_\calP^{}]'$ be defined by  
\[
[Tu_{H},v_{H}]:=a_h(u_H;v_H)=a_{DG}(\mathcal{E}^{+}(u_{H}),v_{H})+s(\mathcal{E}^{-}(u_{H}),v_{H}), \hspace{1cm} v_{H}\in V_\calP.
\]
Then, under the hypotheses of Lemma~\ref{Lem:cont_twoscale}, $T$ is
continuous and strongly monotone. Consequently, the problem
\eqref{FEMethod} has a unique solution $u_H\in V_\mesh$.
\end{theorem}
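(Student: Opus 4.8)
The plan is to verify the two hypotheses of the Browder--Minty theorem for monotone operators on the finite-dimensional space $V_\calP$: namely that $T$ is continuous and strongly monotone (hence, in particular, coercive). Since $V_\calP$ is finite-dimensional, continuity of $T$ as a map into its dual is equivalent to continuity in any fixed norm, and strong monotonicity of $T$ will directly give both coercivity (needed for surjectivity) and uniqueness of the solution. Throughout, the natural norm to work with on $V_\calP$ is $\ndgp{\cdot}$, equivalently $\big(\ndgp{\cdot}^2+\|\mathcal{E}^-(\cdot)\|_s^2\big)^{1/2}$ restricted to the appropriate objects, so that Lemmas~\ref{Lem:cont_twoscale} and \ref{Lem:mon_twoscale} apply verbatim with the substitutions suggested by their statements.

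\emph{Continuity.} First I would fix $v_H,w_H\in V_\calP$, set $r_h^\pm:=\mathcal{E}^\pm(v_H)-\mathcal{E}^\pm(w_H)$, and estimate $[Tv_H-Tw_H,z_H]=a_{DG}(r_h^+,z_H)+s(r_h^-,z_H)$ for arbitrary $z_H\in V_\calP$. By Lemma~\ref{Lem:cont_twoscale},
\[
[Tv_H-Tw_H,z_H]\le \tfrac{7}{4}\big(\ndgp{r_h^+}^2+\|r_h^-\|_s^2\big)^{1/2}\ndgp{z_H},
\]
using the trivial bound $\|z_H\|_s\le\ndgp{z_H}$ is not available directly, so instead I would retain $\big(\ndgp{z_H}^2+\|z_H\|_s^2\big)^{1/2}$ on the right and note that, by Lemma~\ref{Lem:s} (or simply finite-dimensionality on the fixed mesh), $\|z_H\|_s\le C\ndgp{z_H}$ for $z_H\in V_\calP$. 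It then remains to show $\big(\ndgp{r_h^+}^2+\|r_h^-\|_s^2\big)^{1/2}\le C\,\ndgp{v_H-w_H}$; this follows because $\mathcal{E}^+$ is Lipschitz continuous as a composition of the truncation $t\mapsto\max\{0,\min\{t,\kappa\}\}$ (which is $1$-Lipschitz) with nodal evaluation and the linear Lagrange expansion on the fixed submesh, combined with the norm equivalences of Lemma~\ref{Lem:s} and standard inverse estimates on the fixed mesh; consequently $\mathcal{E}^-$ is Lipschitz as well. Taking the supremum over $z_H$ with $\ndgp{z_H}\le1$ yields $\|Tv_H-Tw_H\|_{[V_\calP]'}\le C\ndgp{v_H-w_H}$, i.e.\ $T$ is (Lipschitz) continuous.

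\emph{Strong monotonicity.} I would compute $[Tv_H-Tw_H,v_H-w_H]=a_{DG}(r_h^+,v_H-w_H)+s(r_h^-,v_H-w_H)$ and invoke Lemma~\ref{Lem:mon_twoscale} directly to obtain
\[
[Tv_H-Tw_H,v_H-w_H]\ge \tfrac{1}{2}\big(\ndgp{v_H-w_H}^2+\|r_h^-\|_s^2\big)\ge \tfrac{1}{2}\ndgp{v_H-w_H}^2,
\]
which is precisely strong monotonicity with respect to $\ndgp{\cdot}$ on $V_\calP$. Combined with continuity, the Browder--Minty theorem (or, since $V_\calP$ is finite-dimensional, a standard consequence of Brouwer's fixed point theorem for coercive continuous monotone maps) guarantees that $T$ is a bijection onto $[V_\calP]'$; in particular there exists a unique $u_H\in V_\calP$ with $Tu_H=\ell$, i.e.\ a unique solution of \eqref{FEMethod}.

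\emph{Main obstacle.} The only non-cosmetic point is making the continuity argument fully rigorous: Lemma~\ref{Lem:cont_twoscale} controls $a_{DG}(r_h^+,z_H)+s(r_h^-,z_H)$ in terms of $\big(\ndgp{r_h^+}^2+\|r_h^-\|_s^2\big)^{1/2}$, but one still needs that this quantity is controlled by $\ndgp{v_H-w_H}$. I expect this to follow from the Lipschitz property of the nonlinear projection $\mathcal{E}^+$ at the nodal level together with the norm equivalence in Lemma~\ref{Lem:s} and inverse inequalities on the (fixed) submesh; since no uniformity in the mesh is claimed for this theorem, the constants may depend on the mesh, which is harmless for the finite-dimensional well-posedness statement. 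Strong monotonicity, by contrast, is essentially immediate from Lemma~\ref{Lem:mon_twoscale}.
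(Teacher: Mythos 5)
Your proposal is correct and follows essentially the same route as the paper: continuity via Lemma~\ref{Lem:cont_twoscale}, strong monotonicity directly from Lemma~\ref{Lem:mon_twoscale}, and the Browder--Minty theorem to conclude. If anything, you are slightly more careful than the paper on the continuity step, where you explicitly note that one still needs the Lipschitz bound $\bigl(\ndgp{r_h^+}^2+\|r_h^-\|_s^2\bigr)^{1/2}\le C\,\ndgp{v_H-w_H}$ coming from the $1$-Lipschitz nodal truncation together with the norm equivalences on the fixed submesh --- a point the paper leaves implicit.
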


\begin{proof}  
To show that $T$ is continuous, we recall \eqref{twentyfive} and use
\eqref{cont_nonlinear}, which gives
\[
[Tv_H-Tw_H,v_H-w_H]\le \frac{7}{4}\Big(\ndgp{\mathcal{E}^+(v_H)-\mathcal{E}^+(w_H)}^2+\frac{26}{25}\norm{\mathcal{E}^-(v_H)-\mathcal{E}^-(w_H)}{s}^2\Big)^{1/2} \Big(\ndgp{z_H}^2+\norm{z_H}{s}^2\Big)^{1/2}.
\]
Thus, $T$ is continuous on $V_\mesh$.  

Moreover, \eqref{mon_dec_lemma} implies
\[
[Tv_H-Tw_H,v_H-w_H] \ge \frac{1}{2}\big(\ndgp{v_H-w_H}^2+\norm{\mathcal{E}^-(v_H)-\mathcal{E}^-(w_H)}{s}^2\big),
\]
showing that $T$ is strongly monotone on $V_\mesh$. Existence and
uniqueness therefore follow from the Browder-Minty Theorem (see,
e.g., \cite[Theorem~10.41]{renardy2006introduction}) together with
strong monotonicity.
\end{proof}

\begin{Remark}[Consistency of the method]  
Suppose $u$, the solution of \eqref{CDR}, belongs to
$H^{3/2+\epsilon}(\Omega)$, $\epsilon>0$, with $0\le u\le \kappa$ in
$\Omega$. Then \eqref{wip} is consistent in the sense that
\[
a_{DG}(u,v_H) =\ell (v_H),  
\]
for all $v_H\in V_\mesh$. Combining this identity with \eqref{BP-DG}
yields the following orthogonality relation for the bound-preserving
method \eqref{BP-DG}:
\begin{equation}\label{BP-consistency}
a_{DG}(u,v_H) =\ell (v_H)=a_{DG}(\mathcal{E}^+(u_H),v_H)+s(\mathcal{E}^-(u_H),v_H),
\end{equation}
for all $v_H\in V_\mesh$. 
\end{Remark}

\section{Bound-preserving best approximation}\label{Sec:Interpol}

The error analysis requires an optimal approximation of $u$ in
$V_\calP$ that is nodally bound-preserving at all nodes of
$\calT$. Since the degrees of freedom of $V_\calP$ are determined by
the polytopic elements, rather than by the submesh $\calT$, a
specialised construction is needed, which we detail in this
section. The construction relies on the following (mild) assumption on
the partitions.
    		
\begin{assumption}[Simplicial covering of polytopic meshes]
  \label{definitio36}
For each member $\mesh_i$ of a family of polytopic meshes
$(\calP_i)_{i\in I}$, we assume there exists a covering $\meshsh_i=
\{\ksh\}$, associated with the polytopic mesh $\calP$, consisting of
open, shape-regular $d$-simplices $\ksh$ of minimal possible diameter
$H_{\ksh}$, such that:
\begin{itemize}
\item[(a)] for each $\k \in\calP_i$, there exists a fixed $\ksh\in \meshsh_i$ such that $\k \subset \ksh$;
\item[(b)] there exists a global constant $C_{\rm sh}^{\sharp}\ge 1$ such that $H_{\ksh}\le C_{\rm sh}^{\sharp} H_\k$, for all $\k\in\mesh_i$, $i\in I$;
\item[(c)] each $\meshsh_i$ covers \emph{exactly} the domain $\Omega$, that is, $\Omega = \cup_{\ksh\in\meshsh_i}\ksh$.
\end{itemize}
\end{assumption}

The simplices $\ksh$ are allowed to overlap. From Definition
\ref{definitio36}(b) and Assumption \ref{mesh_assumption}(a), we
obtain
\[
|\ksh|\le H_{\ksh}^d\le \big(C_{\rm sh}^{\sharp} H_\k\big)^d\le \big(C_{\rm sh}^{\sharp} C_{\rm star}\big)^d \rho_\k^d\le \pi^{-1}\big(C_{\rm sh}^{\sharp} C_{\rm star}\big)^d |\k|,
\]
since $\rho_\k$ is the radius of a ball contained in $\k$. Setting
$C_{\rm cov}:=\pi^{-1}\big(C_{\rm sh}^{\sharp} C_{\rm star}\big)^d$,
we deduce
\begin{equation}\label{covering_bound}
 \sum_{\ksh\in\meshsh}|\ksh|\le C_{\rm cov}\sum_{\k\in\mesh}|K|=C_{\rm cov}|\Omega|,
\end{equation}
where the correspondence between $\k$ and $\ksh$ has been used.  

This bound shows that the assumptions on the mesh $\mesh$ and on the
simplicial covering $\meshsh$ are sufficient to control the extent of
overlap in the computational domain $\Omega$ caused by the use of
$\meshsh$.
	
We consider a nodally bound-preserving approximant in
$V_{\mesh}$. While related constructions appear in
\cite{mosco1974one,strang2006dimension,ashby2024duality}, our concrete
choice follows \cite{Kirby4755186}. The next theorem states its
optimal-order approximation.

\begin{theorem}[Bound-preserving best approximation]
  \label{theorembound12}
  Let $\k \in\calP$, let $k\in\mathbb{N}$ be the polynomial degree of
  $V_\mesh$, and let $v\in H^{k+1}(\ksh)$ be a function with range in
  $[0, \kappa]$.  Then, there exists $\pi_{H}v \in V_\calP$ whose
  range is contained in $[0, \kappa]$, such that
  \begin{align}
    |v - \pi_{H}v|_{m,\k} &\leq C_mH_{\k}^{k+1-m} |v|_{k+1,\ksh},\label{123456}
  \end{align}
  for $m=0,1,2$, where $\ksh\in\meshsh$ is as in Definition
  \ref{definitio36}. The constant $C_m>0$ depends on the
  shape-regularity of $\ksh$, on $r$, and on $k$. Moreover, for
  $m=1,2$, the constant $C_m>0$ also depends on $C_{\rm star}$, on the
  upper bound $\kappa$, and on $|v|_{1,d,\Omega}$ for $m=1$, or on
  $|v|_{2,\Omega}$ for $m=2$.
\end{theorem}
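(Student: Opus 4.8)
The plan is to build $\pi_H v$ elementwise by post-processing a classical (not bound-preserving) polynomial approximation on the covering simplex through an affine rescaling that forces its range into $[0,\kappa]$ without spoiling the approximation order.

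Fix $\k\in\calP$ and let $\ksh\in\meshsh$ with $\k\subset\ksh$ be the covering simplex from Assumption~\ref{definitio36}(a). Since $\ksh$ is shape-regular and $v\in H^{k+1}(\ksh)$, let $\Pi^k v\in\mathbb{P}_k(\ksh)$ be a standard polynomial approximation of $v$ on $\ksh$ (an averaged Taylor polynomial, or the $L^2(\ksh)$-orthogonal projection), which satisfies the Bramble--Hilbert--Dupont--Scott bounds
\[
|v-\Pi^k v|_{m,\ksh}\le C\, H_{\ksh}^{k+1-m}|v|_{k+1,\ksh},\qquad m=0,1,\dots,k+1,
\]
with $C$ depending only on the shape-regularity of $\ksh$, on $k$ and on $d$. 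Since $k\ge 1$ gives $k+1>d/2$ for $d\in\{2,3\}$, the scaled Sobolev embedding $H^{k+1}(\ksh)\hookrightarrow C^0(\overline{\ksh})$ combined with these bounds yields
\[
\varepsilon_\k:=\|v-\Pi^k v\|_{0,\infty,\k}\ \le\ \|v-\Pi^k v\|_{0,\infty,\ksh}\ \le\ C\, H_{\ksh}^{k+1-d/2}|v|_{k+1,\ksh}\ \le\ C\, H_{\k}^{k+1-d/2}|v|_{k+1,\ksh},
\]
using $H_{\ksh}\le C_{\rm sh}^{\sharp}H_{\k}$ from Assumption~\ref{definitio36}(b). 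I then \emph{define}
\[
(\pi_H v)|_{\k}:=\frac{\kappa}{\kappa+2\varepsilon_\k}\bigl(\Pi^k v+\varepsilon_\k\bigr)\in\mathbb{P}_k(\k),
\]
for each $\k\in\calP$, and glue these polynomials into $\pi_H v\in V_\calP$ (no interelement continuity is required). Since $0\le v\le\kappa$ on $\k$ by \eqref{eq14} and $|\Pi^k v-v|\le\varepsilon_\k$ there, we get $0\le\Pi^k v+\varepsilon_\k\le\kappa+2\varepsilon_\k$ pointwise on $\k$, hence $0\le\pi_H v\le\kappa$ on $\k$; taking the union over $\calP$ shows the range of $\pi_H v$ is contained in $[0,\kappa]$, and in particular $\pi_H v$ is nodally bound-preserving at all nodes of $\calT$.

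For \eqref{123456}, split $v-\pi_H v=(v-\Pi^k v)+(\Pi^k v-\pi_H v)$ on $\k$. The first term is handled by the Bramble--Hilbert bound, restricted from $\ksh$ to $\k$ and combined with $H_{\ksh}\le C_{\rm sh}^{\sharp}H_\k$, giving $|v-\Pi^k v|_{m,\k}\le C H_\k^{k+1-m}|v|_{k+1,\ksh}$. For the second, the identity $\Pi^k v-\pi_H v=\tfrac{2\varepsilon_\k}{\kappa+2\varepsilon_\k}\Pi^k v-\tfrac{\kappa\varepsilon_\k}{\kappa+2\varepsilon_\k}$ shows $\nabla(\Pi^k v-\pi_H v)=\tfrac{2\varepsilon_\k}{\kappa+2\varepsilon_\k}\nabla\Pi^k v$ (and similarly for second derivatives), while $-\varepsilon_\k\le\Pi^k v\le\kappa+\varepsilon_\k$ on $\k$ gives $\|\Pi^k v-\pi_H v\|_{0,\infty,\k}\le 2\varepsilon_\k$. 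For $m=0$ this yields $\|\Pi^k v-\pi_H v\|_{0,\k}\le 2\varepsilon_\k|\k|^{1/2}\le C H_\k^{k+1}|v|_{k+1,\ksh}$ (using $|\k|\le H_\k^d$), with a $\kappa$-independent constant. For $m=1,2$ I use $\tfrac{2\varepsilon_\k}{\kappa+2\varepsilon_\k}\le\tfrac{2}{\kappa}\varepsilon_\k$ and $|\Pi^k v|_{m,\k}\le |v|_{m,\k}+|v-\Pi^k v|_{m,\k}$, the last term being again bounded by $CH_\k^{k+1-m}|v|_{k+1,\ksh}$; the leading contribution is $\tfrac{2}{\kappa}\varepsilon_\k|v|_{m,\k}$. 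To recover the correct power of $H_\k$ I insert $\varepsilon_\k\le C H_\k^{k+1-d/2}|v|_{k+1,\ksh}$ and, for $m=1$, apply Hölder's inequality on $\k$ to trade the $L^2$-norm of $\nabla v$ for its $L^d$-norm,
\[
\varepsilon_\k|v|_{1,\k}\le \varepsilon_\k\,|\k|^{\frac12-\frac1d}\,|v|_{1,d,\k}\le C H_\k^{k+1-d/2}H_\k^{d/2-1}|v|_{k+1,\ksh}|v|_{1,d,\k}=C H_\k^{k}|v|_{k+1,\ksh}|v|_{1,d,\k},
\]
whereas for $m=2$ no Hölder step is needed, since $2-d/2>0$ for $d\le 3$ gives $\varepsilon_\k|v|_{2,\k}\le C H_\k^{k-1}\bigl(H_\k^{2-d/2}|v|_{2,\k}\bigr)|v|_{k+1,\ksh}$. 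Bounding $|v|_{1,d,\k}\le|v|_{1,d,\Omega}$, $|v|_{2,\k}\le|v|_{2,\Omega}$ and $H_\k\le\diam(\Omega)$, and collecting the remainders, delivers \eqref{123456} with constants having the dependencies stated in the theorem.

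The main obstacle is precisely the interplay, for $m=1,2$, between the affine squeeze and differentiation: the rescaling multiplies the derivatives of $\Pi^k v$ by the factor $\tfrac{2\varepsilon_\k}{\kappa+2\varepsilon_\k}=O(\varepsilon_\k/\kappa)=O(H_\k^{k+1-d/2})$, which does \emph{not} carry an extra power $H_\k$, whereas $\|\nabla\Pi^k v\|_{0,\k}$ and $\|D^2\Pi^k v\|_{0,\k}$ do not vanish as $H_\k\to 0$; recovering the optimal rates $H_\k^{k}$ and $H_\k^{k-1}$ therefore forces one to extract the missing factor $H_\k^{d/2-1}$ (for $m=1$) from $|\k|^{1/2-1/d}$ via Hölder's inequality, which is exactly why it is the \emph{global} $W^{1,d}$ (resp.\ $H^2$) seminorm of $v$, rather than a purely local quantity, that enters the constant. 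All remaining steps are routine combinations of the Bramble--Hilbert lemma, the scaled Sobolev embedding and the covering properties of $\meshsh$ recorded in \eqref{covering_bound}.
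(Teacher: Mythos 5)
Your proposal is correct and follows essentially the same route as the paper: your rescaling $(\pi_Hv)|_\k=\tfrac{\kappa}{\kappa+2\varepsilon_\k}(\Pi^kv+\varepsilon_\k)$ is algebraically identical to the paper's map $\tfrac{\kappa}{2}+\beta_\kappa(v)(i_Hv-\tfrac{\kappa}{2})$ with $\beta_\kappa(v)=\kappa(\kappa+2\|v-i_Hv\|_{0,\infty,\k})^{-1}$, and the error analysis (splitting off the squeeze perturbation, H\"older's inequality to trade $\|\nabla v\|_{0,\k}$ for $\|\nabla v\|_{0,d,\k}$ for $m=1$, and the $H_\k^{2-d/2}$ factor for $m=2$) mirrors the paper's. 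The only cosmetic differences are that you use a generic Bramble--Hilbert projector with the explicit $H_\ksh^{k+1-d/2}$ form of the $L^\infty$ estimate instead of the Lagrange interpolant, which lets you treat $k=1$ uniformly rather than as a separate case.
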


\begin{proof}
  Let $k\ge 2$ and let $i_{H}:H^2(\ksh)\to \mathbb{P}_k(\ksh)$ denote
  the Lagrange interpolation operator, producing the polynomial of
  total degree $k$ interpolating $v$ at the Lagrange nodes of the
  simplex $\ksh$. If, in addition, $v\in H^{k+1}(\ksh)$, then the
  standard best-approximation estimate holds:
  \begin{equation}
    \norm{v- i_{H}v}{0,\ksh}+H_{\ksh} |v-i_{H}v|_{1,\ksh}+H_{\ksh}^2 |v-i_{H}v|_{2,\ksh}+|\ksh|^{1/2}\norm{v- i_{H}v}{0,\infty,\ksh}\le  \tilde{C}_{\rm app} H_{\ksh}^{k +1}|v |_{k +1,\ksh}, 
    \label{lagranges}
  \end{equation} 
  for each $\k\in\mesh$, with $\tilde{C}_{\rm app}>0$ independent of
  $H_{\ksh}$ and $v$; see \cite[Theorem 3.1.4]{ciarlet}. Using the
  properties in Definition \ref{definitio36}, this yields
  \begin{equation}\label{lagrange}
    \norm{v- i_{H}v}{0,\k}+H_{\k} |v-i_{H}v|_{1,\k}+H_{\k}^2 |v-i_{H}v|_{2,\k}+|\k|^{1/2}\norm{v- i_{H}v}{0,\infty,\k}\leq  C_{\rm app} H_{\k}^{k+1}|v |_{k+1,\ksh},
  \end{equation} 
  with $C_{\rm app}:= \tilde{C}_{\rm app} \big(C_{\rm
    sh}^{\sharp}\big)^{k+1}$.
  
  By construction, the nodal values of $i_{H}v$ on $\ksh$ lie in
  $[0,\kappa]$ since $v$ itself does, but the full range of $i_{H}v$
  is not necessarily contained in $[0,\kappa]$. Consequently,
  $i_{H}v|_\k$ does not, in general, preserve bounds.
  
  To obtain an elementwise polynomial approximant of $v$ that is
  bound-preserving, we follow the construction in \cite[Theorem
    2]{Kirby4755186} for simplicial meshes. For each $\k\in \mesh$,
  define
  \begin{align*}
    (\pi_{H}v)|_\k :=\frac{\kappa}{2} + \beta_\kappa(v) \left(i_{H}v - \frac{\kappa}{2}\right), \quad\text{with}\quad \beta_\kappa(v):=\kappa\big(\kappa+2\norm{v -i_{H}v}{0,\infty,K}\big)^{-1}.
  \end{align*}
  Since $\pi_{H}v $ is obtained by linear operations and $V_\mesh$
  contains constants, it follows that $ \pi_{H}v \in V_\mesh$. To
  prove that its range is contained in $[0, \kappa]$, note first that
  any function $w$ with range in $[0,\kappa]$ satisfies
\begin{align}
\left\|w -\frac{\kappa}{2}\right\|_{0,\infty,K} \leq  \frac{\kappa}{2}.\label{inequalit32456}
\end{align}
Therefore,
\[
\left\|\pi_{H}v  - \frac{\kappa}{2}\right\|_{0,\infty,\k} = \beta_\kappa(v) \left\|i_{H}v - \frac{\kappa}{2}\right\|_{0,\infty,\k} \le \frac{\kappa}{2},
\]
since
\[
\left\|i_{H}v - \frac{\kappa}{2}\right\|_{0,\infty,\k}\le \norm{i_{H}v -v}{0,\infty,\k}+\norm{v - \frac{\kappa}{2}}{0,\infty,\k}\le \norm{i_{H}v -v}{0,\infty,\k}+\frac{\kappa}{2}.
\]

Thus $0\le \pi_{H}v\le \kappa$. We now estimate its approximation properties. From
\begin{equation}\label{interp_id}
	v - 	\pi_{H}v = \Big( v - \frac{\kappa}{2} \Big)(1-\beta_\kappa(v))+\beta_\kappa(v) ( v -i_{H}v ),
\end{equation}
the triangle inequality, \eqref{inequalit32456}, and \eqref{lagrange} yield
\begin{equation}\label{bound345}
	\norm{v - \pi_{H}v}{0,K} 
	\leq \sqrt{|\k|}\|v -i_{H}v\|_{0,\infty,K} + \|v - i_{H}v\|_{0,K}
	\leq  2C_{\rm app} H_\k^{r+1}|v|_{r+1,\ksh}.
\end{equation}

For the $H^1$-seminorm estimate, differentiating \eqref{interp_id},
taking norms, and applying the triangle inequality gives
\begin{align}
	\norm{\nabla (v - 	\pi_{H}v)}{0,\k} \le (1-\beta_\kappa(v))\norm{\nabla v}{0,\k}+\beta_\kappa(v) \norm{\nabla( v -i_{H}v )}{0,\k}. 
\end{align}
By H\"older’s inequality, $ \norm{\nabla v}{0,\k}\le |\k|^{(d-2)/(2d)}
\norm{\nabla v}{0,d, \k}, $ for $d=2,3$. Since $H^{k+1}(\ksh)$, $k\ge
2$, embeds continuously into $W^{1,d}(\ksh)$, the above combined with
\eqref{lagrange} gives
\[
\norm{\nabla (v - 	\pi_{H}v)}{0,\k} 
\le C_{\rm app}\big(2\kappa^{-1}|\k|^{-1/d}H_\k\norm{\nabla v}{0,d, \k} +1\big) H_\k^{k}|v|_{k+1,\ksh}.
\]
From Assumption \ref{mesh_assumption}(a), $ |\k|^{-1/d}H_\k\le C_{\rm
  star}^{-1}, $ so the result follows with
\[
C_1:=C_{\rm app}(2C_{\rm star}^{-1}\kappa^{-1}\norm{\nabla v}{0,d, \k}
+1).
\]
For the $H^2$-seminorm estimate, a similar argument yields
\[
|v - 	\pi_{H}v|_{2,\k} 
\le C_{\rm app}\big(2\kappa^{-1}|\k|^{-1/2}H_\k^2| v|_{2,\k} +1\big) H_\k^{k-1}|v|_{k+1,\ksh}.
\]
From Assumption \ref{mesh_assumption}(a), $ |\k|^{-1/2}H_\k^2\le
C_{\rm star}^{-d/2}H_\k^{2-d/2}, $ and the result follows with
\[
C_2:=C_{\rm app}( 2C_{\rm star}^{-d/2}\kappa^{-1}|v|_{2,
  \k}H_\k^{2-d/2} +1).\]

This completes the proof for $k\ge 2$. For $k=1$, we can set
$\pi_Hv=i_Hv$, since the range of linear Lagrange basis functions over
a simplex is contained in $[0,1]$. Standard interpolation estimates
apply in this case, with $C_2=1$.
\end{proof}

\section{\emph{A priori} error analysis}\label{sec:Error}

We are now in position to establish \emph{a priori} error bounds between
sufficiently smooth exact solutions $u$ of \eqref{CDR} and their nodally
bound-preserving approximations $\mathcal{E}^+(u_H)$ obtained through
\eqref{BP-DG}. 

\begin{theorem}[A priori error estimate]
  \label{thm:a_priori}
Let $u\in H^1_0(\Omega)$ be the solution to \eqref{CDR}, and let
$\mathcal{E}^+(u_H)\in W_\calT$ denote the nodally bound-preserving
approximation produced by \eqref{BP-DG}, with $\sigma_\mesh$ given by
\eqref{penalty} and $\alpha$ by \eqref{alpha_choice}, computed on
families of polytopic meshes and corresponding simplicial subdivisions
satisfying Assumptions \ref{mesh_assumption},
\ref{mesh_assumption_two}, and \ref{definitio36}. If, in addition,
$u\in H^{k+1}(\Omega)\cap H^1_0(\Omega)$, then the following error bound
holds:
\begin{equation}\label{a_priori_bound}
		\ndgp{u-\mathcal{E}^+(u_H)}\le  C_{\rm apr}\bigg(\sum_{\k\in\mesh}\big(\delta_{\omega_\k}+ H_\k^2\mu_\k\big)
		H_{\k}^{2k} |u|_{k+1,\ksh}^2\bigg)^{1/2},
\end{equation} 
where $\mu_\k:=\|\mu\|_{0,\infty,\k}$, with $\{\ksh\}$ denoting the
simplicial covering from Assumption~\ref{definitio36}, and where
$C_{\rm apr}>0$ depends only on the polynomial degree
$k\in\mathbb{N}$ and on the constants appearing in Assumptions
\ref{mesh_assumption}, \ref{mesh_assumption_two}, and \ref{definitio36}.
\end{theorem}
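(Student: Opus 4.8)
The plan is to proceed along the lines of a standard Céa-type argument adapted to the nonlinear, composite setting. Let $\pi_H u\in V_\calP$ be the bound-preserving best approximation from Theorem~\ref{theorembound12}; since its range lies in $[0,\kappa]$, we have $\mathcal{E}^+(\pi_H u)=\pi_H u$ and $\mathcal{E}^-(\pi_H u)=0$. Set $e_H:=\mathcal{E}^+(u_H)-\pi_H u\in W_\calT$, noting $e_H = (\mathcal{E}^+(u_H)-\mathcal{E}^+(\pi_H u))$. The starting point is the monotonicity estimate of Lemma~\ref{Lem:mon_twoscale} applied with $v_H=u_H$, $w_H=\pi_H u$: writing $r_h^\pm=\mathcal{E}^\pm(u_H)-\mathcal{E}^\pm(\pi_H u)$, it gives
\[
\tfrac12\big(\ndgp{u_H-\pi_Hu}^2+\|r_h^-\|_s^2\big)\le a_{DG}(r_h^+,u_H-\pi_Hu)+s(r_h^-,u_H-\pi_Hu).
\]
The right-hand side is then rewritten using the consistency relation \eqref{BP-consistency}, namely $a_{DG}(\mathcal{E}^+(u_H),z_H)+s(\mathcal{E}^-(u_H),z_H)=a_{DG}(u,z_H)$ for all $z_H\in V_\mesh$, taking $z_H=u_H-\pi_Hu$. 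This converts the right-hand side into $a_{DG}(u-\pi_Hu, u_H-\pi_Hu)$ (using $\mathcal{E}^+(\pi_Hu)=\pi_Hu$, $\mathcal{E}^-(\pi_Hu)=0$), which is a linear-in-both-arguments DG term involving only the interpolation error $u-\pi_Hu$ and the discrete function $u_H-\pi_Hu$.

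Next I would bound $|a_{DG}(u-\pi_Hu, u_H-\pi_Hu)|$ by the standard DG continuity estimate in a suitable augmented norm: the volume terms are controlled by $\ndgp{u-\pi_Hu}_\ast\,\ndgp{u_H-\pi_Hu}$, where the augmented norm $\ndgp{\cdot}_\ast$ on $u-\pi_Hu$ includes, in addition to the DG-norm, the lift/consistency term $\|\sigma_\calP^{-1/2}\mean{\mathcal{D}\nabla(u-\pi_Hu)}\|_{0,\Gamma_\calP}$ needed to handle the fluxes $\mean{\mathcal{D}\nabla(u-\pi_Hu)}\cdot\jump{u_H-\pi_Hu}$ — this requires the full $H^{k+1}$-regularity of $u$ and is estimated face-by-face via the trace inequality \eqref{local_trace_theorem} together with \eqref{123456} for $m=0,1$. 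Absorbing $\ndgp{u_H-\pi_Hu}$ into the left-hand side via Young's inequality yields $\ndgp{u_H-\pi_Hu}\le C\ndgp{u-\pi_Hu}_\ast$. Finally, a triangle inequality $\ndgp{u-\mathcal{E}^+(u_H)}\le\ndgp{u-\pi_Hu}+\ndgp{\pi_Hu-\mathcal{E}^+(u_H)}=\ndgp{u-\pi_Hu}+\ndgp{u_H-\pi_Hu}$ combines the two, and inserting the elementwise estimates from Theorem~\ref{theorembound12} — namely $|u-\pi_Hu|_{m,\k}\le C_m H_\k^{k+1-m}|u|_{k+1,\ksh}$ for $m=0,1,2$, summed over $K$ with the diffusion/reaction weights $\delta_{\omega_\k}$ and $H_\k^2\mu_\k$ reconstructed from the definition of $\sigma_\calP$ in \eqref{penalty} — produces \eqref{a_priori_bound}. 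The finite-overlap bound \eqref{covering_bound} guarantees that summing the $|u|_{k+1,\ksh}^2$ over the covering $\meshsh$ stays controlled by $\|u\|_{k+1,\Omega}^2$.

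The main obstacle I anticipate is the treatment of the flux terms $\int_{\Gamma_\calP}\mean{\mathcal{D}\nabla(u-\pi_Hu)}\cdot\jump{u_H-\pi_Hu}\,\mathrm ds$: because $\pi_Hu$ is only piecewise polynomial on the \emph{submesh} $\calT$ rather than globally on $\calP$, one must be careful that the trace estimates are applied on the simplices $T\in\calT_K^\partial$ (using Assumption~\ref{mesh_assumption_two}(b), $h_T\le\min H_{\k_*}$, so that the penalty weight $\sigma_\calP$, calibrated to the coarse mesh only, still dominates the relevant trace-inverse constants) exactly as in the proof of Lemma~\ref{Lem:cont_twoscale}. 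A secondary subtlety is that the constant $C_1$ in Theorem~\ref{theorembound12} carries a dependence on $|u|_{1,d,\Omega}$ (and $C_2$ on $|u|_{2,\Omega}$); since the statement only claims dependence of $C_{\rm apr}$ on $k$ and the mesh constants, one either absorbs these into $|u|_{k+1,\Omega}$ via the Sobolev embedding $H^{k+1}\hookrightarrow W^{1,d}$ for $k\ge 2$ (and $H^{k+1}\hookrightarrow H^2$), or states the higher-order bounds ($m=1,2$) with the implied additional dependence; for the DG-norm estimate \eqref{a_priori_bound}, only $m=0,1$ enter and the $W^{1,d}$-embedding handles the $m=1$ term cleanly.
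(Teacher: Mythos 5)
Your proposal follows essentially the same route as the paper's proof: decompose the error through the bound-preserving interpolant $\pi_H u$ of Theorem~\ref{theorembound12} (so that $\mathcal{E}^\pm(\pi_Hu)=\pi_Hu$, $0$), apply the strong monotonicity of Lemma~\ref{Lem:mon_twoscale} with $v_H=u_H$, $w_H=\pi_Hu$, use the consistency relation \eqref{BP-consistency} to turn the right-hand side into $a_{DG}(u-\pi_Hu,\,u_H-\pi_Hu)$, bound this by continuity in an augmented norm, and finish with a triangle inequality and \eqref{123456}. Two steps, however, are incorrect as written, although both can be repaired with material you already have in hand.

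First, the equality $\ndgp{\pi_Hu-\mathcal{E}^+(u_H)}=\ndgp{u_H-\pi_Hu}$ in your final step is false unless $\mathcal{E}^+(u_H)=u_H$: in general $\pi_Hu-\mathcal{E}^+(u_H)=(\pi_Hu-u_H)+\mathcal{E}^-(u_H)$, so the term $\ndgp{\mathcal{E}^-(u_H)}$ must also be controlled. This is precisely why the monotonicity estimate retains $\|r_h^-\|_s^2=\|\mathcal{E}^-(u_H)\|_s^2$ on its left-hand side: Lemma~\ref{Lem:s} together with the lower bound on $\alpha$ (cf.~\eqref{twentyfive}) controls $\ndgp{\mathcal{E}^-(u_H)}$ by a small multiple of $\|\mathcal{E}^-(u_H)\|_s$, and the paper performs exactly this absorption in \eqref{young}; without it the proof as written does not close. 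Second, your claim that only the $m=0,1$ cases of \eqref{123456} are needed is not right: estimating the flux term $\norm{\sigma_\calP^{-1/2}\mean{\mathcal{D}\nabla(u-\pi_Hu)}}{0,\Gamma_\calP}$ by the trace inequality \eqref{local_trace_theorem} applied to $\nabla(u-\pi_Hu)$ produces $|u-\pi_Hu|_{2,\k}$, so the $m=2$ estimate enters (this is why the paper's constant $c_2$ involves $C_2$). The alternative of avoiding $m=2$ would require a different treatment of that term (e.g.\ inserting a polynomial and using a trace-inverse inequality), which you do not carry out. With these two corrections your argument coincides with the paper's.
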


\begin{proof}
We begin by decomposing the error $e:=u-\mathcal{E}^+(u_H)$ as
\[
e=\big(u-\pi_H u\big) +\big( \pi_H u - \mathcal{E}^+(u_H)\big) =: \eta+\xi,
\]
noting that $\xi\in W_\calT$. The triangle inequality gives
$\ndgp{e}\le \ndgp{\eta}+\ndgp{\xi}$.

We now estimate $\xi\in W_\calT$. Observe that
$\xi = \mathcal{E}^+(\pi_H u)-\mathcal{E}^+(u_H)$, since
$0\le \pi_H u\le \kappa$.  
Applying Lemma~\ref{Lem:mon_twoscale}, we obtain
\begin{equation}\label{coer_xi}
  \begin{aligned}
    a_{DG}(\xi,\pi_H u-u_H)-s(\mathcal{E}^-(u_H),\pi_H u-u_H) \ge \tfrac{1}{2}\big(\ndgp{\pi_H u-u_H}^2+\norm{\mathcal{E}^-(u_H)}{s}^2\big).
  \end{aligned}
\end{equation}

Next, the modified consistency relation \eqref{BP-consistency} implies
\begin{equation}\label{GO2}
	a_{DG}(\xi,v_H) -s(\mathcal{E}^-(u_H),v_H)=-a_{DG}(\eta,v_H),
\end{equation}
and inserting this into \eqref{coer_xi} yields
\begin{equation}\label{coercivity_erroran}
\frac{1}{2}\big(\ndgp{\pi_H u-u_H}^2+\norm{\mathcal{E}^-(u_H)}{s}^2\big)\le  -a_{DG}(\eta,\pi_H u-u_H). 
\end{equation}

For brevity, set $\xi_H:=\pi_H u-u_H$. We now estimate the right-hand
side of \eqref{coercivity_erroran}. A straightforward estimate gives
\begin{equation}\label{cont_DG_error}
  |a_{DG}(\eta,\xi_H)|\le \ndgp{\eta}\ndgp{\xi_H}+\int_{\Gamma_\mesh}|\mean{\mathcal{D}\nabla \eta}| |\jump{\xi_H}|\ud s +\int_{\Gamma_\mesh}|\mean{\mathcal{D}\nabla \xi_H}| |\jump{\eta}|\ud s. 
\end{equation}
We treat the last two terms of \eqref{cont_DG_error} separately. For
the last term we proceed as in \eqref{indef_easy} to obtain
\[
\int_{\Gamma_\mesh}|\mean{\mathcal{D}\nabla \xi_H}| |\jump{\eta}|\ud s \le \tfrac{1}{4}\norm{\mathcal{D}^{1/2}\bnabla \xi_H}{0,\Omega}\norm{\sqrt{\sigma_\mesh}\jump{\eta}}{0,\Gamma_\mesh}.
\]

To estimate the term involving $\eta$, we proceed as in the proof of
Lemma~\ref{lem:coercivity_coarse}. More precisely, every face $F$ shared
by two elements $\k_+,\k_-\in\mesh$ is divided into simplices $\{ f\}$.
For each $f\subset F$ we recall the simplex $\k_*^f\subset \k_*$,
$*\in\{+,-\}$, with face $f$ and opposite vertex the centre of the ball
with respect to which $\k_*$ is star-shaped. Application of the local
trace theorem \eqref{local_trace_theorem} in each $\k_*^f$ yields
\begin{align}
	\sum_{F\subset \partial K_*}\norm{v}{0,F}^2
&\le\ C_{\rm tr} \sum_{F\subset \partial K_*}\sum_{f\subset F}\big(H_{\k_*}^{-1} \norm{v}{0,\k_*^f}^2+ H_{\k_*} | v|_{1,\k_*^f}^2 \big)\nonumber\\
&\le\ C_{\rm tr} \big(H_{\k_*}^{-1} \norm{v}{0,\k_*}^2+ H_{\k_*} | v|_{1,\k_*}^2 \big),\label{trace_theorem}
\end{align}
with $C_{\rm tr}$ depending only on the star-shapedness constant
$C_{\rm star}$ from Assumption~\ref{mesh_assumption}(a). We have also
used that
$\cup_{F\subset \partial\k_*}\cup_{f\subset F}\k_*^f=\k_*$ by
construction (see, e.g., \cite[Lemma 4.7]{DG-EASE} for a proof of the
trace theorem with explicit $C_{\rm tr}$).

Hence, \eqref{trace_theorem} implies
\[
\norm{\sqrt{\sigma_\mesh}\jump{\eta}}{0,\Gamma_\mesh}^2
\le 
\tilde{c}_1\sum_{\k\in\mesh} \delta_{\omega_\k}\big(
H_{\k}^{-2}\norm{\eta}{0,\k}^2+|\eta|_{1,\k}^2\big), 
\]
with  $\tilde{c}_1:= 8 C_{\rm tr}C_{\rm star}k(k-1+d)d^{-1}$.
Theorem~\ref{theorembound12} then gives
\begin{equation}\label{pen_one_error}
\begin{aligned}
	\norm{\sqrt{\sigma_\mesh}\jump{\eta}}{0,\Gamma_\mesh}^2
	\le &\ 
	c_1\sum_{\k\in\mesh} \delta_{\omega_\k}H_{\k}^{2k} |v|_{k+1,\ksh}^2  
	,
\end{aligned}
\end{equation}
with $c_1=\tilde{c}_1\max\{C_0^2,C_1^2\}$.

Now consider the second term on the right-hand side of
\eqref{cont_DG_error}, for which
\[
\int_{\Gamma_\mesh}|\mean{\mathcal{D}\nabla \eta}| |\jump{\xi_H}|\ud s \le \norm{\sigma_\mesh^{-1/2}\mean{\mathcal{D}\nabla \eta}}{0,\Gamma_\mesh}\norm{\sqrt{\sigma_\mesh}\jump{\xi_H}}{0,\Gamma_\mesh}.
\]
We estimate the term involving $\eta$ as follows:
\[
\begin{aligned}
	\norm{\sigma_\mesh^{-1/2}\mean{\mathcal{D}\nabla \eta}}{0,\Gamma_\mesh}^2 
	\le &\ 
\tilde{c}_2	\sum_{F\subset \Gamma_\mesh}\sum_{*\in\{+,-\}}\delta_{\k_*}^{-1}\norm{\mathcal{D}}{0,\infty,\k_*}^2\big( |\eta|_{1,\k_*^F}^2+H_{\k_*}^2 |\eta|_{2,\k_*^F}^2\big)\\
	\le &\ 
\tilde{c}_2	\sum_{\k\in\mesh}\norm{\mathcal{D}^{-1}}{0,\infty,\k}^{-1}\big( |\eta|_{1,\k}^2+H_{\k}^2 |\eta|_{2,\k}^2\big),
\end{aligned}
\]
for $\tilde{c}_2:=\big(8C_{\rm star}k(k-1+d)\big)^{-1}d^2 C_{\rm
  tr}(1+\max_{\k\in\mesh}H_\k)$, using the trace theorem
\eqref{trace_theorem} with $v=|\nabla \eta|_{\k_*}|$, the definition
of $\delta_{\k_*}$, and collecting contributions to each $\k_*$. Since
$\norm{\mathcal{D}^{-1}}{0,\infty,\k}^{-1}\le
\norm{\mathcal{D}}{0,\infty,\k}$, application of
Theorem~\ref{theorembound12} yields
\begin{equation}\label{pen_two_error}
\begin{aligned}
	\norm{\sigma_\mesh^{-1/2}\mean{\mathcal{D}\nabla \eta}}{0,\Gamma_\mesh}^2 
	\le &\ 
	c_2	\sum_{\k\in\mesh}\norm{\mathcal{D}}{0,\infty,\k}H_{\k}^{2k} |u|_{k+1,\ksh}^2,  
\end{aligned}
\end{equation}
with $c_2:=\tilde{c}_2\max\{C_1^2,C_2^2\}$.

Finally, using \eqref{pen_one_error} and standard estimates also yields
\begin{equation}\label{P_norm_error}
	\ndgp{\eta}^2\le \sum_{\k\in\mesh}\big(C_1^2 \norm{\mathcal{D}}{0,\infty,\k}+C_0^2 H_\k^2\|\mu\|_{0,\infty,\k}+c_1\delta_{\omega_\k}\big)H_{\k}^{2k} |u|_{k+1,\ksh}^2.
\end{equation}
Using \eqref{pen_one_error}, \eqref{pen_two_error}, and
\eqref{P_norm_error} to further estimate \eqref{cont_DG_error}, together
with the discrete Cauchy--Schwarz inequality, gives
\begin{equation}\label{error_xi_H}
  \begin{aligned}
    |a_{DG}(\eta,\xi_H)|
    \le &\
    2\big(
    \ndgp{\eta}^2
    +	\norm{\sigma_\mesh^{-1/2}\mean{\mathcal{D}\nabla \eta}}{0,\Gamma_\mesh}^2  
    +  \norm{\sqrt{\sigma_\mesh}\jump{\eta}}{0,\Gamma_\mesh}^2
    \big)^{1/2} \ndgp{\xi_H}\\
    \le&\ c_3\bigg(\sum_{\k\in\mesh}\big(\delta_{\omega_\k}+ H_\k^2\mu_\k\big)
    H_{\k}^{2k} |u|_{k+1,\ksh}^2\bigg)^{1/2} \ndgp{\xi_H},
  \end{aligned}
\end{equation}
with $c_3:=2(\max\{2c_1+c_2+C_1^2, C_0^2\})^{1/2}$ and
$\mu_\k:=\|\mu\|_{0,\infty,\k}$.

Returning now to \eqref{coercivity_erroran}, we use \eqref{error_xi_H}
(along with the inequality $ab\le a^2+b^2/4$ on the right-hand side of
\eqref{error_xi_H}) to deduce
\begin{equation}\label{first_bound_xi_H}
\frac{1}{4}\ndgp{\xi_H}^2+\frac{1}{2}\norm{\mathcal{E}^-(u_H)}{s}^2\le  c_3^2\sum_{\k\in\mesh}\big(\delta_{\omega_\k}+ H_\k^2\mu_\k\big)
H_{\k}^{2k} |u|_{k+1,\ksh}^2.
\end{equation}

From this estimate, we can obtain an upper bound for $\ndgp{\xi}$. Since
$\xi_H=\xi-\mathcal{E}^-(u_H)$, the triangle inequality and
\eqref{twentyfive} give 
\begin{equation}\label{young}
	\begin{aligned}
		\ndgp{\xi_H}^2\ge&\  \big(\ndgp{\xi}-\ndg{\mathcal{E}^-(u_H)}\big)^2\ge  \big(\ndgp{\xi}-5^{-1}\norm{\mathcal{E}^-(u_H)}{s}\big)^2\\
		\ge&\  \ndgp{\xi}^2 +\tfrac{1}{25}\norm{\mathcal{E}^-(u_H)}{s}^2 -\tfrac{2}{5} \ndgp{\xi}\norm{\mathcal{E}^-(u_H)}{s}
		\ge \tfrac{4}{5}\ndgp{\xi}^2 -\tfrac{4}{25}\norm{\mathcal{E}^-(u_H)}{s}^2, 
	\end{aligned}
\end{equation}
using $2ab\le a^2/5+5b^2$ in the last step. Combining
\eqref{first_bound_xi_H} with \eqref{young} gives
\begin{equation}\label{second_bound_xi}
	\ndgp{\xi}^2\le  5c_3^2\sum_{\k\in\mesh}\big(\delta_{\omega_\k}+ H_\k^2\mu_\k\big)
	H_{\k}^{2k} |u|_{k+1,\ksh}^2,
\end{equation}
where we have ignored the non-negative term
$\norm{\mathcal{E}^-(u_H)}{s}^2$. A final application of the triangle
inequality completes the proof.
\end{proof}

\begin{Remark}[Further consequences of the \emph{a priori} analysis]
  \phantom{Some remarks}
  \begin{enumerate}
  \item The appearance of the covering simplices $\ksh$ on the
    right-hand side of \eqref{a_priori_bound} does not affect the
    inferred convergence order. Indeed, estimating
    \eqref{a_priori_bound} from above and using \eqref{covering_bound}
    yields
    \[
    \ndgp{u-\mathcal{E}^+(u_H)}\le  C \big(\max_{\k\in\mesh} H_\k^k\big) |u|_{k+1,\Omega},
    \]
    where $C>0$ depends on $C_{\rm apr}$, $C_{\rm cov}$,
    $\mathcal{D}$, and $\mu$ only.
    
  \item The proof of Theorem \ref{thm:a_priori} also provides an
    \emph{a priori} error bound for the `non-compliant' approximation
    $u_H\in V_\mesh$, which may not be nodally
    bound-preserving. Specifically, from \eqref{first_bound_xi_H} we
    obtain a bound on $\ndgp{\pi_Hu-u_H}$, which, combined with
    \eqref{P_norm_error}, leads to
    \[
    \ndgp{u-u_H}\le  \tilde{C}_{\rm apr}\bigg(\sum_{\k\in\mesh}\big(\delta_{\omega_\k}+ H_\k^2\mu_\k\big)
    H_{\k}^{2k} |v|_{k+1,\ksh}^2\bigg)^{1/2},
    \] 
    for some $\tilde{C}_{\rm apr}>0$ with the same dependence on
    constants as $C_{\rm apr}>0$.
    
  \item The bound \eqref{first_bound_xi_H} also sheds light on the
    decay rate of $\mathcal{E}^-(u_H)\to 0$, i.e., the `non-compliant'
    component of the approximation, as $\max_{\k\in\mesh}H_\k\to 0$
    and/or $\max_{T\in\calT}h_T\to 0$.
    
    For simplicity, assume $\calT$ is quasi-uniform with
    representative submesh diameter $h$ and that $\mesh$ is
    quasi-uniform with representative element diameter $H$. From
    \eqref{alpha_choice}, each elemental component is proportional to
    $H h^{-1}$.
		
    Consider first the case $H$ fixed and $h\to 0$. Then the
    right-hand side of \eqref{first_bound_xi_H} is bounded by $C_{\rm
      up}H^k$, for some $C_{\rm up}>0$ depending on $u$. Using the
    definition of the stabilisation norm and $\alpha\sim H h^{-1}$, we
    obtain
    \[
    \begin{aligned}
      C_{\rm up}H^{2k}\ge \norm{\mathcal{E}^-(u_H)}{s}^2\ge&\ C_{\rm down}H h^{-3}\sum_{\k\in\mesh}\sum_{T\in\calT_\k}\sum_{i=1}^{m_{k,d}} h^{d}\big(\mathcal{E}^-(u_H)(\bx_i^T)\big)^2\sim C_{\rm down}H h^{-3} \|\mathcal{E}^-(u_H)\|_{0,\Omega}^2,
    \end{aligned}
    \]
    for $C_{\rm down}>0$ depending on $\mathcal{D}_0$. This shows that
    $\|\mathcal{E}^-(u_H)\|_{0,\Omega}\sim h^{3/2}$ for fixed
    $H$. Moreover, since $h^{-3}\ge H^{-3}$, we find
    $\|\mathcal{E}^-(u_H)\|_{0,\Omega}\sim H^{k+1}$, which is of
    higher order than what one might expect from
    \eqref{first_bound_xi_H}.
  \end{enumerate}
\end{Remark}

%

\section{Implementation and matrix structure}\label{Sec:Implementation}

The composite method \eqref{BP-DG} can be realised by a transformation
starting from the discontinuous Galerkin (DG) formulation posed on the
simplicial submesh $\calT$. We first introduce the discontinuous
polynomial space
\begin{align*}
  V_\calT := \bigl\{ v_{h}\in L^{2}(\Omega): v_{h}|_{T}\in \mathbb{P}_{k}(T)
  \quad \forall\, T\in \calT \bigr\}.
\end{align*}
On this space, the classical interior-penalty DG method reads: find
$u_{h}\in V_\calT$ such that
\begin{align}
  a_{\mathrm{DG}}^{\calT}(u_{h},v_{h})
  =
  \ell(v_{h}) \qquad \forall\, v_{h}\in V_\calT, \label{DG121}
\end{align}
where the bilinear form $a_{\mathrm{DG}}^{\calT}:
(H^{3/2+\epsilon}_0(\Omega)+V_\calT)\times(H^{3/2+\epsilon}_0(\Omega)+V_\calT)\to\mathbb{R}$ is
\begin{equation}
  \begin{split}
    a_{\mathrm{DG}}^{\calT}(u_{h},v_{h})
    &=
    \int_{\Omega}\!\Big(\mathcal{D} \nabla_{\calT} u_{h}\cdot\nabla_{\calT} v_{h}
    + \mu u_{h}v_{h}\Big)\,\mathrm{d}\boldsymbol{x}
    \\
    &\quad
    +
    \int_{\Gamma_\calT}\!\sigma_\mesh \jump{u_h}\cdot \jump{v_h}\,\mathrm{d}s
    -
    \int_{\Gamma_{\calT}}\!\Big(\mean{\mathcal{D}\nabla u_h}\cdot \jump{v_h}
    + \theta \,\mean{\mathcal{D}\nabla v_h}\cdot \jump{u_h}\Big)\,\mathrm{d}s, \label{DG11}
  \end{split}
\end{equation}
and $\ell(v_h):=\langle f,v_h\rangle_{\Omega}$. Here, $\theta\in[-1,1]$
is the usual IPDG parameter, $\sigma_\mesh$ the penalty parameter
(cf.~\eqref{penalty}), and $\Gamma_\calT=\cup_{T\in\calT}\partial T$ the
skeleton of $\calT$.

Next, recall the conforming space $W_\calT$ defined in
Section~\ref{sec2.2}. On this space we seek $w_{h}\in W_\calT$ such
that
\begin{equation}
  a_{\#}(w_{h},v_{h}) = \ell(v_{h}) \qquad \forall\, v_{h}\in W_\calT, \label{DG22} 
\end{equation}
with bilinear form
\begin{equation}
  \begin{split}
    a_{\#}(w_{h},v_{h})
    &=
    \int_\Omega \Big(\mathcal{D}\nabla_{\calT} w_{h}\cdot \nabla_{\calT} v_h
    + \mu w_{h}v_h\Big)\,\mathrm{d}x 
    +
    \int_{\Gamma_\mesh}\!\sigma_\mesh \jump{w_{h}}\cdot \jump{v_h}\,\mathrm{d}s
    \\
    &\quad
    -
    \int_{\Gamma_{\mesh}}\!\Big(\mean{\mathcal{D}\nabla w_{h}}\cdot \jump{v_h}
    + \theta \,\mean{\mathcal{D}\nabla v_h}\cdot \jump{w_{h}}\Big)\,\mathrm{d}s.
  \end{split}
\end{equation}

The spaces $V_\calT$ and $W_\calT$ are related through their local
bases. Specifically, each basis function $\phi_{i}^{K}$ of $W_{K}$,
for $K\in \calP$, can be expressed as a linear combination of basis
functions $\varphi_{l}^{K}$ of $V_{\calT}$:
\begin{align}
	\phi_{i}^{K}=\sum_{l=1}^{N_{K}^{i}}  \varphi_{l}^{K}, \qquad i=1,\ldots,N_{K},
\end{align}
where $N_{K}$ is the number of degrees of freedom of $W_{K}$,
$\mathbf{x}_{i}^{K}$ the node associated with $\phi_{i}^{K}$,
$N^{i}_{K}$ the number of coincident degrees of freedom in $V_\calT$,
and $\varphi_{l}^{K}$ the corresponding basis functions.

From this relation it follows that $a_{\#}$ can be written in terms of
$a_{\mathrm{DG}}^{\calT}$:
\begin{align}
  a_{\#}(\phi_{i}^{K_{1}},\phi_{j}^{K_{2}})
  =
  \sum_{k=1}^{N^{j}_{K_{2}}}\sum_{l=1}^{N^{i}_{K_{1}}}a_{\mathrm{DG}}^{\calT}(\varphi_{l}^{K_{1}},\varphi_{k}^{K_{2}}), 
  \qquad K_{1},K_{2}\in \calP. \label{Projection1}
\end{align}
The same holds for penalty contributions: if
$\Gamma_\mesh\cap\Gamma_\calT=\varnothing$ the jumps vanish, giving
\begin{align}
  \int_{\Gamma_\mesh} \sigma_\mesh\jump{\phi_{i}^{K_{1}}}\cdot\jump{\phi_{j}^{K_{2}}}\,\mathrm{d}s
  =
  \sum_{k=1}^{N^{j}_{K_{2}}}\sum_{l=1}^{N^{i}_{K_{1}}}\int_{\Gamma_{\calT}}\sigma_\mesh \jump{\varphi_{l}^{K_{1}}}\cdot\jump{\varphi_{k}^{K_{2}}}\,\mathrm{d}s. \label{penalty11}
\end{align}

Let $N_{\calT} := \dim(V_\calT)$ and $N_{\#} := \dim(W_\calT)$. Then
\eqref{Projection1} shows that $a_{\mathrm{DG}}^{\calT}$ and $a_{\#}$
are connected by a transformation matrix
$\mathbf{O}\in\mathbb{R}^{N_{\#}\times N_{\calT}}$:
\begin{align}
  \mathbf{A}_{\#}
  =
  \mathbf{O}\,\mathbf{A}_{\mathrm{DG}}\,\mathbf{O}^{T}, \label{Transformation}
\end{align}
where $[\mathbf{A}_{\#}]_{ij}=a_{\#}(\phi_{i},\phi_{j})$ is the
stiffness matrix for \eqref{DG22}, and
$[\mathbf{A}_{\mathrm{DG}}]_{ij}=a_{\mathrm{DG}}^{\calT}(\varphi_{i},\varphi_{j})$
the stiffness matrix for \eqref{DG121}. The algebraic problem
\eqref{DG22} can therefore be written: find
$\mathbf{W}\in \mathbb{R}^{N_{\#}}$ such that
\begin{align}
  \mathbf{A}_{\#}\mathbf{W}=\mathbf{O}\,\mathbf{A}_{\mathrm{DG}}\,\mathbf{O}^{T}\mathbf{W}= \mathbf{O}\mathbf{b},
\end{align}
where $\mathbf{b}\in \mathbb{R}^{N_{\calT}}$ has entries
$b_{i}=\langle f,\varphi_{i}\rangle_{\Omega}$.

Let $V_{\calP}$ be the global dG space defined in
(\ref{eq:dgspace}). Its basis functions $\psi_{j}^{K}\in V_{\calP}$
can be written in terms of $\{\phi_{i}^{K}\}$:
\begin{align}
  \psi_{j}^{K}=\sum_{i=1}^{N_{K}}a_{i}^{K}\phi_{i}^{K}. \label{Projection11}
\end{align}
This yields
\begin{align}
  a_{\rm DG}(\psi_{i}^{K_{1}},\psi_{j}^{K_{2}})
  = \sum_{l=1}^{N_{K_{2}}}\sum_{k=1}^{N_{K_{1}}}a_{l}^{K_{1}}a_{k}^{K_{2}}
  a_{\#}(\phi_{l}^{K_{1}},\phi_{k}^{K_{2}}), 
  \qquad K_{1},K_{2}\in \calP. \label{Transformation23}
\end{align}
Introducing
$\mathbf{Q}\in\mathbb{R}^{N_{\calP}\times N_{\#}}$ as the matrix
mapping $W_\calT$ to $V_\calP$, the global stiffness matrix is
\begin{align}
  \mathbf{A}_{\calP}=\mathbf{Q}\,\mathbf{A}_{\#}\,\mathbf{Q}^{T}. \label{Transformation2}
\end{align}
The block diagonal structure of $\mathbf{Q}$ is shown in
Figure~\ref{K}. Combining \eqref{Transformation} and
\eqref{Transformation2}, the algebraic system for the positivity-preserving R-FEM is
\begin{align}
  \mathbf{A}_{\calP}(\mathbf{Q}\mathbf{W}^{+}) + \mathbf{S}(\mathbf{Q} \mathbf{W}^{-})
  = \bigl(\mathbf{Q}\mathbf{O}\mathbf{A}_{\rm DG}\mathbf{O}^{T}\mathbf{Q}^{T}\bigr)(\mathbf{Q}\mathbf{W}^{+})
  + \mathbf{S}(\mathbf{Q}\mathbf{W}^{-})
  =  \mathbf{F}_\calP, \label{R-FEM_Algebric}
\end{align}
with $\mathbf{F}_\calP= \mathbf{Q}\mathbf{O}\mathbf{b}$. The
stabilisation matrix $\mathbf{S}$ is defined via $\calT$ as
\begin{align*}
  s_{\calT}(v_h,u_h)=\sum_{\k\in\mesh}\alpha\sum_{T\in\calT_\k}\sum_{i=1}^{m_{k,d}}
  \big(\mathcal{D}_{\omega_\k} h_T^{d-2}+\mu_T h_T^{d} \big)v_{h}(\boldsymbol{x}_{i})u_{h}(\boldsymbol{x}_{i}), 
  \qquad u_{h},v_{h}\in V_{\calT},
\end{align*}
with $\alpha$ from \eqref{alpha_choice}. Its matrix form is
\begin{align*}
  \mathbf{S}(\mathbf{Q} \mathbf{W}^{-})=
  \bigl(\mathbf{O}\mathbf{Q}\mathbf{S}_{\calT}\mathbf{Q}^{T}\mathbf{O}^{T}\bigr)(\mathbf{Q}\mathbf{W}^{-}).
\end{align*}

Figure~\ref{meshs} shows a representative polytopic mesh together with
two associated triangular submeshes obtained by connecting quadrature
nodes within each polytope: one with 198 triangles (baseline
quadrature) and a finer one with 1626 triangles. Figure
\ref{Fig4Transportation} shows the structure of the transformation
meshes over this representative mesh with Figure \ref{FigSMAT} showing
the structure of the stiffness matrices.

\begin{figure}[h!]
  \centering
  \subfloat[Polytopic mesh with 32 elements.]{\label{M3}
    \includegraphics[width=0.3\textwidth]{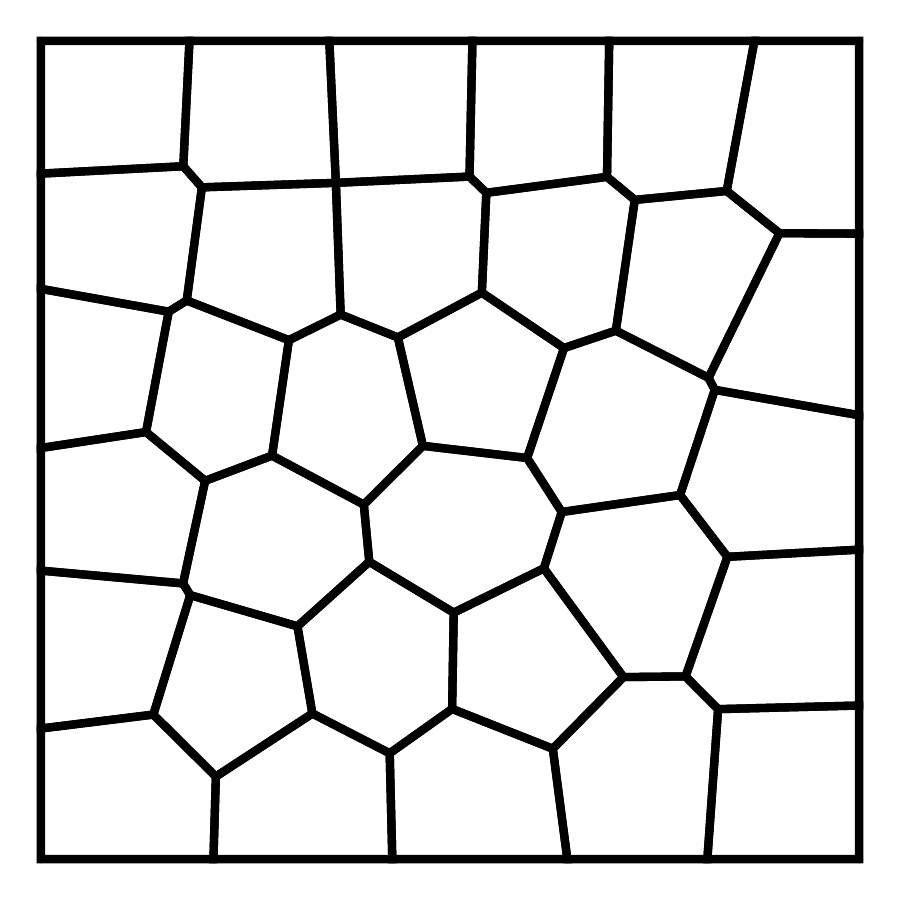}}
  \hfill
  \subfloat[Triangular submesh of Mesh~\ref{M3} with 198 elements.]{\label{M3triangulation}
    \includegraphics[width=0.3\textwidth]{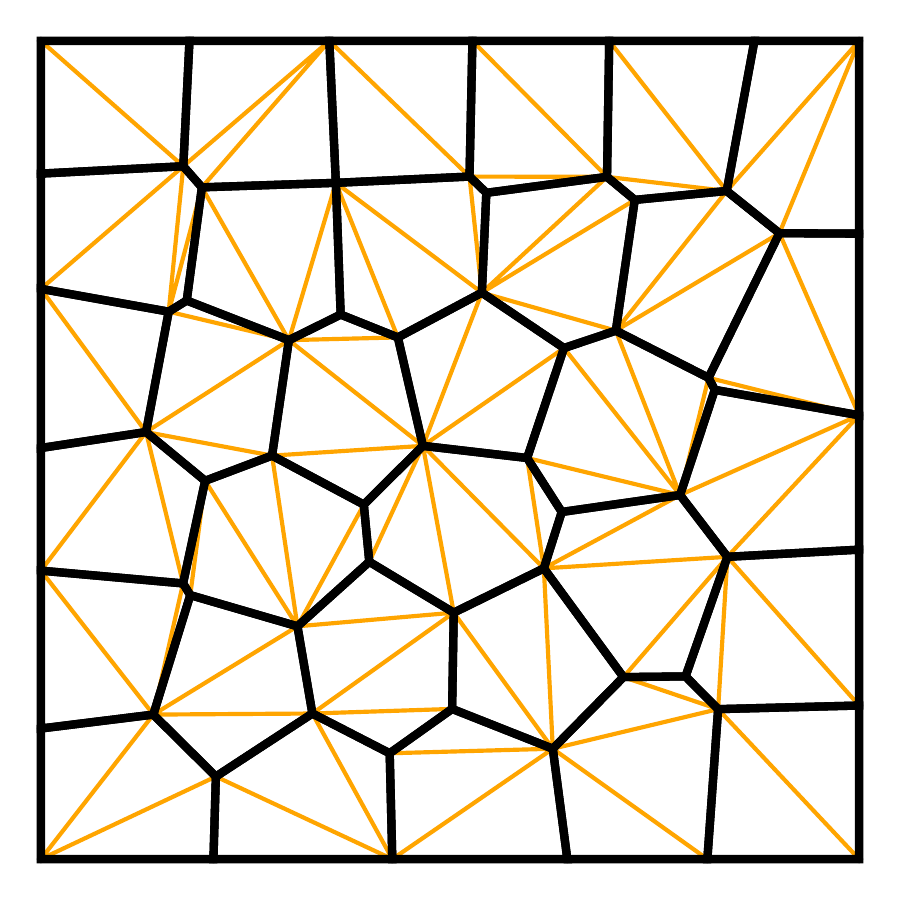}}
  \hfill
  \subfloat[Finer triangular submesh of Mesh~\ref{M3} with 1626 elements.]{\label{M3triangulationFine}
    \includegraphics[width=0.3\textwidth]{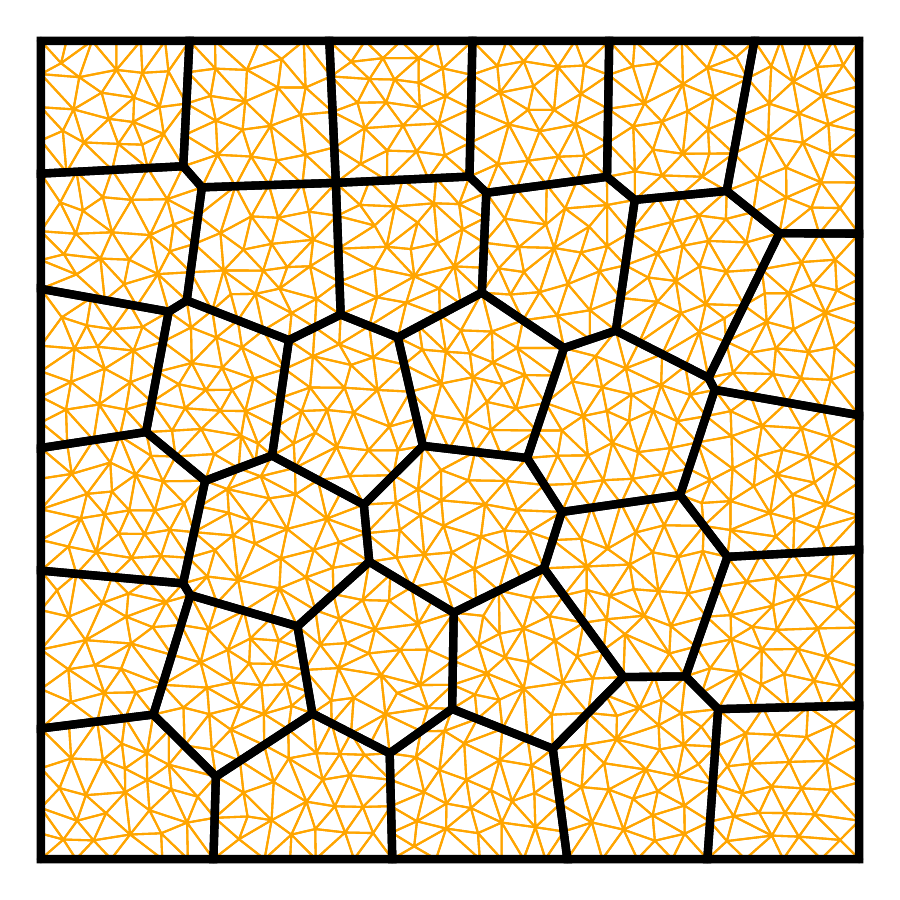}}
  \caption{Representative polytopic mesh and two associated triangular submeshes at different quadrature levels.}
  \label{meshs}
\end{figure}

\begin{figure}[h!]
  \centering
  \subfloat[$\mathbf{O}\in \mathbb{R}^{216\times  594}$]{\label{O} \includegraphics[width=0.5\textwidth]{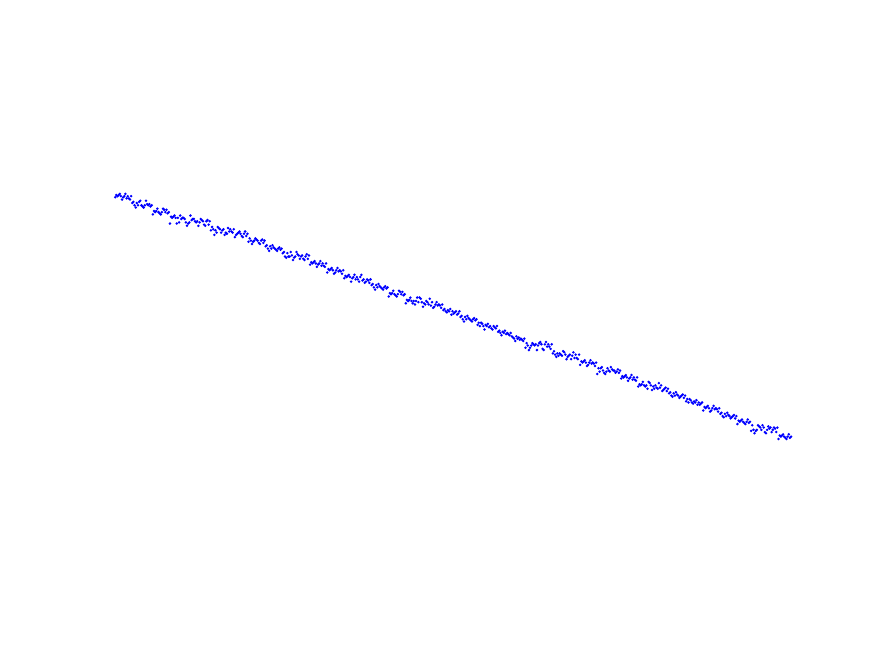}}
  \subfloat[$\mathbf{Q} \in  \mathbb{R}^{96\times216}$]{\label{K} \includegraphics[width=0.5\textwidth]{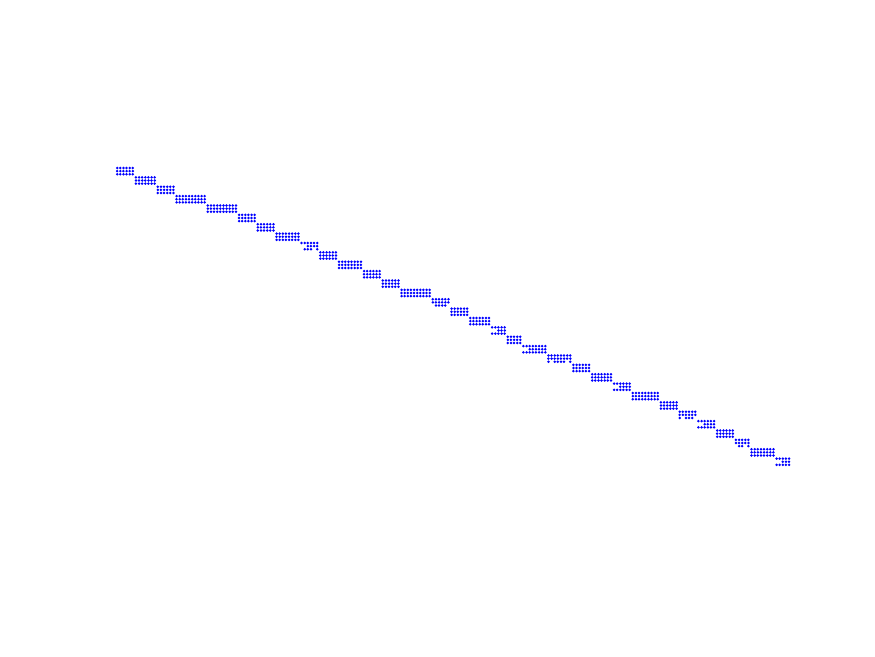}}
  \caption{Structure of the transformation matrices $\mathbf{O}$ and
    $\mathbf{Q}$ on Mesh~\ref{M3} with $\mathbb{P}_{1}$ elements.}
  \label{Fig4Transportation}
\end{figure}

\begin{figure}[h!]
  \centering
  \subfloat[$\mathbf{A}_{\rm DG}\in \mathbb{R}^{594\times594}$]{ \includegraphics[width=0.35\textwidth]{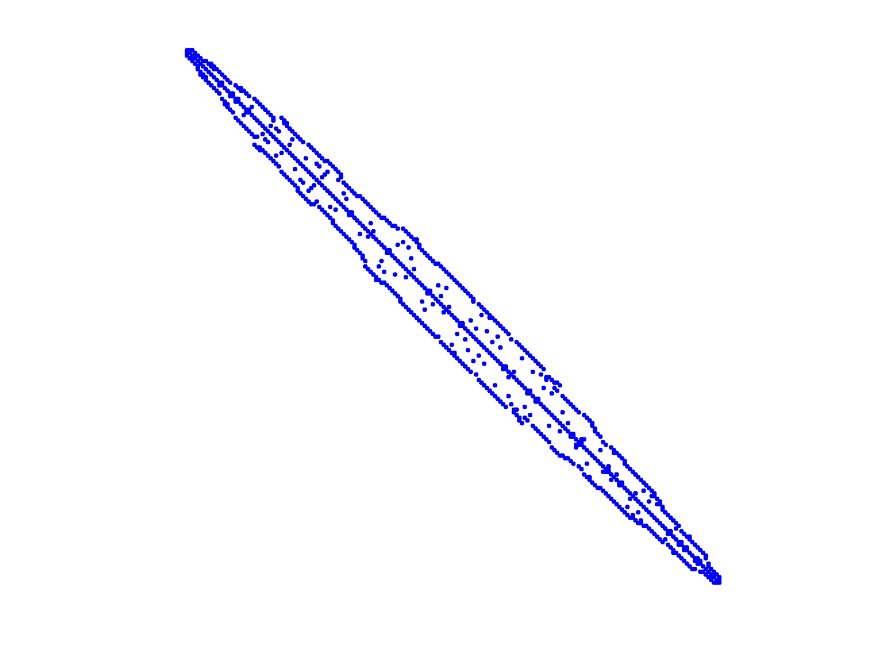}}
  \subfloat[$\mathbf{A}_{\#}\in \mathbb{R}^{216 \times216}$]{ \includegraphics[width=0.35\textwidth]{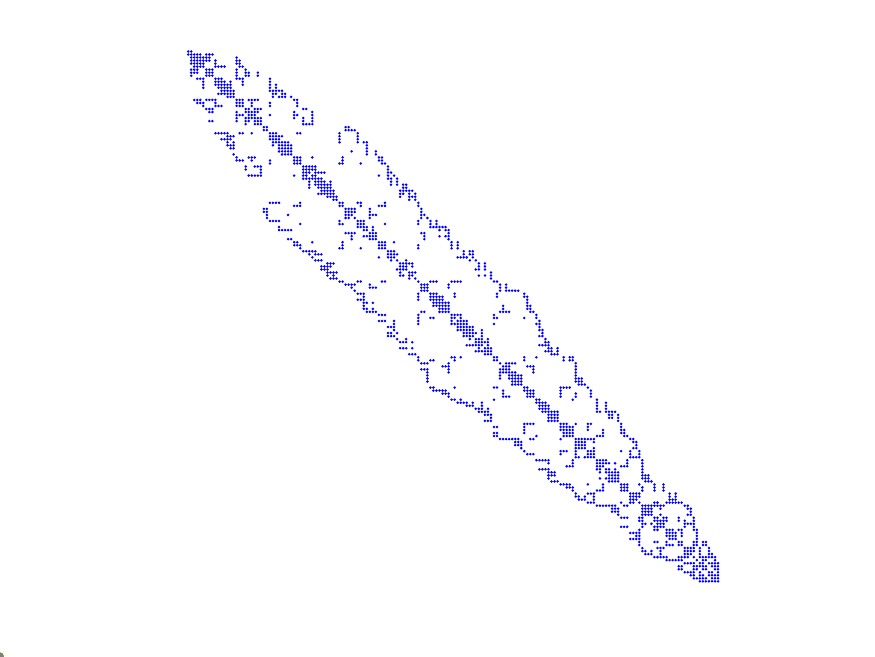}}
  \subfloat[$\mathbf{A}_{\calP}\in \mathbb{R}^{96\times96}$ ]{ \includegraphics[width=0.35\textwidth]{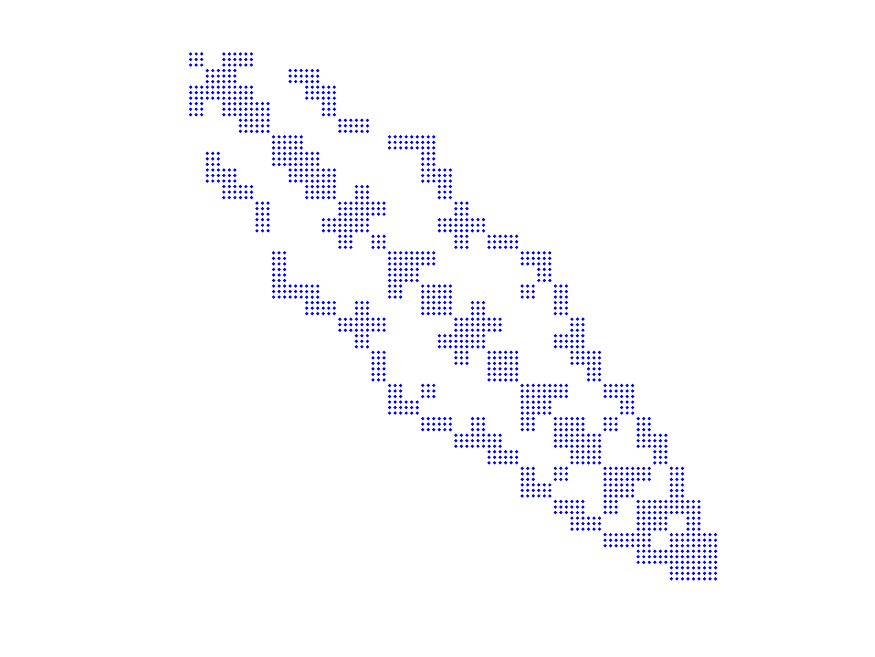}}
  \caption{Structure of the stiffness matrices $\mathbf{A}_{\rm DG}$,
    $\mathbf{A}_{\#}$ and $\mathbf{A}_{\calP}$ on Mesh~\ref{M3} with
    $\mathbb{P}_{1}$ elements.}\label{FigSMAT}
\end{figure}


\section{Numerical experiments}	
\label{sec:numerics}

We assess the performance of the finite element method \eqref{BP-DG}
on the domain $\Omega=(0,1)^2$, using $\gamma=1$
in \eqref{alpha_choice} throughout.

The nonlinear system from \eqref{FEMethod} is solved in two
steps. First, compute $w_h^0\in W_\calT$ from
\begin{equation}
  a_{\#}(w_{h}^{0},v_{h})=\langle f, v_{h}\rangle_{\Omega} \qquad \forall v_{h}\in W_\calT,
  \label{iter}
\end{equation}
and let $\mathbf{W}^0=(w_1^0,\ldots,w_{N_{\#}}^0)$ be the vector of
$W_\calT$-dofs. Using the algebraic form \eqref{R-FEM_Algebric}, for
$n=0,1,2,\ldots$ we update via a semi-smooth Newton method
(cf.~\cite{argyros1988newton})
\begin{equation}
  \mathbf{U}^{(n+1)}=\mathbf{Q}\mathbf{W}^{(n)}-J_{\mathbf{F}}(\mathbf{U}^{(n)})^{-1}\,\mathbf{F}(\mathbf{W}^{(n)}),
  \label{Newton}
\end{equation}
where $\mathbf{F}(\mathbf{W})=\mathbf{F}_\calP-\mathbf{A}_\calP\,\mathbf{Q}(\mathbf{W})^{+}-\mathbf{S}\,\mathbf{Q}(\mathbf{W})^{-}$ and
\begin{equation}
  J_{\mathbf{F}}(\mathbf{U})=J_{\mathbf{F}}(\mathbf{W})\mathbf{Q}^T
  =-\Big(\mathbf{A}_\calP\mathbf{Q}\,\mathrm{diag}(I^{1}(w_1),\ldots,I^{1}(w_{N_{\#}}))
  +\mathbf{S}\mathbf{Q}\,\mathrm{diag}(I^{2}(w_1),\ldots,I^{2}(w_{N_{\#}}))\Big)\mathbf{Q}^{T},
  \label{Jacobi}
\end{equation}
with indicator functions
\[
I^1(w_i)=\begin{cases}0,& w_i<0,\\ 1,& w_i\ge 0,\end{cases}
\qquad
I^2(w_i)=\begin{cases}0,& w_i\in[0,\kappa],\\ 1,& \text{otherwise},\end{cases}
\quad i=1,\ldots,N_{\#}.
\]
The relation \eqref{Jacobi} follows from
$J_{\mathbf{F}}(\mathbf{W})\mathbf{Q}^{T}\mathbf{Q}\mathbf{W}=J_{\mathbf{F}}(\mathbf{U})\mathbf{U}$. Iterations
terminate when
\begin{equation}
  \|\mathcal{E}(u_H^{n+1})-\mathcal{E}(u_H^{n})\|_{0,\Omega}\le 10^{-8},
  \label{stopping}
\end{equation}
where $\mathcal{E}(u_H^n)=w_h^n=\sum_{i=1}^{N_{\#}}W_i^n\phi_i$.

We report both asymptotic convergence and nonlinear solver
behaviour. Let $\#\calP$ be the number of polytopic elements. Given
consecutive meshes with $\#\calP_1$ and $\#\calP_2$ elements, the
estimated order of convergence (EOC) is
\begin{equation}
  \mathrm{EOC}
  = \frac{\log\!\left(\|e_{1}\|/\|e_{2}\|\right)}
         {\log\!\left(\sqrt{\#\calP_1}/\sqrt{\#\calP_2}\right)},
  \label{OOC}
\end{equation}
where $\|e_i\|$ is the error on the mesh with $\#\calP_i$ elements. In all plots and error calculations we display $\mathcal{E}^{+}(u_H)$.

\begin{example}[Convergence for a smooth solution]\label{Example1}
We take $\mu=1$ and
\[
\mathcal{D}=\epsilon\begin{bmatrix}100&\cos(x)\\ \cos(x)&1\end{bmatrix},\qquad \epsilon=10^{-6},
\]
with forcing $f$ chosen so that $u(x,y)=\sin(c\pi x)\sin(c\pi y)$ (hence $u\in[0,1]$ and $\kappa=1$).
\end{example}

We use $\mathbb{P}_1$, $\mathbb{P}_2$, and $\mathbb{P}_3$ elements on
the meshes of Figure~\ref{meshs}. Tables~\ref{Tab1}--\ref{Tab3} report
$L^2$, broken $H^1$, and DG errors of $u-\mathcal{E}^+(u_H)$, together
with $\|\mathcal{E}^-(u_H)\|_s$ and Newton iteration counts. The EOC
is computed via \eqref{OOC}.

\begin{table}[H]
  \centering
  \resizebox{\textwidth}{!}{ 
  \begin{tabular}{|c|c|c|c|c|c|c|c|c|c|}
  \hline
  $\#\calP$ & Itr. & $\|u-\mathcal{E}^{+}(u_{H}) \|_{0,\Omega}$ & EOC & $|u-\mathcal{E}^{+}(u_{H})|_{1,\Omega}$ & EOC & $\|u-\mathcal{E}^{+}(u_{H}) \|_{\rm DG}$ & EOC & $\|\mathcal{E}^{-}(u_{H})\|_{s}$ & EOC \\
  \hline\hline
   8 & 3 & 5.11e-1 & -- & 1.80e-2 & -- & 5.11e-1 & -- & 0 & -- \\
  16 & 3 & 5.22e-1 & -- & 1.79e-2 & -- & 5.22e-1 & -- & 0 & -- \\ 
  32 & 3 & 5.21e-1 & -- & 1.80e-2 & -- & 5.22e-1 & -- & 0 & -- \\ 
  64 & 3 & 3.98e-1 & -- & 1.73e-2 & 0.11 & 4.09e-1 & 0.70 & 0 & -- \\ 
  128 & 3 & 2.98e-1 & 0.83 & 1.48e-2 & 0.45 & 3.24e-1 & 0.67 & 0 & -- \\  
  256 & 10 & 1.73e-1 & 1.57 & 1.16e-2 & 0.70 & 2.15e-1 & 1.18 & 9.07e-3 & -- \\  
  512 & 7 & 8.76e-2 & 1.96 & 8.41e-3 & 0.93 & 1.27e-1 & 1.52 & 1.63e-2 & -- \\  
  1024 & 7 & 4.33e-2 & 2.03 & 5.97e-3 & 0.99 & 7.71e-2 & 1.44 & 1.30e-3 & 7.29 \\      
  \hline
  \end{tabular}}
  \caption{Convergence results for Example~\ref{Example1} using $\mathbb{P}_{1}$ and $c=8$.}
  \label{Tab1}
\end{table}

\begin{table}[H]
  \centering
  \resizebox{\textwidth}{!}{ 
  \begin{tabular}{|c|c|c|c|c|c|c|c|c|c|}
  \hline
  $\#\calP$ & Itr. & $\|u-\mathcal{E}^{+}(u_{H}) \|_{0,\Omega}$ & EOC & $|u-\mathcal{E}^{+}(u_{H})|_{1,\Omega}$ & EOC & $\|u-\mathcal{E}^{+}(u_{H}) \|_{\rm DG}$ & EOC & $\|\mathcal{E}^{-}(u_{H})\|_{s}$ & EOC \\
  \hline\hline
   8 & 3 & 5.17e-1 & -- & 1.81e-2 & -- & 5.17e-1 & -- & 0 & -- \\
  16 & 3 & 4.94e-1 & -- & 1.80e-2 & -- & 4.96e-1 & -- & 0 & -- \\ 
  32 & 3 & 4.20e-1 & 0.47 & 1.69e-2 & -- & 4.40e-1 & 0.35 & 0 & -- \\ 
  64 & 9 & 2.32e-1 & 1.71 & 1.28e-2 & 0.80 & 2.70e-1 & 1.41 & 3.73e-3 & -- \\ 
  128 & 10 & 1.12e-1 & 1.88 & 7.96e-3 & 1.37 & 1.35e-1 & 2.00 & 4.12e-2 & -- \\  
  256 & 13 & 4.41e-2 & 2.91 & 4.45e-3 & 1.68 & 5.93e-2 & 2.37 & 1.58e-2 & 2.26 \\  
  512 & 13 & 1.34e-2 & 3.43 & 2.33e-3 & 1.87 & 2.12e-2 & 2.96 & 3.12e-3 & 5.18 \\  
  1024 & 14 & 4.67e-3 & 3.04 & 1.12e-3 & 2.11 & 9.66e-3 & 2.27 & 1.63e-3 & 1.87 \\   
  \hline
  \end{tabular}}
  \caption{Convergence results for Example~\ref{Example1} using $\mathbb{P}_{2}$ and $c=8$.}
  \label{Tab2}
\end{table}

\begin{table}[H]
  \centering
  \resizebox{\textwidth}{!}{ 
  \begin{tabular}{|c|c|c|c|c|c|c|c|c|c|}
  \hline
  $\#\calP$ & Itr. & $\|u-\mathcal{E}^{+}(u_{H}) \|_{0,\Omega}$ & EOC & $|u-\mathcal{E}^{+}(u_{H})|_{1,\Omega}$ & EOC & $\|u-\mathcal{E}^{+}(u_{H}) \|_{\rm DG}$ & EOC & $\|\mathcal{E}^{-}(u_{H})\|_{s}$ & EOC \\
  \hline\hline
   8 & 3 & 5.12e-1 & -- & 1.82e-2 & -- & 5.18e-1 & -- & 0 & -- \\
  16 & 9 & 4.44e-1 & -- & 1.74e-2 & -- & 5.41e-1 & -- & 2.06e-2 & -- \\ 
  32 & 11 & 2.83e-1 & 1.30 & 1.32e-2 & 0.80 & 3.18e-1 & 1.53 & 4.54e-2 & -- \\ 
  64 & 13 & 1.04e-1 & 2.99 & 7.48e-3 & 1.64 & 1.45e-1 & 2.27 & 3.03e-2 & 1.17 \\ 
  128 & 14 & 2.72e-2 & 3.77 & 2.85e-3 & 2.78 & 3.47e-2 & 4.13 & 4.25e-3 & 5.67 \\  
  256 & 17 & 6.52e-3 & 4.12 & 1.04e-3 & 2.91 & 9.56e-3 & 3.71 & 2.73e-3 & 1.28 \\  
  512 & 13 & 1.73e-3 & 3.82 & 3.61e-4 & 3.05 & 2.72e-3 & 3.15 & 4.01e-4 & 5.53 \\  
  1024 & 14 & 4.31e-4 & 4.01 & 1.27e-4 & 3.01 & 8.00e-4 & 3.53 & 5.71e-5 & 5.62 \\   
  \hline
  \end{tabular}}
  \caption{Convergence results for Example~\ref{Example1} using $\mathbb{P}_{3}$ and $c=8$.}
  \label{Tab3}
\end{table}

The results show optimal rates for all degrees. Additionally,
refinement leads to a monotone decay of $\|\mathcal{E}^-(u_H)\|_s$, as
expected. Figure~\ref{fig:ex1plots} displays $\mathcal{E}^{+}(u_{H})$
for $c=8$ on a mesh with $\#\calP=512$ using $\mathbb{P}_{1}$
elements.

\begin{figure}[h!]
  \centering
  \subfloat[Top-down view.]{\includegraphics[width=0.35\textwidth]{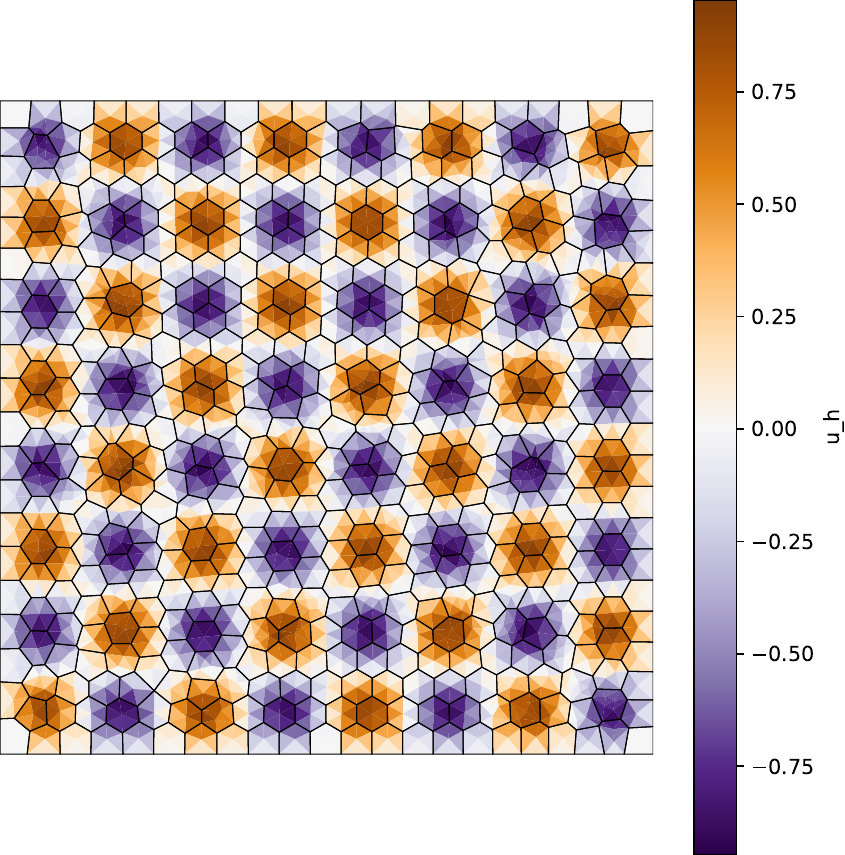}}
  \hspace{0.05\textwidth}
  \subfloat[Surface plot.]{\includegraphics[width=0.35\textwidth]{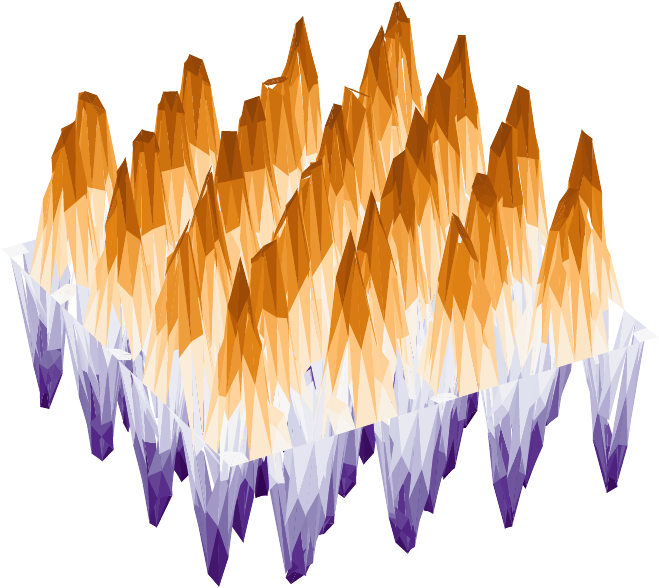}}
  \caption{Discrete solution $\mathcal{E}^{+}(u_{H})$ for Example~\ref{Example1} with $c=8$ on a mesh with $\#\calP=512$ using $\mathbb{P}_{1}$ elements.}
  \label{fig:ex1plots}
\end{figure}

\begin{example}[Resolution of boundary layers]\label{Example22}
We consider
\begin{align}
\left\{
\begin{array}{ll}
-\epsilon \Delta u + \mu u = 1, & \text{in } \Omega, \\
u = 0, & \text{on } \partial\Omega,
\end{array}
\right.
\label{Example2}
\end{align}
with $\epsilon\in[10^{-6},10^{-2}]$ and $\mu=1$. Since $u(x)\in[0,1]$, we take $\kappa=1$.
\end{example}

We report results for $\mathbb{P}_1$ and
$\mathbb{P}_2$. Figure~\ref{Fig6} shows $\mathcal{E}^{+}(u_{H})$ for
decreasing $\epsilon$; Table~\ref{Tab7} lists Newton iteration counts
for the stopping criterion \eqref{stopping}. As $\epsilon$ decreases,
boundary layers sharpen while the scheme remains stable and
convergent.

\begin{figure}[h!]
  \centering
  \subfloat[$\epsilon=10^{-2}$, top-down.]{\includegraphics[width=0.35\textwidth]{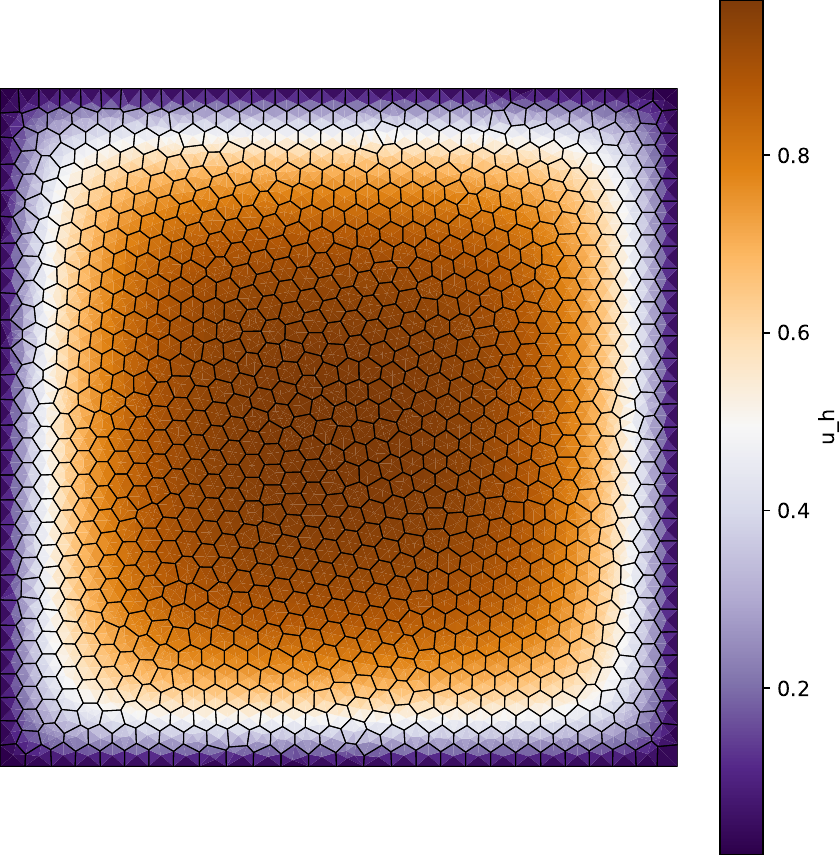}}
  \hspace{0.05\textwidth}
  \subfloat[$\epsilon=10^{-2}$, surface.]{\includegraphics[width=0.35\textwidth]{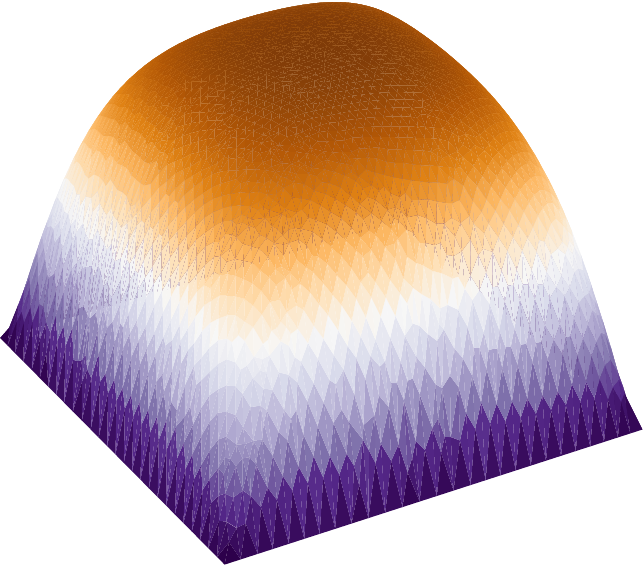}}\\[1ex]
  \subfloat[$\epsilon=10^{-4}$, top-down.]{\includegraphics[width=0.35\textwidth]{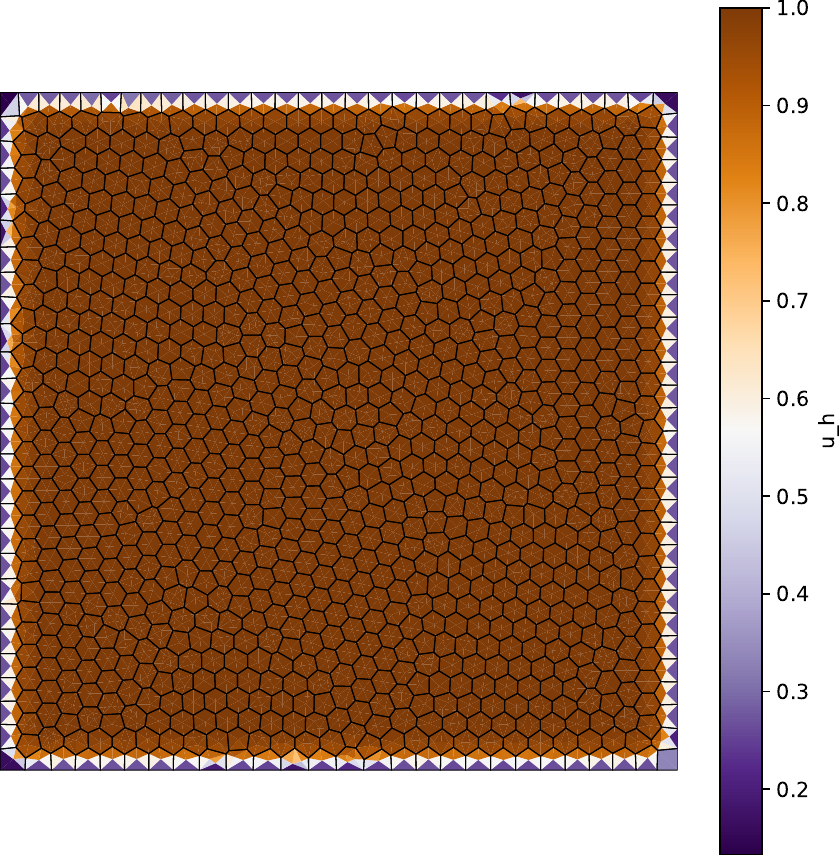}}
  \hspace{0.05\textwidth}
  \subfloat[$\epsilon=10^{-4}$, surface.]{\includegraphics[width=0.35\textwidth]{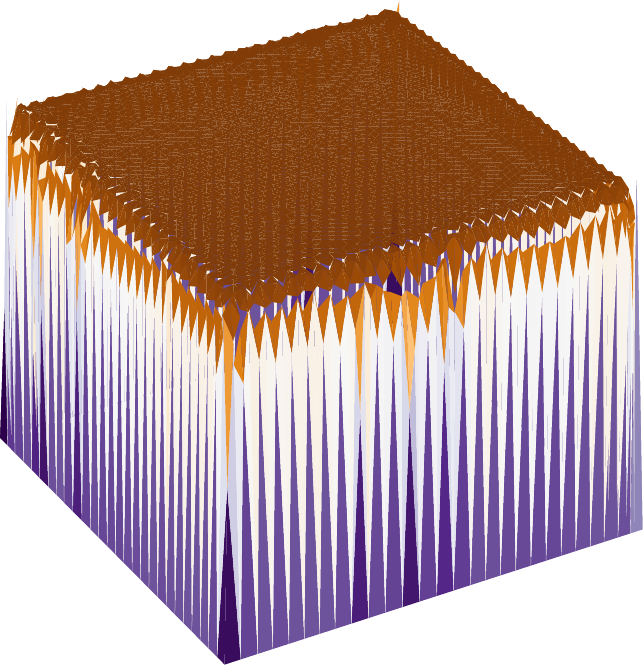}}\\[1ex]
  \subfloat[$\epsilon=10^{-6}$, top-down.]{\includegraphics[width=0.35\textwidth]{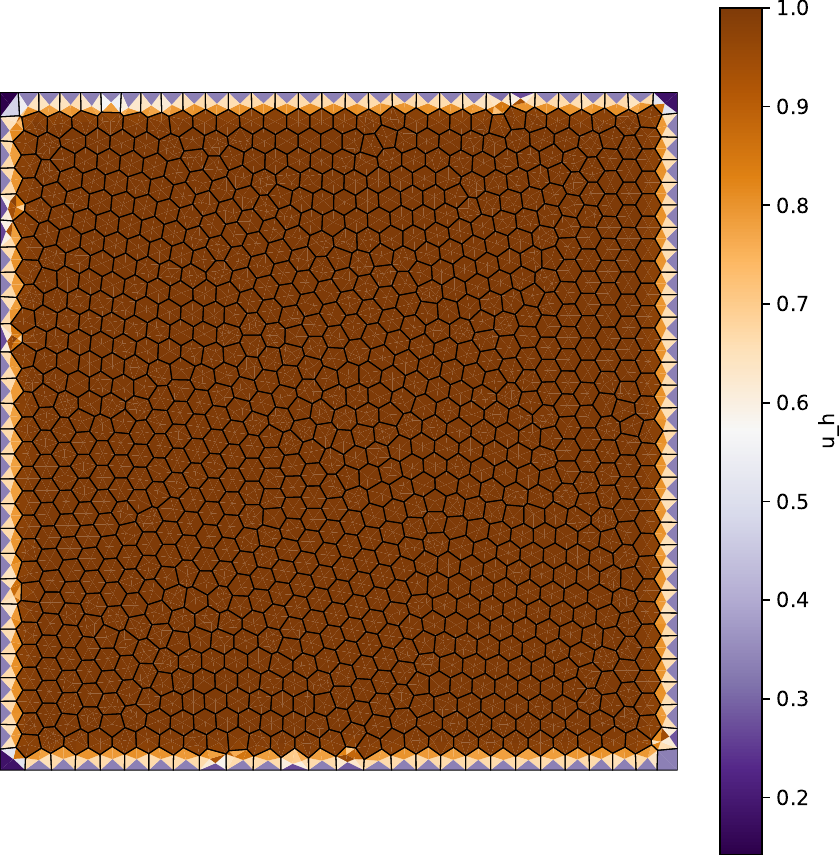}}
  \hspace{0.05\textwidth}
  \subfloat[$\epsilon=10^{-6}$, surface.]{\includegraphics[width=0.35\textwidth]{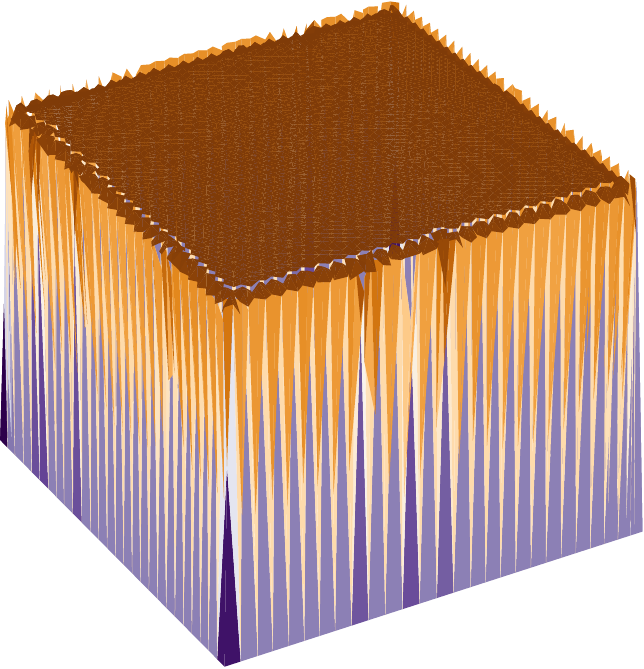}}
  \caption{Approximate solutions $\mathcal{E}^{+}(u_{H})$ for Example~\ref{Example22} using $\mathbb{P}_{2}$ on the finest mesh ($\#\calP=1024$).}
  \label{Fig6}
\end{figure}

\begin{table}[H]
  \centering
  \begin{tabular}{|c|c|c|c|c|c|c|}
    \hline
     & $\epsilon=10^{-7}$ & $10^{-6}$ & $10^{-5}$ & $10^{-4}$ & $10^{-3}$ & $10^{-2}$ \\
    \hline\hline
    $\mathbb{P}_{1}$ & 13 & 14 & 15 & 16 & 6 & 7 \\
    $\mathbb{P}_{2}$ & 14 & 14 & 14 & 20 & 7 & 8 \\
    \hline
  \end{tabular}
  \caption{Newton iterations for Example~\ref{Example22} using $\mathbb{P}_{1}$ and $\mathbb{P}_{2}$ on the finest mesh.}
  \label{Tab7}
\end{table}

\begin{example}[Solution with an interior layer]\label{Example33}
  We consider
  \begin{equation}
    \left\{
    \begin{aligned}
      -\epsilon \Delta u + u &= f && \text{in }\Omega,\\
      u &= 0 && \text{on } \partial\Omega,
    \end{aligned}
    \right.
    \label{Example3}
  \end{equation}
  with piecewise constant forcing
  \[
  f(x,y)=
  \begin{cases}
    \tfrac{1}{2}, & (x,y)\in \big[\tfrac{1}{4},\tfrac{3}{4}\big]^2,\\[2pt]
    1,            & \text{otherwise},
  \end{cases}
  \]
  and $\epsilon \in [10^{-6},10^{-2}]$. In this setting the solution develops an interior layer and attains
  a local minimum inside the domain. Since $u(x)\in [0,1]$, we set $\kappa=1$.
\end{example}

We report results for $\mathbb{P}_{1}$ and $\mathbb{P}_{2}$. Elevation plots on the finest polygonal mesh are shown in Figure~\ref{Fig8}, illustrating the sharpening of the interior layer as $\epsilon$ decreases. Figure~\ref{Fig11} compares cross–sections of the R-BP-FEM solution with those obtained by the standard DG method \eqref{wip} along the line $y=-x$. As $\epsilon\to 0$, the DG solution exhibits oscillations near the layer, whereas the R-BP-FEM removes these undershoots/overshoots while preserving the layer structure. Newton iteration counts for the stopping criterion \eqref{stopping} are summarised in Table~\ref{Tab8}.

\begin{figure}[h!]
  \centering
  \subfloat[$\epsilon=10^{-2}$, top-down.]
    {\includegraphics[width=0.35\textwidth]{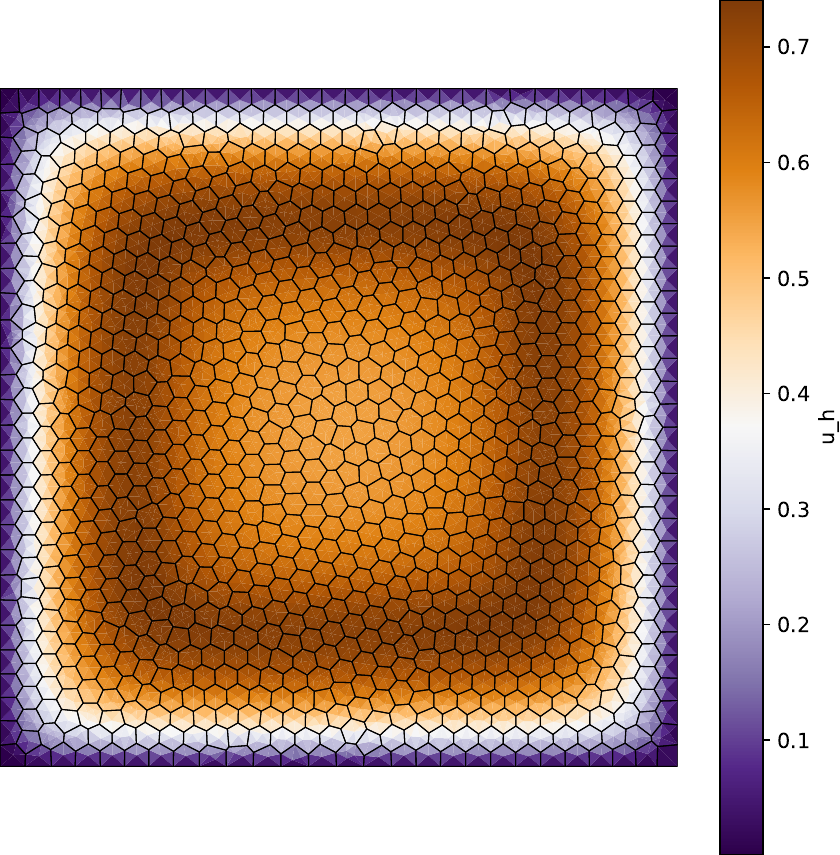}}
  \hspace{0.05\textwidth}
  \subfloat[$\epsilon=10^{-2}$, surface.]
    {\includegraphics[width=0.35\textwidth]{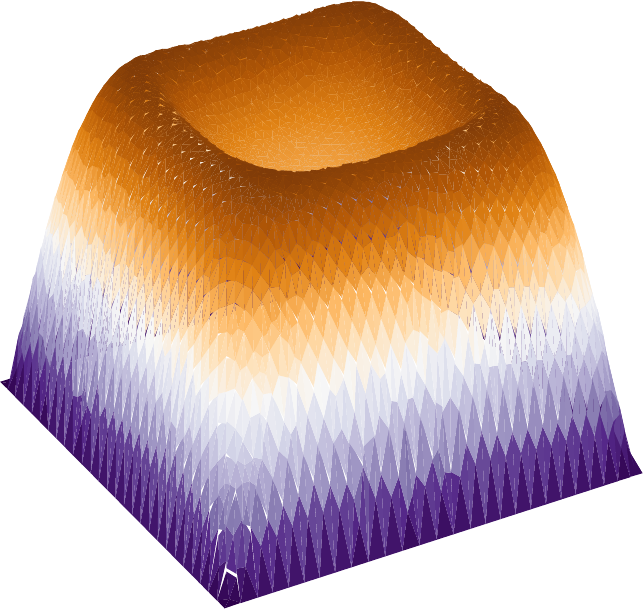}}\\[1ex]
  \subfloat[$\epsilon=10^{-4}$, top-down.]
    {\includegraphics[width=0.35\textwidth]{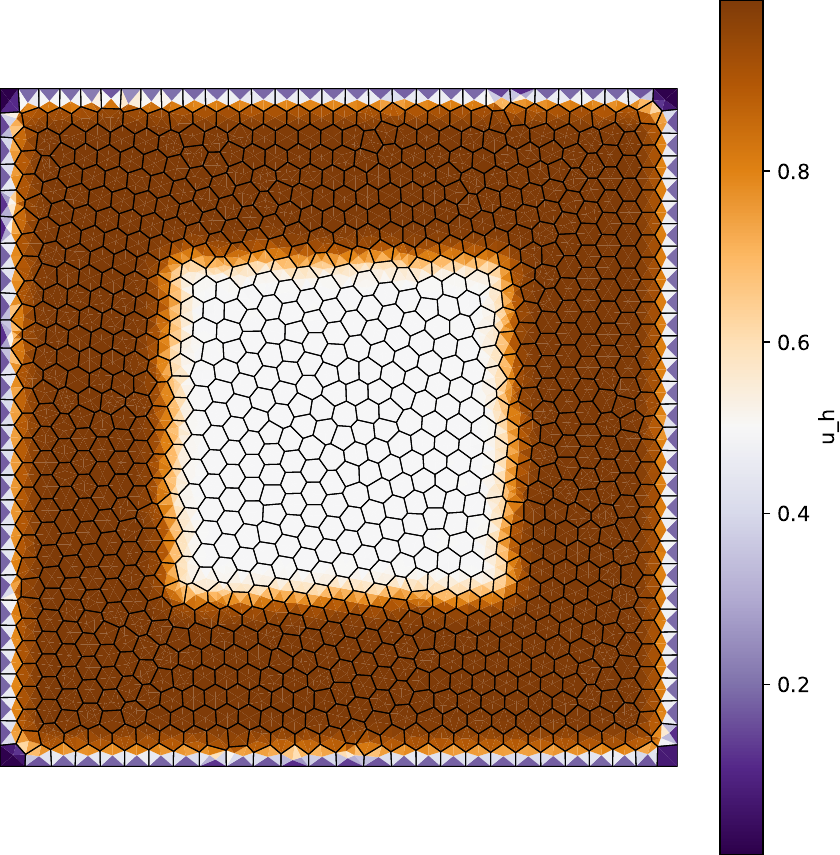}}
  \hspace{0.05\textwidth}
  \subfloat[$\epsilon=10^{-4}$, surface.]
    {\includegraphics[width=0.35\textwidth]{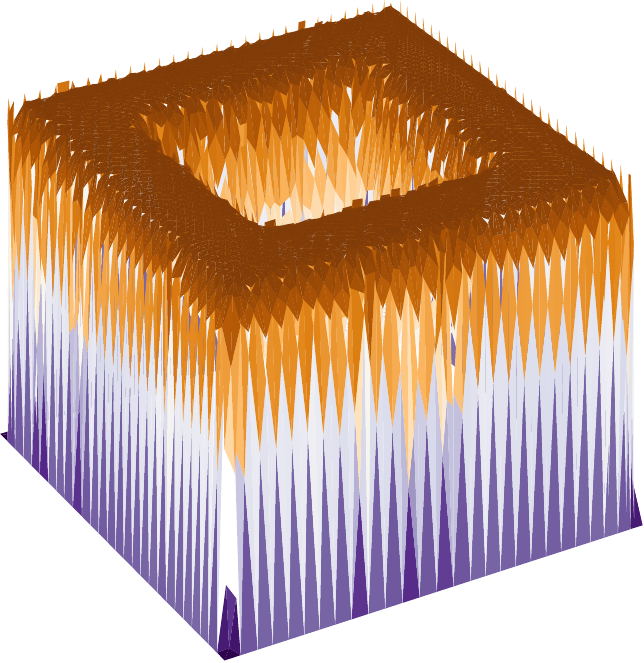}}\\[1ex]
  \subfloat[$\epsilon=10^{-6}$, top-down.]
    {\includegraphics[width=0.35\textwidth]{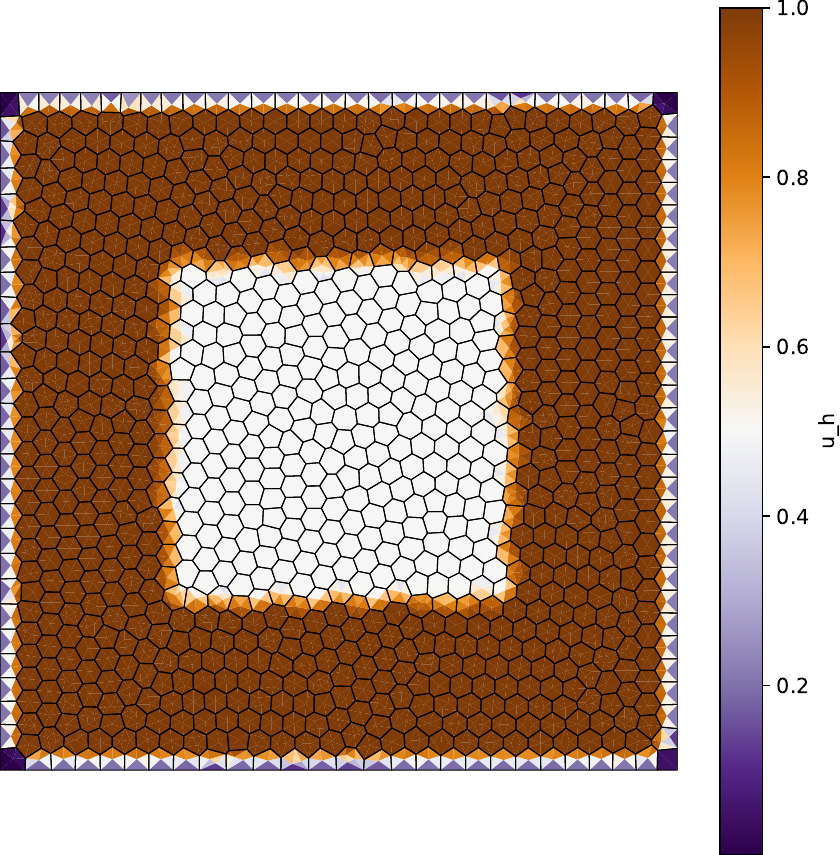}}
  \hspace{0.05\textwidth}
  \subfloat[$\epsilon=10^{-6}$, surface.]
    {\includegraphics[width=0.35\textwidth]{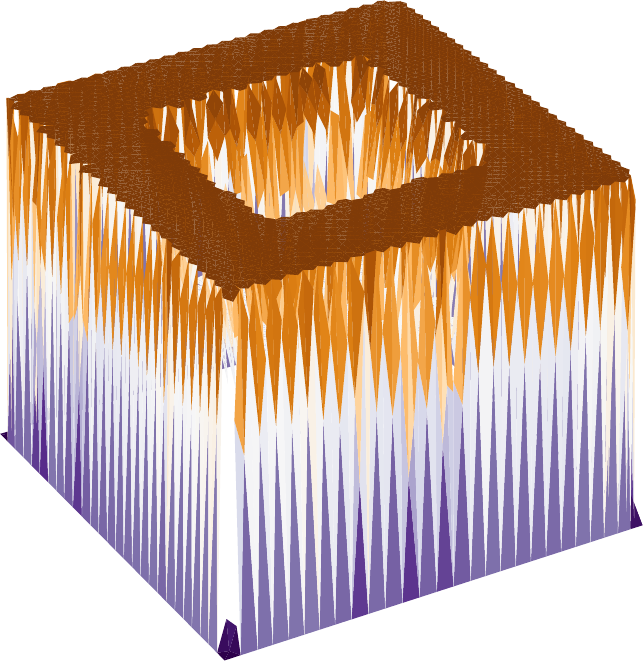}}
  \caption{Approximate solutions $\mathcal{E}^{+}(u_{H})$ for Example~\ref{Example33} using $\mathbb{P}_{1}$ on the finest mesh ($\#\mathcal{P}=1024$).}
  \label{Fig8}
\end{figure}

\begin{figure}[h!]
  \centering
  \subfloat[$\epsilon=10^{-4}$, $\mathbb{P}_{1}$.]{\includegraphics[width=0.45\textwidth]{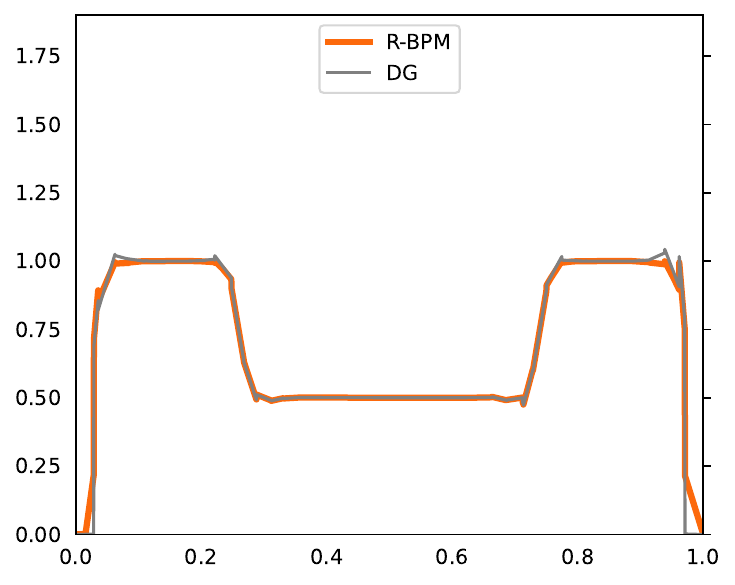}}
  \subfloat[$\epsilon=10^{-7}$, $\mathbb{P}_{1}$.]{\includegraphics[width=0.45\textwidth]{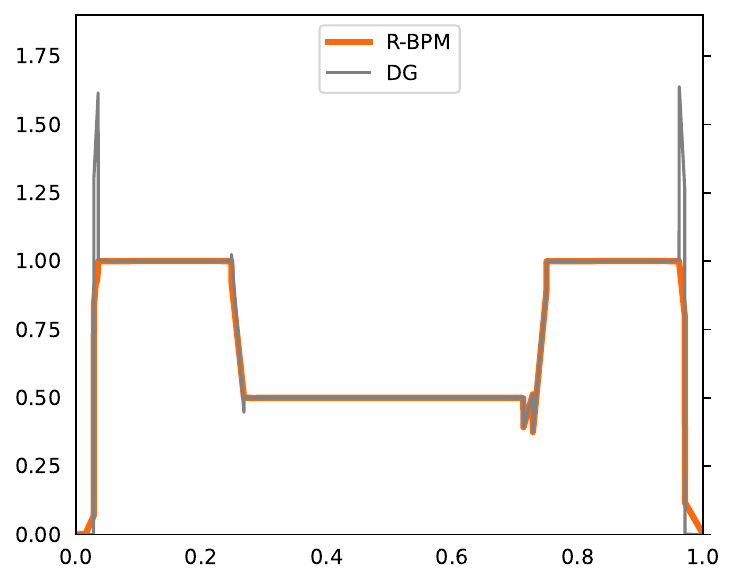}}\\
  \subfloat[$\epsilon=10^{-4}$, $\mathbb{P}_{2}$.]{\includegraphics[width=0.45\textwidth]{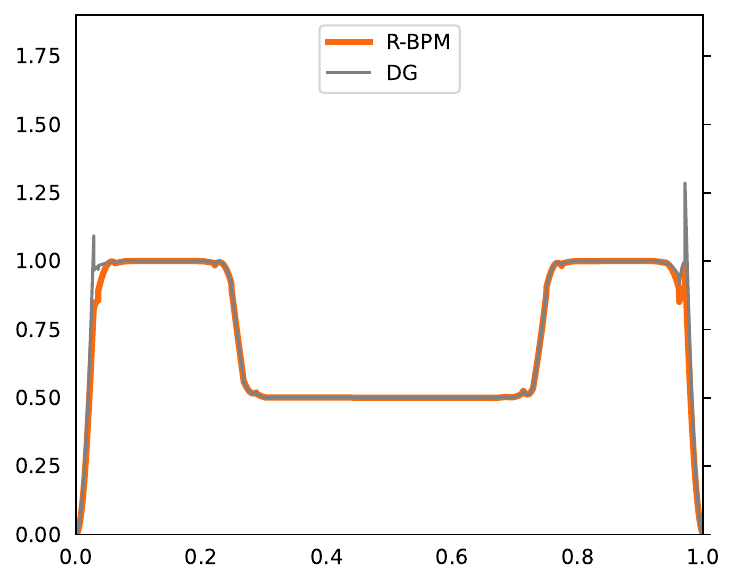}}
  \subfloat[$\epsilon=10^{-7}$, $\mathbb{P}_{2}$.]{\includegraphics[width=0.45\textwidth]{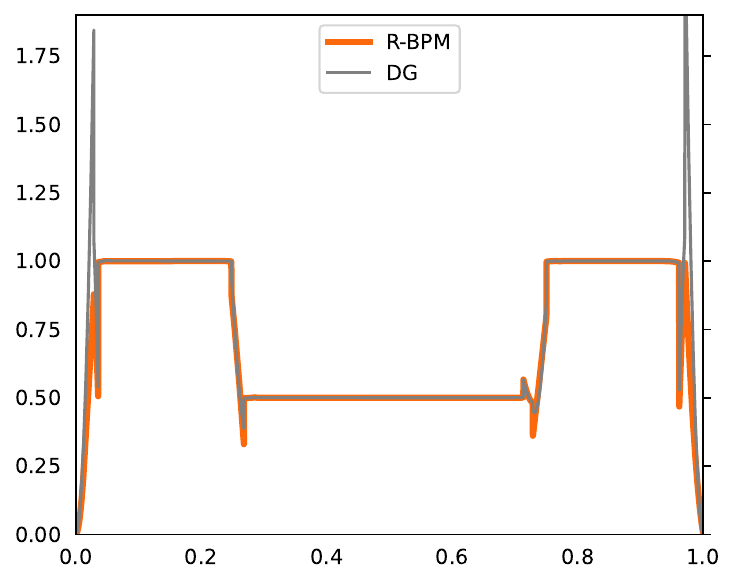}}
  \caption{Cross-sections along $y=-x$ for Example~\ref{Example33}: comparison of R-BP-FEM and DG \eqref{wip} for $\mathbb{P}_{1}$ and $\mathbb{P}_{2}$.}
  \label{Fig11}
\end{figure}

\begin{table}[H]
  \centering
  \begin{tabular}{|c|cccccc|}
    \hline
     & $10^{-7}$ & $10^{-6}$ & $10^{-5}$ & $10^{-4}$ & $10^{-3}$ & $10^{-2}$ \\
    \hline\hline
    $\mathbb{P}_{1}$\ Itr. & 13 & 14 & 14 & 17 & 13 & 7 \\
    $\mathbb{P}_{2}$\ Itr. &  9 &  7 & 21 & 14 & 13 & 9 \\
    \hline
  \end{tabular}
  \caption{Newton iterations for Example~\ref{Example33} on the finest mesh.}
  \label{Tab8}
\end{table}

\section{Conclusion}\label{sec:conclusion}

A bound-preserving discontinuous
Galerkin framework on polytopic meshes is introduced. The construction relies on a
projection-based formulation, in which a robust bilinear form is
combined with nonlinear stabilisation to ensure non-negativity of the
discrete solution without compromising the convergence behaviour of
the underlying DG scheme. The method accommodates general polygonal
meshes, allowing for flexibility in mesh generation and refinement
strategies, while maintaining compatibility with standard polynomial
approximation spaces. A salient feature of the approach is the ability to select at will the cardinality and the position of nodes where range bounds are enforced without affecting the overall complexity of the method.

The theoretical development has been complemented by an extensive
series of numerical experiments. These tests demonstrate optimal
convergence rates for smooth solutions, confirm robustness of the
scheme across a wide range of diffusion parameters and illustrate the
method's ability to resolve both boundary and interior layers without
spurious oscillations, or out-of-range over- or undershoots. Comparisons with a standard DG discretisation
confirm that the proposed method eliminates non-physical behaviour
in challenging singularly perturbed regimes.

\section*{Acknowledgements}

All authors gratefully acknowledge the financial support of The
Leverhulme Trust (grant number RPG-2021-238).  EHG was supported by
EPSRC grant number EP/W005840/2.  TP was supported by the EPSRC
programme grant EP/W026899/2.

\bibliographystyle{siam}
\bibliography{refs.bib}
\end{document}